\documentclass[a4paper, 11pt, reqno]{amsart}

\usepackage{amssymb}
\usepackage{amsthm}
\usepackage{bm}
\usepackage{latexsym}
\usepackage{amsmath}
\usepackage{mathrsfs}
\usepackage{caption}
\usepackage{amscd}
\usepackage[all,cmtip]{xy}
\usepackage[usenames]{color}
\usepackage{graphicx}
\usepackage{color}
\usepackage{amscd}
\usepackage{float}
\usepackage{graphics}
\usepackage{tikz}
\usepackage{tikz-cd}
\usepackage{comment}
\usepackage{xspace}
\usepackage{mathtools}
\usepackage{subfig}
\usepackage{enumerate}
\usepackage{hyperref}

\usepackage{marginnote}

\newcommand{\appsection}[1]{\let\oldthesection\thesection
\renewcommand{\thesection}{Appendix \oldthesection}
\section{#1}\let\thesection\oldthesection}

\DeclareMathOperator{\coeff}{coeff}

  \newtheorem{introthm}{Theorem}

  \newtheorem{introcor}{Corollary}

  \newtheorem{theorem}{Theorem}[section]
  \newtheorem{lemma}[theorem]{Lemma}
  \newtheorem{proposition}[theorem]{Proposition}
  \newtheorem{corollary}[theorem]{Corollary}

  \newtheorem{definition}[theorem]{Definition}
  \newtheorem{example}[theorem]{Example}

\newtheorem{remark}[theorem]{Remark}

\theoremstyle{remark}

\numberwithin{equation}{section}

\calclayout

\title{Fundamental Groups and Coregularity:\\Klt Pairs with Trivial and Nef Log-Anticanonical Class}

\author[L.~Braun]{Lukas Braun}
\address{Institut für Mathematik, Fakultät für Mathematik, Informatik und Physik, Universität Innsbruck, Technikerstraße 13, A-6020 Innsbruck, Austria}
\email{lukas.braun@uibk.ac.at}

\thanks{
During the work on this manuscript, LB was supported by FWF grant 10.55776/PAT9495823. 
}

\author[F.~Figueroa]{Fernando Figueroa}
\address{Department of Mathematics, Northwestern University, Evanston, IL 60208, USA
}
\email{fernando.figueroa@northwestern.edu}

\subjclass[2020]{Primary 14E30, 14F35, Secondary 14J32.}

\newcommand{\reg}{\mathrm{reg}}
\newcommand{\st}{\mathrm{st}}
\newcommand{\orb}{\mathrm{orb}}
\newcommand{\qtilde}{\widetilde q}
\newcommand{\bM}{\mathbf M}
\newcommand{\bN}{\mathbf N}
\newcommand{\Gam}{\pi_1^{\reg}}
\newcommand{\Supp}{\operatorname{Supp}}

\newcommand{\rank}{\operatorname{rank}}
\newcommand{\num}{\equiv}
\date{October 14, 2026} 
\hypersetup{hidelinks}
\begin{document}

\begin{abstract}
In this article, we study how the absolute coregularity of a projective log pair
reflects on its fundamental group. More precisely, we show that for a projective
klt log pair $(X,D)$ of absolute coregularity $c$ and arbitrary dimension
the fundamental group $\pi_1^{\rm reg}(X,D)$ admits a normal abelian subgroup
of finite index and rank at most $2c$.
We reduce this statement to klt Calabi-Yau pairs with standard coefficients
of dimension $c$. In the cases $0\le c\le2$ and fixed dimension, we can
furthermore bound the index of a solvable normal subgroup.
Combining the reduction with the recent regular-locus theorem of
Fu-Guo-Song-Wang, we prove the coregularity bound in every dimension.
In particular, for a projective klt pair $(X,D)$ with $-(K_X+D)$ nef,
the orbifold fundamental group of the smooth locus is virtually abelian of rank at most
$2\dim X$.
\end{abstract}

\maketitle
\setcounter{tocdepth}{1}
\tableofcontents
\section{Introduction}
In this article, we investigate the connections between two important invariants of projective varieties (or pairs): the \textit{fundamental group} and the \textit{coregularity}.

The \textit{fundamental group} as a topological invariant gains its importance from viewing smooth projective varieties as Kähler manifolds. As such, it is long known that Fano manifolds are simply connected and projective manifolds with trivial or antinef canonical divisor have almost abelian fundamental group; see~\cite{Pau97} for the latter case. During the last few years, many of these statements have been generalized to orbifolds and to varieties with Kawamata log terminal (klt) singularities, with respect to the \textit{orbifold fundamental group} and/or the \textit{fundamental group of the smooth locus}. While proofs in the orbifold case mostly mimic the differential-geometric proofs from the smooth case using orbifold charts, in the klt case, often new techniques and insights are needed. In particular, the transfer from the fundamental group of the whole space to the one of the smooth locus often requires the local finiteness result from~\cite{Bra21}, which simultaneously proved the finiteness of the orbifold fundamental group of the smooth locus of klt log Fano pairs.

In the present article, we combine a reduction to standard-coefficient Calabi-Yau pairs with
the regular-locus theorem of Fu-Guo-Song-Wang~\cite{FGSW26}.
This gives virtual abelianity of the fundamental group for projective klt pairs with a
generalized lc Calabi-Yau completion, including all projective
klt pairs with nef log anticanonical divisor.

We write
\[
 \Gam(X,D)=\pi_1^{\orb}(X_{\reg},D_{\st}|_{X_{\reg}}),
\]
where $D_{\st}$ is the standard approximation of $D$, and call this group the regional fundamental group of the pair $(X,D)$.
Throughout, boundaries and nef b-divisors are rational, and varieties are connected and normal over $\mathbb C$.
A klt pair $(X,D)$ requires $K_X+D$ to be $\mathbb Q$-Cartier.
For a finitely generated virtually abelian group, its \emph{virtual rank}
is the integer $r$ for which it has a finite-index subgroup isomorphic
to $\mathbb Z^r$. This integer is independent of the subgroup chosen.

\subsection{Coregularity and fundamental groups}
The coregularity measures the difference between the dimension of
a variety and the dimension of the dual complex of a generalized lc
Calabi-Yau structure; see~\cite{FMP22,Mor22}. In this paper, absolute coregularity minimizes
this invariant over generalized lc Calabi-Yau completions with
larger boundary and nef part.
Recall that a completion consists of $B\ge D$ and a b-nef,
b-$\mathbb Q$-Cartier divisor $\bM$ such that $(X,B,\bM)$ is
generalized lc and $K_X+B+\bM_X\sim_{\mathbb Q}0$.

A klt Calabi-Yau pair means a klt pair
with $K_X+D\sim_{\mathbb Q}0$. For such pairs, the absolute coregularity equals
their dimension. With the nef part held fixed, this follows from
\cite[Remark~2.26]{FMP22}; Lemma~\ref{lem:cy-coreg} proves the assertion
for the convention used here, which also allows the nef part to vary.
Our central reduction to pairs of dimension equal to the coregularity is the following:

\begin{introthm}\label{introthm-induction-coreg}
Fix an integer $c\ge0$. Assume that for every projective klt Calabi-Yau pair
of dimension $c$ with standard coefficients the regional orbifold
fundamental group is virtually abelian of rank at most $2c$.
Then every projective klt pair $(X,D)$ of absolute coregularity
$a\le c$ has virtually abelian regional orbifold fundamental group
of rank at most $2a$.
\end{introthm}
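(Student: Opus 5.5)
We reduce the statement to the hypothesis by running the standard coregularity machinery: pass to a dlt modification of a completion, take a minimal log canonical centre of dimension $a$, and use adjunction to obtain a klt Calabi--Yau pair of dimension $a$ with standard coefficients. We then transport the fundamental group information back up through a fibration and a surjection argument.

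\emph{Step 1: completion and modification.} Let $(X,D)$ be projective klt of absolute coregularity $a\le c$. Choose a completion $(X,B,\bM)$ with $B\ge D$, generalized lc, $K_X+B+\bM_X\sim_{\mathbb Q}0$, realizing coregularity $a$. We may replace $D$ by $D_{\st}$, since $\Gam$ only sees $D_{\st}$ and $D_{\st}\le D\le B$.

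Take a generalized dlt modification $\phi\colon (Y,B_Y,\bM)\to (X,B,\bM)$. Since $(X,D)$ is klt, it is isomorphic over a big open subset of $X_{\reg}$ away from the non-klt locus of $(X,B)$. The key comparison is a surjection
\[
\Gam(Y,\Delta_Y)\twoheadrightarrow \Gam(X,D)
\]
for a suitable klt boundary $\Delta_Y$ with standard coefficients. The route we would take:
\begin{enumerate}
\item Perturb $B$ to $D+\epsilon(B-D)$ to stay klt.
\item Run an MMP that contracts the exceptional divisors with small discrepancy.
\item Use that exceptional divisors of a klt pair over $X$ map to codimension $\ge2$ loci. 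Removing codimension-two subsets does not change $\pi_1^{\orb}$, and by~\cite{Bra21} local fundamental groups are finite, so restricting from $Y$ to $X_{\reg}$ gives a surjection.
\end{enumerate}

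\emph{Step 2: reduction to a Mori fibre / lc centre.} Run a $(K_Y+\Delta_Y)$-MMP, which is equivalent to a $-(B_Y-\Delta_Y+\bM)$-MMP. It terminates in a Mori fibre space $Y'\to Z$ whose general fibre meets $\lfloor B\rfloor$ horizontally. MMP steps only change $\pi_1$ by finite quotients and surjections (again via~\cite{Bra21} and the codimension-two argument).

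For a Mori fibre space $f\colon Y'\to Z$ with klt fibres that are log Fano, the fibres have finite regional fundamental group~\cite{Bra21}. The generalized canonical bundle formula then puts a generalized klt structure $(Z,\Delta_Z,\bN)$ on $Z$ with standard-coefficient orbifold part, so that $\Gam(Y')$ is an extension of a quotient of $\Gam(Z,\Delta_Z)$ by a finite group. Meanwhile, the coregularity of $Z$ is at most $a$. By induction on dimension we reach the case where the dual complex is maximal, i.e. a component $S$ of $\lfloor B\rfloor$ of dimension reaching $a$. Adjunction $(K+B+\bM)|_S$ gives a generalized dlt Calabi--Yau pair, and the minimal lc centre $W$ has dimension $a$ with a klt Calabi--Yau structure.

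Standard coefficients of the different and Lemma~\ref{lem:cy-coreg} are used here. We also need to absorb the nef part $\bM$ into a boundary; we would attempt this via a $\mathbb Q$-linear general member of $\bM$ of high multiple, keeping coefficients standard.

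\emph{Step 3: relating $\Gam(X,D)$ to $\Gam(W)$.} This is the main obstacle. The strategy is to show that a tubular neighbourhood of the minimal lc centre controls $\pi_1$:
\begin{itemize}
\item By connectedness of the non-klt locus and the dlt structure, the dual complex is contracted onto $W$.
\item $\pi_1$ of the complement of the strata retracts, up to finite groups, to an extension of $\Gam(W)$ by the abelian group generated by loops around the $\dim X - a$ boundary components through $W$.
\item These loops have finite order, because the pair $(X,D)$ is klt, so the coefficients in $D$ give orbifold torsion, and the components are not in $D$ with coefficient one.
\end{itemize}
Hence $\Gam(X,D)$ is, up to finite kernels and finite-index subgroups, a quotient of $\Gam(W,\Delta_W)$, which by hypothesis is virtually $\mathbb Z^{r}$ with $r\le 2a$.

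Finally, a quotient of a virtually abelian group is virtually abelian of no larger rank, and an extension of a virtually-$\mathbb Z^r$ group by a finite group is again virtually $\mathbb Z^{r}$ (pass to the centralizer of the finite kernel). This gives virtual rank at most $2a$.
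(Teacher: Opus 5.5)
Your Step~3 is where the argument breaks, and it carries essentially all of the content. What you describe there is a punctured tubular neighbourhood of the minimal lc centre $W$, whose $\pi_1$ is an extension of $\Gam(W,\Delta_W)$ by finite-order loops around the $\dim X-a$ boundary components through $W$. That is at best a description of the \emph{local} group near $W$. To conclude anything about $\Gam(X,D)$ you would need the map from the orbifold $\pi_1$ of that neighbourhood to $\pi_1(X_{\reg}\setminus\Supp D_{\st})$, modulo the orbifold relations, to be surjective. Nothing you invoke gives this. Connectedness of the non-klt locus and statements about the dual complex concern the lc locus, not loops far away from $W$. Moreover, $W$ is in general neither ample nor of codimension one, so no Lefschetz-type argument is available. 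For $a=0$, your Step~3 claim says precisely that $\Gam(X,D)$ is finite; that \emph{is} the theorem in that case, so it cannot serve as an intermediate lemma.

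Even granting Step~3, the hypothesis does not apply to $W$. Adjunction only yields a generalized klt Calabi--Yau pair $(W,B_W,\bM_W)$. Its boundary need not have standard coefficients, and replacing it by $(B_W)_{\st}$ destroys the Calabi--Yau condition. If $\bM_W\not\equiv0$, then $K_W+B_W$ is not even pseudo-effective. A nef class cannot in general be absorbed into a standard-coefficient boundary: it need not be semiample, and a general member of $|m\bM_W|$ enters with coefficient $1/m$, which is standard only for $m\le2$. There are two further gaps. First, the hypothesis is stated only in dimension $c$, so using it in dimension $a<c$ needs an extra step. Second, your Step~2 claim that the MMP ends in a Mori fibre space fails exactly when $K+D_{\st}$ is pseudo-effective, which is where the Calabi--Yau input must enter.

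The paper avoids lc centres entirely and simply completes the Mori-fibre-space induction you begin in Step~2, by induction on $\dim X$. If $K_X+D_{\st}$ is pseudo-effective (after a small $\mathbb Q$-factorialization), Lemma~\ref{lem:cy-coreg} shows that $(X,D_{\st})$ is klt Calabi--Yau with $a=\dim X$. The hypothesis then applies after transferring it from dimension $c$ to dimension $a$ via products with $E^{c-a}$, $E$ an elliptic curve. Otherwise one obtains a Mori fibre space and proceeds as follows:
\begin{enumerate}
\item take a gdlt model of the base and a compatible Fano type fibration (Lemma~\ref{lem-dlt-mod-Fano-fibration});
\item give the extracted lc places coefficients $1-1/q_i$ (Lemma~\ref{lem-dlt-group});
\item show the general fibre has finite group (Lemma~\ref{lem-Fano-extraction});
\item show the gcd boundary on the base is klt with a completion of coregularity at most $a$ (Lemma~\ref{lem:discriminant}, Remark~\ref{rem:coreg-facts});
\item apply induction to the base and conclude with Lemmas~\ref{lem-fibration} and~\ref{lem-virtual-groups}.
\end{enumerate}
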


In recent work, Fu-Guo-Song-Wang prove that for a projective klt variety $V$
with $K_V\num0$, the group $\pi_1(V_{\reg})$ is virtually abelian
of rank $2\qtilde(V)$~\cite[Theorem~1.3]{FGSW26}.
Here the augmented irregularity $\qtilde(V)$ is the supremum of
$q(V')=h^1(V',\mathcal O_{V'})$ over connected finite quasi-\'etale
covers $V'\to V$.
The log index-one cover from~\cite[Proposition~12]{CGG24} proves the
standard-coefficient Calabi-Yau hypothesis of Theorem~\ref{introthm-induction-coreg}
in every dimension; see Proposition~\ref{prop:cy-cover}.
We therefore obtain the following all-dimensional conclusion.
\begin{introthm}\label{intro:coreg-all}\label{intro:completion} 
Let $(X,D)$ be a projective klt pair admitting a generalized lc
Calabi-Yau completion. Put
$c=\widehat{\operatorname{coreg}}(X,D)$.
Then $\Gam(X,D)$ has a normal free abelian subgroup of finite
index and rank at most $2c$.
\end{introthm}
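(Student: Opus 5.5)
The plan is to deduce Theorem~\ref{intro:coreg-all} from Theorem~\ref{introthm-induction-coreg}. Since $(X,D)$ admits a generalized lc Calabi-Yau completion, the absolute coregularity $c=\widehat{\operatorname{coreg}}(X,D)$ is a well-defined integer, and $0\le c\le\dim X$. I will apply Theorem~\ref{introthm-induction-coreg} with this value of $c$. This requires checking its hypothesis: for every projective klt Calabi-Yau pair $(Y,\Delta)$ of dimension $c$ with standard coefficients, $\Gam(Y,\Delta)$ should be virtually abelian of rank at most $2c$. Granting that, Theorem~\ref{introthm-induction-coreg} shows that $\Gam(X,D)$ is virtually abelian of rank at most $2c$.

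To verify the hypothesis I will use Proposition~\ref{prop:cy-cover}, which is built on the log index-one cover of \cite[Proposition~12]{CGG24}. Let $(Y,\Delta)$ be a klt Calabi-Yau pair with standard coefficients. The cover is a finite Galois morphism $V\to Y$, branched with exactly the orbifold indices along $\Delta$ and quasi-\'etale elsewhere. Here $V$ is projective klt with $K_V\sim_{\mathbb Q}0$, hence $K_V\num0$. The resulting orbifold-quasi-\'etale covering gives an exact sequence
\[
1\to \pi_1(V_{\reg}')\to\Gam(Y,\Delta)\to G\to1
\]
with $G$ finite. Here $V'_{\reg}$ is the preimage of $Y_{\reg}$, whose complement in $V_{\reg}$ has codimension at least two, so its fundamental group is a quotient of, and in fact equal to, $\pi_1(V_{\reg})$ up to finite index. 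By \cite[Theorem~1.3]{FGSW26}, $\pi_1(V_{\reg})$ is virtually abelian of rank $2\qtilde(V)$. The Hodge-theoretic bound $\qtilde(V)\le\dim V=c$ then gives rank at most $2c$, and finite extensions preserve virtual abelianity and virtual rank. I expect the main obstacle here to be the identification of the regional group upstairs with $\pi_1(V_{\reg})$. This needs purity of the branch locus in codimension one and the fact that removing codimension $\ge2$ loci from a normal variety's smooth locus does not change $\pi_1$. I would cite Proposition~\ref{prop:cy-cover} for exactly this.

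It remains to upgrade "virtually abelian of virtual rank $r\le 2c$" to "has a normal free abelian subgroup of finite index and rank $r$". The group $\Gam(X,D)$ is finitely generated, being the fundamental group of a quasi-projective orbifold. Take a finite-index subgroup $A\cong\mathbb Z^r$ and replace it by its normal core $N=\bigcap_g gAg^{-1}$. This is a finite intersection because $A$ has finite index, so $N$ is normal of finite index in $\Gam(X,D)$. Being a finite-index subgroup of $\mathbb Z^r$, $N$ is free abelian of the same rank $r\le 2c$. This finishes the proof.
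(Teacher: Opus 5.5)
Your proposal is correct and is the paper's proof: verify the hypothesis of Theorem~\ref{thm-induction-coreg} in every dimension using the log index-one cover of Proposition~\ref{prop:cy-cover} together with \cite[Theorem~1.3]{FGSW26}, apply the reduction, and pass to the normal core of a finite-index $\mathbb Z^r$. Two details in your sketch of the cover are imprecise, though harmless since you defer to Proposition~\ref{prop:cy-cover}. First, the preimage of $Y_{\reg}$ need not lie in $V_{\reg}$; for example, $t^2=xy$ is singular over the smooth point where two branch components meet, so one first discards the codimension-two locus where the branch divisor is singular. Second, $\qtilde(V)\le\dim V$ is not a general Hodge-theoretic bound but uses $K_V\num0$, for instance via surjectivity of the Albanese map \cite[Theorem~4.8]{Amb05}.
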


In particular, pairs with absolute coregularity zero have a finite regional
fundamental group.

\subsection{Calabi-Yau pairs and nef log anticanonical divisors}
As a direct consequence for Calabi-Yau pairs, we obtain the following:
\begin{introthm}\label{intro:cy-all}\label{intro:cy-standard}
Let $(X,D)$ be a projective klt pair of dimension $n$ with
$K_X+D\num0$.  Let $s$ be the smallest positive integer such that
$s(K_X+D)\sim0$. Then $\Gam(X,D)$ is virtually abelian of rank
at most $2n$. In particular:
\begin{enumerate}
	\item If $D$ has standard coefficients, there is a connected
	finite cyclic log index-one cover $p:Y\to X$ of degree $s$, with $Y$ canonical
	and $K_Y\sim0$, and an exact sequence
	\[
	1\longrightarrow\pi_1(Y_{\reg})\longrightarrow\Gam(X,D)
	\longrightarrow\mathbb Z/s\mathbb Z\longrightarrow1.
	\]
	In this case the virtual rank is exactly $2\qtilde(Y)$. 
	\item If $D=0$, the virtual rank is exactly $2\qtilde(X)$.
\end{enumerate}

\end{introthm}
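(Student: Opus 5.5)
Here is the plan for Theorem~\ref{intro:cy-all}. The first step reduces to standard coefficients. We replace $D$ by its standard approximation $D_{\st}$. By definition $\Gam(X,D)=\Gam(X,D_{\st})$, but $K_X+D_{\st}$ need not be numerically trivial, so we cannot directly use the standard-coefficient case. For the general rank bound $2n$ we instead use Theorem~\ref{intro:completion}. The pair $(X,D)$ is its own generalized lc Calabi-Yau completion with $B=D$ and $\bM=0$, because $K_X+D\num0$ on a klt pair implies $K_X+D\sim_{\mathbb Q}0$ (abundance for numerically trivial klt pairs, Nakayama/Kawamata). Then $\widehat{\operatorname{coreg}}(X,D)\le n$, and Theorem~\ref{intro:completion} gives a normal free abelian subgroup of finite index and rank at most $2n$. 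This settles the first assertion. It also shows that $s$ is well defined.

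For (1), assume $D$ has standard coefficients, so $D=D_{\st}$. We take the log index-one cover of Proposition~\ref{prop:cy-cover} (from \cite[Proposition~12]{CGG24}). This is a cyclic cover $p\colon Y\to X$ of degree $s$ associated to the torsion divisor $s(K_X+D)\sim0$, branched along $D$ with ramification indices matching the coefficients $1-1/m$. In other words, $p$ is an orbifold-\'etale Galois cover of $(X_{\reg},D|_{X_{\reg}})$ with group $\mathbb Z/s\mathbb Z$, and $K_Y=p^*(K_X+D)\sim0$. Minimality of $s$ gives connectedness, and klt plus $K_Y$ Cartier gives canonical singularities.

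The key point is that $p^{-1}(X_{\reg})$ agrees with $Y_{\reg}$ up to a closed subset of codimension $\ge2$. Over the generic points of components of $D$ the cover is smooth, and points of $Y_{\reg}$ over $X_{\sing}$ form a set of codimension $\ge2$. Removing such a subset does not change $\pi_1$ of a smooth variety. The orbifold Galois correspondence then yields the exact sequence $1\to\pi_1(Y_{\reg})\to\Gam(X,D)\to\mathbb Z/s\mathbb Z\to1$. Since $\pi_1(Y_{\reg})$ has finite index, the two groups have the same virtual rank. By \cite[Theorem~1.3]{FGSW26} applied to $Y$ (which has $K_Y\num0$), this virtual rank equals $2\qtilde(Y)$.

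Part (2) is the case $D=0$. Then \cite[Theorem~1.3]{FGSW26} applies directly to $X$, because $K_X\num0$ and $\Gam(X,0)=\pi_1(X_{\reg})$. This gives the virtual rank $2\qtilde(X)$.

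I expect the main obstacle to be the codimension comparison in (1). One must check that the orbifold structure of $(X_{\reg},D)$ pulls back exactly to $Y_{\reg}$ minus a codimension-$\ge2$ set. This requires the ramification of the index-one cover along each $D_i$ to be exactly $m_i$, which is where standard coefficients are used. I would handle this locally in codimension one, using the explicit cyclic cover construction.
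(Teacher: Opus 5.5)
Your proposal is correct and follows essentially the same route as the paper. The paper first gets $K_X+D\sim_{\mathbb Q}0$ from abundance for numerically trivial pairs (it cites \cite{Gon13}), then feeds the trivial completion $(X,D,0)$ into Theorem~\ref{intro:completion} for the $2n$ bound, takes the log index-one cover of Proposition~\ref{prop:cy-cover} for part (1), and applies \cite[Theorem~1.3]{FGSW26} directly for part (2).
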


On the other hand, we arrive at the following consequence for pairs with nef log-anticanonical class:
\begin{introthm}\label{intro:nef}
Let $(X,D)$ be a projective klt pair of dimension $n$ such that
$-(K_X+D)$ is nef. Then $\Gam(X,D)$ is virtually abelian of
rank at most $2\widehat{\operatorname{coreg}}(X,D) \leq 2n$.
\end{introthm}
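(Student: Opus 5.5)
The plan is to deduce Theorem~\ref{intro:nef} from Theorem~\ref{intro:completion}. For that I need two things: that $(X,D)$ admits a generalized lc Calabi-Yau completion, and that $\widehat{\operatorname{coreg}}(X,D)\le n$.

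\textbf{Step 1: the completion.} Take $B=D$, and let $\bM$ be the b-divisor obtained by pulling back $-(K_X+D)$. Concretely, $\bM$ is the b-$\mathbb Q$-Cartier closure of the $\mathbb Q$-Cartier divisor $-(K_X+D)$, so on every birational model $X'\to X$ its trace is the pullback of $-(K_X+D)$.
\begin{itemize}
\item Since $-(K_X+D)$ is nef and pullbacks of nef divisors are nef, $\bM$ is b-nef and b-$\mathbb Q$-Cartier.
\item By construction $K_X+D+\bM_X=0$, so in particular $K_X+D+\bM_X\sim_{\mathbb Q}0$.
\item Generalized log discrepancies of $(X,D,\bM)$ agree with the usual log discrepancies of $(X,D)$, because $\bM$ descends to $X$. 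Hence $(X,D,\bM)$ is generalized klt, and in particular generalized lc.
\end{itemize}
So $(D,\bM)$ is a generalized lc Calabi-Yau completion with $B=D\ge D$.

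\textbf{Step 2: the bound $\widehat{\operatorname{coreg}}(X,D)\le n$.} The coregularity of any generalized lc Calabi-Yau structure is $n-1-\dim\mathcal D$, where $\mathcal D$ is the dual complex of its generalized lc places. For the completion of Step 1 there are no lc places, so the dual complex is empty, $\dim\mathcal D=-1$, and the coregularity equals $n$. Coregularity is never larger than $n$ in any case. Since $\widehat{\operatorname{coreg}}$ is a minimum over completions, we get $\widehat{\operatorname{coreg}}(X,D)\le n$.

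\textbf{Step 3: conclusion.} Applying Theorem~\ref{intro:completion} with $c=\widehat{\operatorname{coreg}}(X,D)$, the group $\Gam(X,D)$ has a finite-index normal free abelian subgroup of rank at most $2c\le 2n$. A group with a finite-index subgroup isomorphic to $\mathbb Z^r$ is virtually abelian of virtual rank $r$, so this is exactly the statement of Theorem~\ref{intro:nef}.

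The main obstacle I expect is a definitional one in Step 1. I need to check that the paper's notion of completion allows $\bM$ to be the trace of a nef $\mathbb Q$-Cartier divisor that is merely $\mathbb Q$-linearly equivalent to the required class. Some conventions require $\bM$ to be nef over a base or to have a specific linear equivalence class; if so, I would replace $-(K_X+D)$ by its pullback to a log resolution and take the b-divisor it induces. I should also confirm that the dimension convention for the empty dual complex gives coregularity $n$ rather than $n+1$ in the paper's normalization. Lemma~\ref{lem:cy-coreg} handles exactly this normalization in the Calabi-Yau case, and I would cite it or repeat its argument.
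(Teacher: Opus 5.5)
Your proposal is correct and is essentially the paper's proof. The paper likewise takes $B=D$ and $\bM=\overline{-(K_X+D)}$, observes that this is a generalized klt Calabi-Yau completion because the nef part descends to $X$, and then applies Theorem~\ref{thm:coreg-all} together with $\widehat{\operatorname{coreg}}(X,D)\le\dim X$; your normalization worry is resolved by the paper's convention that the empty dual complex has dimension $-1$, so this completion has coregularity exactly $n$.
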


These results confirm \cite[Conjecture~1.1]{FGSW26nef} for projective
varieties $X$ with klt singularities and rational boundary $\Delta$,
even when $(X,\Delta)$ is only log canonical. Indeed, if
$-(K_X+\Delta)$ is nef, then
$(X,\Delta,\overline{-(K_X+\Delta)})$ is a generalized lc
Calabi-Yau completion of $(X,0)$, so
Theorem~\ref{intro:coreg-all} applies to
$\Gam(X,0)=\pi_1(X_{\reg})$.
When the pair is klt, Theorem~\ref{intro:nef} gives the stronger
orbifold conclusion. We note that virtual abelianity can fail
for orbifold fundamental groups of lc Calabi-Yau pairs;
see~\cite[Example~10.2]{GLM23}.

For $D=0$, the surjection
$\pi_1(X_{\reg})\twoheadrightarrow\pi_1(X)$ shows that
Theorem~\ref{intro:nef} recovers the projective boundary-free cases
of~\cite[Corollary~1.1]{FGSW26nef}, which concerns $\pi_1(X)$,
and proves virtual abelianity of the fundamental group of the smooth locus in every
dimension. For the fundamental group of the whole space, the all-dimensional compact
K\"ahler result is already established by Guo-Song-Sturm
\cite[Theorem~1.3]{GSS26}, using the method of~\cite{FGSW26nef}.
Our deduction for the fundamental group of the smooth locus uses the Calabi-Yau theorem
\cite[Theorem~1.3]{FGSW26} through the coregularity reduction.

\subsection{Effective bounds}
The following effective result concerns solvable subgroups:
\begin{introthm}\label{introthm-solv012}
    There are constants $i(c,d)$, depending only on the dimension and coregularity, such that for any projective klt pair $(X,D)$ of dimension $d\ge1$ and absolute coregularity $0 \leq c \leq 2$, the fundamental group $\pi_1^{\rm reg}(X,D)$ has a normal solvable subgroup of derived length at most $2d-1$ and index at most $i(c,d)$.
\end{introthm}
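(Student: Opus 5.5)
We prove Theorem~\ref{introthm-solv012} by running the same coregularity reduction as for Theorem~\ref{introthm-induction-coreg}, but keeping track of indices instead of only ranks. Given $(X,D)$ of dimension $d$ and absolute coregularity $c\le2$, we choose a generalized lc Calabi-Yau completion $(X,B,\bM)$ computing $c$. Using bounded complements for generalized pairs of bounded coregularity (coefficients of $B$ can be taken in a finite set depending only on $d$ and $c$, with bounded index $N(c,d)$), we pass to a dlt modification and restrict to a minimal lc center $W$ of dimension $c$. Iterated adjunction along the strata of the dual complex then yields a klt Calabi-Yau pair $(W,B_W)$ with standard coefficients, and the coefficients lie in a finite set determined by $N(c,d)$.

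Next we compare fundamental groups. We expect the reduction step to produce a filtration of $\Gam(X,D)$ of length at most $d-c$: each step arises from a fibration or a tubular-neighbourhood/adjunction argument along a divisorial lc center. At each step the kernel or image is, up to a finite group of bounded order, either cyclic (coming from loops around the boundary, i.e.\ a $\mathbb Z$ or finite cyclic piece) or the regional group of a lower-dimensional pair. Local finiteness from \cite{Bra21}, applied uniformly via the boundedness of the coefficients, should control the finite pieces: local regional groups of klt singularities have bounded-index abelian normal subgroups (Jordan-type property), with bounds depending only on $d$. Assembling the steps gives a subnormal series whose successive quotients are abelian after passing to bounded-index subgroups. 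The number of abelian layers is at most $2(d-c)$ from the reduction plus the contribution of $W$, and the derived length bound $2d-1$ should come from this count.

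For the base, we need uniform index bounds for standard-coefficient klt Calabi-Yau pairs of dimension $c\le2$. For $c=0$ the group is trivial. For $c=1$, $W$ is $\mathbb P^1$ with orbifold points or an elliptic curve, so the groups are spherical/Euclidean orbifold groups with a $\mathbb Z^2$ of index at most $6$. For $c=2$, we apply Proposition~\ref{prop:cy-cover}: the log index-one cover has degree $s$ bounded by the coefficient set and thus by $d$, and it yields a canonical surface with $K\sim0$. This is an abelian or K3 surface, or a quotient thereof, with $\pi_1(Y_{\reg})$ abelian or finite of bounded order. Known classification of klt log Calabi-Yau surfaces (bounded index, Jordan constants) bounds the index.

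The main obstacle will be the middle step: making the reduction \emph{effective}, i.e.\ showing that at each extension step the index of the good subgroup is bounded only in terms of $d$, and ensuring that extensions of abelian-by-finite groups stay solvable with controlled derived length after passing to a bounded-index normal subgroup (taking normal cores multiplies indices by bounded factorials). I would first try to establish a uniform Jordan property for the finite groups appearing (the local groups from \cite{Bra21} and the fiber orbifold groups), since Jordan's theorem for finite subgroups of $\mathrm{GL}_d$ gives bounded-index abelian normal subgroups. This would let each layer contribute at most two to the derived length, and one layer fewer at the top, giving the bound $2d-1$.
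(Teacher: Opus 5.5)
\textbf{There is a genuine gap, at the step you flag yourself, and it is not only a matter of making constants effective.} Nothing relates $\Gam(X,D)$ to the regional group of a minimal lc centre $W$ of a completion $(X,B,\bM)$.

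\begin{itemize}
\item Restricting to a subvariety, or to a punctured tubular neighbourhood of a divisorial lc place, only gives homomorphisms \emph{into} $\Gam(X,D)$ from local groups. These maps are neither surjective nor part of an exact sequence with controlled kernel. Lefschetz-type comparisons would need ampleness, which lc centres do not have.
\item The lc centres belong to $B$, not to $D_{\st}$. Iterated generalized adjunction produces a generalized pair on $W$ with a moduli part and coefficients that are in general not standard. Its group has no natural map to or from $\Gam(X,D)$.
\end{itemize}

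Hence the ``filtration of length at most $d-c$'' and the layer count leading to $2d-1$ are unsupported. The appeal to bounded complements of fixed coregularity for generalized pairs is also unsupported; to my knowledge this is not established in the generality you need, and it is not needed.

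There is also a smaller error. A canonical surface $Y$ with $K_Y\sim0$ need not have $\pi_1(Y_{\reg})$ abelian or finite: for a Kummer surface it is $\mathbb Z^4\rtimes\mathbb Z/2$. The dimension-two bound should be taken directly from \cite[Theorem~B]{GLM23}, as in Proposition~\ref{prop-dim2}.

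\textbf{The missing idea is to replace adjunction by fibrations and induct on the dimension.}

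\begin{enumerate}
\item On a small $\mathbb Q$-factorialization put $P=D_{\st}$.
\item If $K_X+P$ is pseudo-effective, Lemma~\ref{lem:cy-coreg} makes $(X,P)$ a standard-coefficient klt Calabi--Yau pair with $d\le c\le 2$. This case is covered by Propositions~\ref{prop-dim1} and~\ref{prop-dim2}.
\item Otherwise, run the $(K_X+P)$-MMP to a Mori fibre space $Z\to W$, carrying the completion crepantly.
\item If $W$ is a point, Theorem~\ref{thm:fano} gives a finite group with an abelian subgroup of index at most $J(d)$.
\item If $\dim W=w\ge1$, the construction in the proof of Theorem~\ref{thm-induction-coreg} gives an exact sequence $\Gam(F',R_{Z'}|_{F'})\to\Gam(U,R_{Z'}|_U)\to\Gam(W',R_{W'})\to1$. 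Its middle term surjects onto $\Gam(X,D)$.
\begin{itemize}
\item The base pair is klt of absolute coregularity at most $c$, by Lemma~\ref{lem:discriminant} and Remark~\ref{rem:coreg-facts}.
\item The fibre group is finite with an abelian subgroup of index at most $J(d-w)$, by Lemma~\ref{lem-Fano-extraction}.
\end{itemize}
\end{enumerate}

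The uniform input is therefore the Jordan property for regional groups of log Fano pairs from \cite{BFMS20}. It does not come from bounded coefficients or from local groups of klt singularities.

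Lemma~\ref{lem-virt-sol} then raises the derived length from $2w-1$ to $2w+1\le 2d-1$, with index at most $B(w,c)\,(J(d-w)^2)!$. The recursion on dimension, followed by taking a normal core, finishes the proof.
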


\subsection{Torus covers and augmented irregularity}
The following theorem provides torus covers for pairs:
\begin{introthm}[Torus Covers for pairs]
\label{introthm-torus-covers}
    Let $(X,D)$ be a projective klt pair with $K_X+D \sim_{\mathbb{Q}} 0$. Then there exists a connected finite quasi-\'etale cover $\nu:Y \to X$, such that $Y= A \times Z$ has a product structure with the following properties:
    \begin{enumerate}
        \item $A$ is an Abelian variety of dimension $\tilde{q}(X)$.
        \item There is a boundary $D_Z$ on $Z$ such that $(Z,D_Z)$ is klt and $K_Z+D_Z\sim_{\mathbb{Q}} 0$. Moreover, $\tilde{q}(Z)=0$ and $\nu^*D=\operatorname{pr}_Z^*D_Z$.
    \end{enumerate}
\end{introthm}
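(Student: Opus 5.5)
We would first pass to a quasi-\'etale cover on which the canonical class becomes trivial in a strong sense, and then split off the Albanese factor. Start with the index-one cover of $K_X$ (not of $K_X+D$): since $(X,D)$ is klt with $K_X+D\sim_{\mathbb Q}0$, $X$ itself is klt. The cover we want must be quasi-\'etale, so it cannot absorb $D$. Instead we use the Albanese-type decomposition for klt varieties with $-K$ pseudo-effective/nef in the following form.

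The first step is to maximize irregularity. Since $\tilde q(X)\le\dim X$, there is a connected finite quasi-\'etale cover $X_1\to X$ with $q(X_1)=\tilde q(X)$. Then $\tilde q(X_1)=\tilde q(X)$, and every further quasi-\'etale cover of $X_1$ has irregularity $q(X_1)$. The pullback pair $(X_1,D_1)$, with $D_1$ the pullback of $D$, is still klt with $K_{X_1}+D_1\sim_{\mathbb Q}0$, because the cover is quasi-\'etale.

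Next we study the Albanese map $\alpha\colon X_1\to A_1=\operatorname{Alb}(X_1)$. Since $-K_{X_1}\sim_{\mathbb Q}D_1$ is effective, and $K_{X_1}+D_1\equiv0$ with $(X_1,D_1)$ klt, we have $-K_{X_1}$ nef in the log sense. The structure theorem for klt pairs with nef anticanonical class (Zhang, Cao--Höring and its singular extensions) then gives that $\alpha$ is surjective with connected fibres and is locally trivial. We then apply the splitting result for klt pairs with $K+\Delta\equiv0$, e.g.\ via Kawamata's theorem that a variety with $\kappa=0$ has a fibration isotrivial over the Albanese. Concretely, after a further isogeny $A\to A_1$ and base change, the fibration becomes a product $Y=A\times Z$. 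Here $A$ is an abelian variety of dimension $q(X_1)=\tilde q(X)$. The base change $Y\to X_1$ is \'etale, so the composite $\nu\colon Y\to X$ is quasi-\'etale.

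The boundary must then be shown to descend to a pullback from $Z$. Let $D_Y=\nu^*D$. Restricted to a fibre $\{a\}\times Z$, adjunction gives $K_Z+D_Y|_{\{a\}\times Z}\sim_{\mathbb Q}0$. Each component of $D_Y$ either dominates $A$ or not. A non-dominant component would be $\operatorname{pr}_A^{-1}$ of an effective nonzero divisor on $A$. We would exclude this by noting that $D_Y\equiv -K_Y=-\operatorname{pr}_Z^*K_Z$ is trivial on curves in $A\times\{z\}$, whereas a nonzero effective divisor pulled back from $A$ and nef-trivial on $A$ would be numerically zero, contradicting effectivity. A similar argument applies to the horizontal parts: $D_Y\cdot(A\times\{z\})$-curves is $0$, and $D_Y$ is effective with no component containing $A\times\{z\}$ for general $z$. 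This forces $D_Y$ to be invariant under translations, hence of the form $\operatorname{pr}_Z^*D_Z$ by rigidity, since the components form families parametrized by the Chow variety and the abelian variety acts. The klt property and $K_Z+D_Z\sim_{\mathbb Q}0$ then follow from the product structure.

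Finally we need $\tilde q(Z)=0$. Any quasi-\'etale $Z'\to Z$ with $q(Z')>0$ gives the quasi-\'etale cover $A\times Z'\to X$ with irregularity exceeding $\tilde q(X)$, a contradiction. The main obstacle is the product splitting: making the Albanese fibration an actual product after a quasi-\'etale cover in the singular klt setting. We would try to invoke the known results for klt varieties with nef anticanonical class or numerically trivial canonical class, such as Ambro's or Druel's decomposition, applied to $X_1$ directly. A fallback is to use the index-one cover of $K_{X_1}+D_1$ to reduce to the case $K\equiv0$, where the Beauville--Bogomolov-type decomposition of Greb--Guenancia--Kebekus and Höring--Peternell gives the abelian factor. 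That factor then has to be descended back along the cyclic cover, which is not quasi-\'etale; this descent is the delicate point.
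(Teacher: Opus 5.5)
Your outline is essentially the paper's proof. The splitting you single out as the main obstacle is exactly \cite[Theorem~4.8]{Amb05}, which the paper applies to every connected finite quasi-\'etale cover; this yields surjectivity of the Albanese map (hence $q(X')\le\dim X$, so $\qtilde(X)$ is attained, a bound you state before justifying), and it yields the \'etale-trivial splitting of the pair including its boundary, so the nef-anticanonical structure theorems, your boundary-descent paragraph and the index-one-cover fallback are all unnecessary; your concluding maximality argument for $\qtilde(Z)=0$ is the paper's, and only your discarded opening claim that $X$ itself is klt is inaccurate, since $K_X$ need not be $\mathbb Q$-Cartier.
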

The proof, given in Section~\ref{sec:new-results}, uses
\cite[Theorem~4.8]{Amb05}. For pairs, the branched orbifold covers
can contribute additional free abelian factors. In particular, the irregularity of
the underlying variety alone does not determine the virtual rank of $\Gam(X,D)$.
We illustrate this by Example~\ref{ex:rank-product}.
The augmented irregularity is nevertheless bounded by the absolute coregularity:
\begin{introcor}\label{intro:irregularity}
For a projective klt pair $(X,D)$ of finite absolute coregularity,
\[
 \qtilde(X)\le\widehat{\operatorname{coreg}}(X,D).
\]
\end{introcor}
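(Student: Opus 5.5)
The plan is to compare $\qtilde(X)$ with the virtual rank of $\Gam(X,D)$ and then apply Theorem~\ref{intro:coreg-all}. Put $c=\widehat{\operatorname{coreg}}(X,D)$. Finite absolute coregularity means $(X,D)$ admits a generalized lc Calabi--Yau completion, so by Theorem~\ref{intro:coreg-all} the group $\Gam(X,D)$ has a normal free abelian subgroup of finite index and rank at most $2c$. It therefore suffices to show that every connected finite quasi-\'etale cover $X'\to X$ satisfies $2q(X')\le\operatorname{vrank}\Gam(X,D)$.

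Fix such a cover $X'\to X$. The first step produces a surjection from a finite-index subgroup of $\Gam(X,D)$ onto $\pi_1(X'_{\reg})$. The cover is étale over $X_{\reg}$ away from a set of codimension at least two. Hence its restriction over $X_{\reg}$ corresponds to a finite-index subgroup $H\le\pi_1(X_{\reg})$. There are natural surjections
\[
\pi_1(X_{\reg})\twoheadrightarrow\Gam(X,D)
\]
and
\[
\pi_1(X'_{\reg}\cap p^{-1}(X_{\reg}))\twoheadrightarrow\pi_1(X'_{\reg}).
\]
The second one holds because removing codimension $\ge2$ subsets does not change $\pi_1$ on a smooth variety. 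This points the wrong way. So I instead use that the orbifold group is a quotient of $\pi_1(X_{\reg})$. The correct route is to pass to $\pi_1(X'_{\reg})$ through the pullback pair.

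Set $D'=p^*D$. Since $p$ is quasi-\'etale, $K_{X'}+D'=p^*(K_X+D)$. Then $(X',D')$ is klt, and $\Gam(X',D')$ is a finite-index subgroup of $\Gam(X,D)$, because the orbifold structure pulls back along a quasi-\'etale cover. Moreover $\Gam(X',D')$ surjects onto $\pi_1(X'_{\reg})$, which in turn surjects onto $\pi_1(X')$. The key fact used here is that the regional orbifold group maps onto the fundamental group of the regular locus by filling in the orbifold loops.

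The second step bounds $q(X')$ by the abelianization. The Albanese map of a resolution $\tilde X'$ factors through $X'$, since klt singularities are rational. It gives a surjection
\[
\pi_1(X')\twoheadrightarrow\pi_1(\tilde X')\twoheadrightarrow \mathbb Z^{2q(X')}
\]
up to finite index. Here $\pi_1(\tilde X')\cong\pi_1(X')$ for klt $X'$ by Takayama, and $H_1(\tilde X',\mathbb Z)$ has rank $2q(\tilde X')=2q(X')$. So $\Gam(X',D')$ surjects onto a group of rank $2q(X')$.

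Finally, a finite-index subgroup of a virtually-$\mathbb Z^{r}$ group is virtually $\mathbb Z^{r}$, and quotients of virtually-$\mathbb Z^r$ groups have virtual rank at most $r$. Therefore
\[
2q(X')\le\operatorname{vrank}\Gam(X',D')=\operatorname{vrank}\Gam(X,D)\le 2c.
\]
Taking the supremum over all covers gives $\qtilde(X)\le c$.

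The main obstacle is the first step: verifying that $\Gam(X',D')$ really is a finite-index subgroup of $\Gam(X,D)$ when $p$ is only quasi-\'etale. This requires care with the standard approximation $D_{\st}$ under pullback, since the coefficients of $p^*D$ are the same as those of $D$ precisely because $p$ is unramified in codimension one. I would handle it by working on the big open set where $p$ is étale and $X$ is smooth. There the orbifold structures match, and removing codimension-two loci does not affect orbifold fundamental groups. This makes the subgroup identification formal.
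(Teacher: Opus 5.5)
Your argument is correct and essentially matches the paper's. You pull back to the klt pair $(X',D')$, use that $\Gam(X',D')$ is a finite-index subgroup of $\Gam(X,D)$, and surject it onto $H_1$ of a resolution modulo torsion, which has rank $2q(X')$ by Hodge symmetry and rationality of klt singularities. The differences are minor: the paper takes a resolution $V$ that is an isomorphism over $X'_{\reg}$, so $\pi_1(X'_{\reg})\twoheadrightarrow\pi_1(V)$ is immediate and Takayama's theorem is not needed, and you should delete the abandoned false start in your second paragraph, since $\Gam(X,D)$ surjects onto $\pi_1(X_{\reg})$ and not conversely.
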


\subsection{Polynomial growth and uniformization}\label{intro:growth-uniformization}
Theorem~\ref{intro:nef} confirms the polynomial-growth conjecture
\cite[Conjecture~1.4]{MW25}, originally formulated in
\cite[Conjecture~1.2]{Wan22}. In fact, it gives an orbifold
strengthening and a bound in terms of absolute coregularity.
\begin{introcor}\label{intro:growth}
Let $(X,D)$ be a projective klt pair with $-(K_X+D)$ nef, and
put $c=\widehat{\operatorname{coreg}}(X,D)$.
Both $\Gam(X,D)$ and $\pi_1(X_{\reg})$ have polynomial growth
of degree at most $2c$.
\end{introcor}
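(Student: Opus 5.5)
We deduce Corollary~\ref{intro:growth} from Theorem~\ref{intro:nef} together with the standard relation between virtual abelianity and growth. Since $-(K_X+D)$ is nef and $(X,D)$ is klt, Theorem~\ref{intro:nef} shows that $\Gam(X,D)$ is virtually abelian of rank $r\le 2c$. Its finite-index free abelian subgroup $\mathbb Z^r$ is finitely generated, so $\Gam(X,D)$ is finitely generated. Growth type is a quasi-isometry invariant, and a finite-index subgroup is quasi-isometric to the ambient group. Hence $\Gam(X,D)$ has the same growth as $\mathbb Z^r$, which is polynomial of degree exactly $r$: balls of radius $R$ in $\mathbb Z^r$ with the standard generators have cardinality comparable to $R^r$. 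So the degree is at most $2c$.

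For $\pi_1(X_{\reg})$, we use the natural surjection
\[
\pi_1(X_{\reg})=\pi_1(X_{\reg})\twoheadrightarrow \pi_1^{\orb}(X_{\reg},D_{\st}|_{X_{\reg}})=\Gam(X,D)
\]
only as motivation. The growth of a group is not controlled by that of a quotient, so we do not argue through it. Instead we apply the results to the pair $(X,0)$.

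First, $(X,0)$ is klt: $X$ is $\mathbb Q$-factorial-free in general, but $K_X$ need not be $\mathbb Q$-Cartier. We therefore argue via completions. Put $\bM=\overline{-(K_X+D)}$, the b-divisor given by the nef Cartier closure. Then $(X,D,\bM)$ is generalized lc with $K_X+D+\bM_X\sim_{\mathbb Q}0$, so it is a generalized lc Calabi--Yau completion of $(X,0)$. This is the observation made in the paragraph after Theorem~\ref{intro:nef}.

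Theorem~\ref{intro:coreg-all}, with $\Gam(X,0)=\pi_1(X_{\reg})$, then gives virtual abelianity of rank at most $2\widehat{\operatorname{coreg}}(X,0)$. Here we need the absolute coregularity of $(X,0)$, not of $(X,D)$. Since any completion of $(X,D)$, meaning $B\ge D\ge0$, is also a completion of $(X,0)$, the minimum defining the invariant for $(X,0)$ is taken over a larger set. Hence
\[
\widehat{\operatorname{coreg}}(X,0)\le \widehat{\operatorname{coreg}}(X,D)=c.
\]
The same quasi-isometry argument then gives polynomial growth of degree at most $2c$.

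The main point to check is this monotonicity of absolute coregularity under shrinking the boundary, together with the fact that Theorem~\ref{intro:coreg-all} applies to $(X,0)$. The latter requires $(X,0)$ to be a klt pair, which in this paper means $K_X$ is $\mathbb Q$-Cartier. If that hypothesis is missing, I would first pass to a small $\mathbb Q$-factorialization $X'\to X$. It exists because $(X,D)$ is klt, and it is an isomorphism over $X_{\reg}$ outside codimension two, so $\pi_1(X'_{\reg})$ surjects onto $\pi_1(X_{\reg})$. But a surjection does not preserve upper growth bounds in the direction we need for the quotient either: a quotient of a virtually abelian group of rank $\le 2c$ is virtually abelian of rank $\le 2c$, hence still has polynomial growth of degree at most $2c$. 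So on $X'$, where $(X',D')$ is klt with $-(K_{X'}+D')$ nef as a pullback and $(X',0)$ is klt, we conclude and then push the bound down through the quotient. That transfer is the step I expect to require the most care.
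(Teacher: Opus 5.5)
Your treatment of $\Gam(X,D)$ matches the paper: Theorem~\ref{intro:nef} gives a finite-index $\mathbb Z^r$ with $r\le 2c$, and the group has the same growth as $\mathbb Z^r$. For $\pi_1(X_{\reg})$, however, you have the natural map backwards. By definition, $\Gam(X,D)$ is $\pi_1(X_{\reg}\setminus\Supp D_{\st})$ modulo the normal closure of the $\gamma_i^{n_i}$. The group $\pi_1(X_{\reg})$ is the same group modulo the normal closure of all the meridians $\gamma_i$. Hence the surjection is $\Gam(X,D)\twoheadrightarrow\pi_1(X_{\reg})$. Growth bounds do pass to quotients: with image generators, the image of a ball is the corresponding ball in the quotient. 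Moreover, the image of a finite-index $\mathbb Z^r$ is a finite-index abelian subgroup of rank at most $r$. That one line is the paper's whole argument for $\pi_1(X_{\reg})$, and it makes your detour unnecessary.

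The detour is nevertheless sound once tidied up. Take a small $\mathbb Q$-factorialization $X'\to X$. Then $(X',0)$ is klt, since $K_{X'}$ is $\mathbb Q$-Cartier and $D'\ge0$, and $(X',D',\overline{-(K_{X'}+D')})$ is a completion of $(X',0)$. Completions of $(X,D)$ pull back crepantly with unchanged coefficients, so $\widehat{\operatorname{coreg}}(X',0)\le\widehat{\operatorname{coreg}}(X',D')\le c$. State this monotonicity on $X'$ rather than on $X$, where $(X,0)$ need not be a pair. Finally, $\pi_1(X'_{\reg})\cong\pi_1(X_{\reg})$ by Lemma~\ref{lem-MMP-group}, so no quotient step is needed there, and Theorem~\ref{intro:coreg-all} applies. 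Compared with the paper, your route gives the potentially sharper bound $2\widehat{\operatorname{coreg}}(X',0)$ for $\pi_1(X_{\reg})$. The cost is $\mathbb Q$-Cartier and coregularity bookkeeping that the stated bound $2c$ does not require. You should also remove two self-contradictory sentences. One claims $(X,0)$ is klt while noting $K_X$ need not be $\mathbb Q$-Cartier. The other says surjections do not preserve upper growth bounds, just before you use that they do.
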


Combining our theorem with the structure theorem of
Matsumura-Wang~\cite[Theorem~1.1]{MW25} gives the following
refinement of uniformization; see Theorem~\ref{thm:nef-uniformization}.
\begin{introthm}\label{intro:nef-uniformization}
Let $(X,D)$ be a projective klt pair with $-(K_X+D)$ nef, and
put $q=\qtilde(X)$. Then $\pi_1(X_{\reg})$ is virtually
$\mathbb Z^{2q}$, and it is finite if and only if $q=0$.
There is a connected finite quasi-\'etale cover $p:X'\to X$
such that
\[
 \pi_1(X'_{\reg})\xrightarrow{\ \simeq\ }\pi_1(X')
 \simeq\mathbb Z^{2q},\qquad
 (\widetilde X',\widetilde D')\simeq
 \mathbb C^q\times Z\times(F,D_F).
\]
Here $\widetilde X'\to X'$ is the ordinary universal cover,
$\widetilde D'$ is the pullback of $D'=p^*D$, the projective
variety $Z$ is canonical with $K_Z\sim0$, and $(F,D_F)$ is a
projective klt pair with $F$ rationally connected and
$-(K_F+D_F)$ nef. Moreover,
$\pi_1(Z_{\reg})=\pi_1(F_{\reg})=1$.
Either factor may be a point.
\end{introthm}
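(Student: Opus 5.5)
We prove Theorem~\ref{intro:nef-uniformization} by combining Theorem~\ref{intro:nef}, Corollary~\ref{intro:irregularity} and the Matsumura--Wang structure theorem~\cite[Theorem~1.1]{MW25}.

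\emph{Step 1: the virtual rank of $\pi_1(X_{\reg})$.} Since $-(K_X+D)$ is nef, $(X,D,\overline{-(K_X+D)})$ is a generalized lc Calabi--Yau completion of $(X,0)$. Hence Theorem~\ref{intro:coreg-all} applies to $\pi_1(X_{\reg})=\Gam(X,0)$: this group is finitely generated and virtually free abelian, say virtually $\mathbb Z^{r}$. Passing to the connected quasi-\'etale cover $X_1\to X$ attached to a finite-index normal subgroup $\mathbb Z^r\subset\pi_1(X_{\reg})$, we get $\pi_1(X_{1,\reg})\simeq\mathbb Z^r$. The pulled-back pair $(X_1,D_1)$ is again klt, and $-(K_{X_1}+D_1)$ is nef.

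\emph{Step 2: the product structure and $r=2q$.} Apply \cite[Theorem~1.1]{MW25} to $(X_1,D_1)$, after possibly a further quasi-\'etale cover $p\colon X'\to X$. This should give a decomposition of the universal cover of $X'$ as $\mathbb C^{q'}\times Z\times(F,D_F)$, where $Z$ is canonical with $K_Z\sim 0$ and $(F,D_F)$ is klt, rationally connected, with $-(K_F+D_F)$ nef. The factor $Z$ is simply connected with $\tilde q(Z)=0$.
\begin{itemize}
\item $\pi_1(F_{\reg})$ is finite by \cite{Bra21}-type arguments for rationally connected klt bases. More directly, $\pi_1(F)=1$ and Theorem~\ref{intro:nef} gives virtual abelianity with $\tilde q(F)=0$, which forces finiteness, and then triviality after a cover.
\item $\pi_1(Z_{\reg})$ is virtually $\mathbb Z^{2\tilde q(Z)}$, hence finite, by~\cite[Theorem~1.3]{FGSW26}.
\end{itemize}
Passing to further finite quasi-\'etale covers kills these finite parts; the triviality statements for $Z$ and $F$ then hold after replacing them by their universal quasi-\'etale covers, which exist because the groups are finite. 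We are left with $\pi_1(X'_{\reg})\simeq\pi_1(X')\simeq\mathbb Z^{2q'}$, the deck group acting on $\mathbb C^{q'}$ by translations. Then $X'$ is a fibration over the abelian variety $\mathbb C^{q'}/\mathbb Z^{2q'}$, so $q'=\tilde q(X')$. Since $\tilde q$ is invariant under finite quasi-\'etale covers, $q'=\tilde q(X)=q$. This gives $r=2q$. Finiteness holds iff $r=0$, i.e.\ iff $q=0$.

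\emph{Step 3: the isomorphism $\pi_1(X'_{\reg})\to\pi_1(X')$.} Surjectivity is standard for normal varieties. For injectivity, use the product description: the singular locus of $X'$ sits in the $Z\times F$ directions. Since $\pi_1(Z_{\reg})$ and $\pi_1(F_{\reg})$ are trivial, the map $X'_{\reg}\to X'$ induces an isomorphism on $\pi_1$, by comparing with the torus fibration and a five-lemma argument on the homotopy sequences.

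\emph{Main obstacle.} The difficult step is the bookkeeping in Step 2. We must ensure that a single cover $X'$ simultaneously realizes the Matsumura--Wang splitting and kills all finite pieces, including the pullback of $D$ onto the $F$-factor only. The boundary must not meet the $\mathbb C^q\times Z$ directions; we would verify this using the nefness of $-(K+D)$ restricted to the Albanese fibres together with the Calabi--Yau property of the $Z$-factor.
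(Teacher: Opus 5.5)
\textbf{Verdict: there is a genuine gap, located in the steps you describe as bookkeeping.}

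\emph{The finiteness argument for $F$.} It rests on a false general claim. A rationally connected klt variety with nef anti-log-canonical class need not have finite $\pi_1(F_{\reg})$, nor $\qtilde(F)=0$. For instance, let $E$ be the elliptic curve with multiplication by $\omega=e^{2\pi i/3}$, and let $\mu_3$ act diagonally on $E\times E$. The quotient $Y$ has nine $\frac13(1,1)$ points and $3K_Y\sim0$. Its resolution has $q=p_g=0$ and $\kappa=-\infty$, so $Y$ is rational. Yet $E\times E\to Y$ is quasi-\'etale, $\qtilde(Y)=2$, and $\pi_1(Y_{\reg})$ is infinite. Note that \cite{Bra21} gives finiteness only for local groups of klt singularities and for log Fano pairs. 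Even granting $\qtilde(F)=0$, passing from virtual abelianity to finiteness needs a rank formula that you do not prove.

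\emph{Replacing the fibre.} ``Replacing $F$ by its universal quasi-\'etale cover'' is not an operation on a fibre alone. You would need a cover of the total space that is still a locally constant fibration, and you do not construct one.

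\emph{Circularity in Step~3.} Step~3 uses $\pi_1(F_{\reg})=1$ to obtain $\pi_1(X'_{\reg})\simeq\pi_1(X')$. But you have no valid independent proof of $\pi_1(F_{\reg})=1$; the logic has to run the other way.

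\emph{The rank.} A fibration of $X'$ over an abelian variety of dimension $q'$ only gives $q(X')\ge q'$. It does not give $q'=\qtilde(X')$.

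\emph{The missing idea} is Lemma~\ref{lem:regular-rank}. Take your cover $X_1$ with $\pi_1(X_{1,\reg})\simeq\mathbb Z^r$.
\begin{enumerate}
\item The local regional fundamental groups of the klt pair are finite by \cite{Bra21}, so they map trivially into the torsion-free group $\mathbb Z^r$. Hence $\pi_1(X_{1,\reg})\to\pi_1(X_1)$ is an isomorphism, and every connected finite quasi-\'etale cover of $X_1$ is \'etale.
\item The same torsion-freeness gives $\pi_1(V)\simeq\mathbb Z^r$ for a resolution $V$. Hodge theory and rational singularities then give $r=b_1(V)=2q(X_1)$. Also $2q(W)\le r$ for every quasi-\'etale cover $W$, and together these give $r=2\qtilde(X)$.
\item Two further covers are needed: one from \cite[Theorem~1.1]{MW25}, and one from \cite[Theorem~1.3]{FGSW26} applied to the klt base $Y$ with $K_Y\num0$. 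It is the latter, not Matsumura--Wang, that produces $A\times Z$ with $\pi_1(Z_{\reg})=1$. By the first point both covers are automatically \'etale, so $X'$ keeps $\pi_1(X'_{\reg})\simeq\pi_1(X')$.
\item Then $\pi_1(X')\simeq\pi_1(A\times Z)$ gives $\dim A=q$.
\item The universal cover of $X'$ restricts over $X'_{\reg}$ to $\mathbb C^q\times Z_{\reg}\times F_{\reg}$, which is simply connected. This is what forces $\pi_1(F_{\reg})=1$.
\end{enumerate}
Finally, the boundary issue you raise is already part of the Matsumura--Wang conclusion, which is a locally constant fibration of pairs.
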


For $D=0$, the product description implies Wang's
uniformization conjecture~\cite[Conjecture~1, first arXiv version]{Wan22}.
The implication from polynomial growth to this decomposition was
already proved in~\cite[Section~6.2]{Wan22}.
The ordinary universal-cover decomposition also follows from
\cite{MW25} together with the recent Calabi-Yau fundamental-group
results~\cite{GSS26,FGSW26}; we do not claim it as an independent
new structure theorem. Related uniformizing covers, without the
assertion that they are universal, are constructed in the more
general K\"ahler setting in~\cite[Corollary~1.3]{MWWZ26}.
The additional conclusion here controls the fundamental group of the smooth loci,
including those of the factors, and gives their rank and growth.

\subsection{Relation to our earlier manuscript on coregularity and fundamental groups}\label{sec:earlier-paper}
This article develops further the results of our earlier preprint
\emph{Fundamental Groups, Coregularity, and Low Dimensional Klt
Calabi Yau Pairs}~\cite{BF24}.
The principal extension is the all-dimensional coregularity theorem
and its application to projective klt pairs with nef log-anticanonical
divisor, building upon~\cite[Theorem~1.3]{FGSW26}, combined with
the log index-one cover of~\cite[Proposition~12]{CGG24} and our modified reduction statement Theorem~\ref{thm-induction-coreg}. We would like to mention that the preprint~\cite{FGSW26} also announces a forthcoming separate treatment of pairs, nef
varieties, and compact K\"ahler spaces. 

Theorem~\ref{thm-induction-coreg} of the present work is a modified version of~\cite[Theorem~4.2]{BF24}. The conjectural and low-dimensional virtual-abelianity statements
of~\cite{BF24} are subsumed by the theorems here. We do not use its
passage from canonical threefolds to terminalizations as such a
birational modification need not preserve the regular-locus
fundamental group. The associated Euler-characteristic argument is
therefore replaced by the direct application of~\cite{FGSW26}.

We remark in particular that for pairs, the rank is bounded by twice the absolute coregularity;
our earlier bound $\max(4,2\qtilde(X))$ from~\cite{BF24} in dimension three is not
valid, as Example~\ref{ex:rank-product} shows. In the present work, for standard coefficients,
Proposition~\ref{prop:cy-cover} instead identifies the exact rank
using the augmented irregularity of the log index-one cover of~\cite{CGG24}.

\subsection*{Acknowledgements}

The authors would like to thank Joaqu\'in Moraga for suggesting the problem and useful discussions,
as well as Stefano Filipazzi, Stefan Kebekus, and Mirko Mauri for very useful comments and references, in particular for pointing out several issues in the earlier preprint~\cite{BF24}.
FF thanks UCLA for its hospitality during his visit.

\subsection*{AI disclosure}
All original mathematical ideas and proofs in this manuscript are the authors' own. Thus, we take full responsibility for the content and mathematical correctness of the manuscript. ChatGPT 6 Astra was used only for an additional check of the arguments, proofreading and improvement of the exposition. This has not led to substantial changes, in particular not with respect to mathematical content.

\section{Preliminaries}

In this section, we provide the necessary background and prove some preliminary results for the rest of the article.

\subsection{Virtual abelianity and solvability of groups}
\begin{definition}
\label{def:virtually-abelian}
We say that a group $G$ is \textit{virtually abelian of rank $n$} if it has a subgroup of finite index which is abelian of rank $n$.
\end{definition}

\begin{remark}
A group $G$ being virtually abelian of rank $n$ is equivalent to having a \textit{normal} subgroup of finite index which is abelian of rank $n$, since one can take the normal core of the abelian subgroup from Definition~\ref{def:virtually-abelian}, which is again abelian of the same rank and has finite index in $G$.

All groups occurring here are indeed finitely generated: a complex algebraic
variety has the homotopy type of a finite CW complex, and orbifold fundamental
groups are quotients of fundamental groups of algebraic open sets. 

For these groups, the above definition is equivalent to having a finite-index subgroup $\mathbb Z^n$.
\end{remark}

\begin{lemma}
\label{lem-virtual-groups}
Let $F, G$ and $H$ be groups fitting in an exact sequence 
$$F \xrightarrow{\phi} G \xrightarrow{\psi} H\rightarrow 1.$$ 
If $H$ is finitely generated and virtually abelian of rank $n$ and $F$ is finite, then $G$ is virtually abelian of rank at most $n$.
\end{lemma}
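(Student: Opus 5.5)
We argue directly with finite-index subgroups, first replacing the sequence by a short exact one. The image $K=\phi(F)=\ker\psi$ is a finite normal subgroup of $G$, and $G/K\cong H$. Since $H$ is finitely generated and virtually abelian of rank $n$, we may choose, by the Remark above, a normal finite-index subgroup $A\le H$ with $A\cong\mathbb Z^n$. Put $G_1=\psi^{-1}(A)$. This is a normal subgroup of finite index in $G$, and it sits in the exact sequence
\[
1\longrightarrow K\longrightarrow G_1\longrightarrow \mathbb Z^n\longrightarrow 1 .
\]
The goal is now to find a finite-index subgroup of $G_1$ that is abelian of rank at most $n$.

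\textbf{Killing the conjugation action.} Conjugation gives a homomorphism $G_1\to\operatorname{Aut}(K)$, and $\operatorname{Aut}(K)$ is finite. Its kernel, the centralizer $C=C_{G_1}(K)$, therefore has finite index in $G_1$. The image $\psi(C)$ has finite index in $\mathbb Z^n$, so $\psi(C)\cong\mathbb Z^n$. Intersecting gives the central extension
\[
1\to K\cap C\to C\to\psi(C)\to1,
\]
with $K\cap C$ finite and central in $C$.

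\textbf{Splitting the extension.} Because $\psi(C)$ is free abelian, we can lift a basis $e_1,\dots,e_n$ to elements $g_1,\dots,g_n\in C$. The commutators $[g_i,g_j]$ lie in $K\cap C$, which is central. For a central extension the commutator map is bilinear, so $[g_i^m,g_j^m]=[g_i,g_j]^{m^2}$. Take $m$ to be the exponent of the finite group $K\cap C$. Then the elements $g_i^m$ commute pairwise, and the subgroup $\Gamma=\langle g_1^m,\dots,g_n^m\rangle$ is abelian. The map $\psi$ sends $\Gamma$ onto $m\,\psi(C)\cong\mathbb Z^n$. Since $\Gamma$ is abelian and generated by $n$ elements, it is a quotient of $\mathbb Z^n$, so this surjection is an isomorphism and $\Gamma\cong\mathbb Z^n$.

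\textbf{Finite index.} The subgroup $\Gamma$ has finite index in $C$: the image $\psi(\Gamma)$ has finite index in $\psi(C)$, and the kernel $K\cap C$ is finite, so $[C:\Gamma]\le[\psi(C):\psi(\Gamma)]\cdot|K\cap C|$. Hence $\Gamma$ has finite index in $G$, and $G$ is virtually abelian of rank $n$, in particular of rank at most $n$. By the Remark, we may pass to the normal core if a normal subgroup is wanted.

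The main obstacle is the splitting step, namely ensuring that an abelian subgroup of full rank exists and not merely a nilpotent one. Centralizing $K$ first and then using bilinearity of commutators, so as to raise the lifts to the exponent of $K\cap C$, handles it. As an alternative, one can note that the class of the central extension in $H^2(\mathbb Z^n,K\cap C)$ is torsion and dies on restriction to a finite-index subgroup.
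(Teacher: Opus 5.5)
Your proof is correct and follows essentially the same route as the paper. You lift a basis of a finite-index $\mathbb Z^n\le H$ and kill the conjugation action on the finite kernel: you pass to the centralizer $C_{G_1}(K)$, while the paper uses a uniform power $g_i^m$. You then raise the lifts to a power that annihilates the now-central commutators, and your bilinearity identity $[g_i^m,g_j^m]=[g_i,g_j]^{m^2}$ is exactly the paper's iterated computation $g_i^{fm}g_j^{fm}g_i^{-fm}=x^fg_j^{fm}=g_j^{fm}$. Your packaging is slightly cleaner, and you make explicit the Hopfian step giving $\Gamma\cong\mathbb Z^n$, which the paper leaves implicit.
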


\begin{proof}
By replacing $F$ with its image $\phi(F)$ we can assume that $F$ is a subgroup of $G$ with $f$ elements.
Since $H$ is virtually abelian, there exists an abelian group $A \leqslant H$ of rank $n$ and of finite index in $H$. After replacing $A$ by a subgroup of finite index, we may assume that $A\simeq\mathbb Z^n$. Let $A$ be generated by $\{a_i\}_{\{1 \leq i\leq n\}}$. Let $g_i$ be elements in $G$ mapping to $a_i$ via $\psi$. For any $x$ in $F$ and $g_i$, we have that $g_i x g_i^{-1}$ gets mapped to the identity $e\in H$ via $\psi$, hence $g_i x g_i^{-1} \in F$. Thus conjugation by elements  $y \in \langle g_i \rangle$ gives a group action  on $F$. Since $F$ is a finite set, there exists a uniform constant $m$ such that $m$-iterations of all of these actions are the identity, in particular $y^{m} x y^{-m} =x$ for any $x \in F$ and $y \in \langle g_i \rangle$. 

On the other hand, for any $y \in \langle g_i \rangle$, there exists $x\in F$, such that $g_i^{m} y g_i^{-m}=x y$, since it maps to the same element of $A$ as $y$. Therefore 
$$
g_i^{fm}g_j^{fm}g_i^{-fm}=g_i^{(f-1)m}xg_j^{fm}g_i^{-(f-1)m}=xg_i^{(f-1)m}g_j^{fm}g_i^{-(f-1)m}= \ldots=x^{f}g_j^{fm}=g_j^{fm}.
$$
We conclude that $\langle  g_i^{fm}\rangle_i$ is an abelian subgroup of rank $n$ of $G$. This subgroup has finite index in $G$, as it surjects onto $\langle a_i^{fm}\rangle_i$, a finite index subgroup of $H$, via a homomorphism with finite kernel.

\end{proof}

\begin{definition}
A group is virtually solvable of derived length at most $\ell$ and index
at most $i$ if it has a solvable subgroup of derived length at most
$\ell$ and index at most $i$.
\end{definition}

\begin{lemma}\label{lem-fin-sol}
    Let $F, G$ and $H$ be groups fitting in an exact sequence 
$$F \xrightarrow{\phi} G \xrightarrow{\psi} H\rightarrow 1.$$ 
If $H$ is virtually solvable of derived length at most $n$ and index at most $i$ and $F$ is finite, then $G$ is virtually solvable of derived length at most $n+1$ and index at most $i(|F|-1)!$.
\end{lemma}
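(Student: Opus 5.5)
As in the proof of Lemma~\ref{lem-virtual-groups}, we first replace $F$ by its image $\phi(F)$. Then $F$ is a finite normal subgroup of $G$ (it is the kernel of $\psi$) with $f=|F|$ elements, and $H\cong G/F$.

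Let $S\leqslant H$ be a solvable subgroup of derived length at most $n$ and index at most $i$, and put $G_1=\psi^{-1}(S)$. Then $[G:G_1]=[H:S]\le i$, and $G_1$ surjects onto $S$ with kernel $F$.

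Next we control the conjugation action of $G_1$ on $F$. Let $G_1$ act on $F$ by conjugation, which is well defined since $F$ is normal. This gives a homomorphism $\rho\colon G_1\to\operatorname{Sym}(F)$. The identity $e\in F$ is fixed by every conjugation, so $\rho$ actually lands in the symmetric group on the $f-1$ nonidentity elements. Hence the centralizer $C=\ker\rho=C_{G_1}(F)$ has index at most $(f-1)!$ in $G_1$. Therefore $[G:C]\le i\,(f-1)!$, which is the required index bound.

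It remains to bound the derived length of $C$. The image $\psi(C)$ lies in $S$, so it is solvable of derived length at most $n$, because subgroups of solvable groups have derived length at most that of the group. Consequently $C^{(n)}\subseteq\ker\psi\cap C=F\cap C=Z(F)$, and $Z(F)$ is abelian. Hence $C^{(n+1)}=[C^{(n)},C^{(n)}]=1$, so $C$ is solvable of derived length at most $n+1$.

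This last step is the one place needing care: $F$ itself need not be solvable, so we cannot simply stack derived series. Passing to the centralizer is exactly what makes the intersection with $F$ land in the abelian group $Z(F)$. This gives the statement with the index bound $i(|F|-1)!$, where $|F|$ is read as the order of the image of $\phi$; that image is at most as large as $F$, so the bound in the statement also holds.
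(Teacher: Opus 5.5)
Your proof is correct and follows essentially the same route as the paper. You pass to $G_1=\psi^{-1}(S)$ and take the centralizer of the image of $F$. You bound its index through the conjugation action on $F\setminus\{e\}$, while the paper uses $|\mathrm{Aut}(F)|\le(|F|-1)!$, which amounts to the same bound. You then conclude from $C^{(n)}\subseteq F\cap C=Z(F)$ being abelian, a step the paper states more tersely.
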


\begin{proof}
Let $H'$ be the solvable subgroup of $H$ of derived length at most $n$ and index at most $i$. By letting $G'$ be the preimage of $H'$ under $\psi$, we obtain the exact sequence
$$F \xrightarrow{\phi} G' \xrightarrow{\psi} H'\rightarrow 1,$$ 
where $[G:G']=[H:H']\le i$.

By taking a quotient if needed, we may assume that $F$ is a subgroup of $G'$.
Let $Z$ be the centralizer of $F$ in $G'$. The index of $Z$ in $G'$ is at most $|{\rm Aut}(F)|$, which is at most $(|F|-1)!$.

Since $F\cap Z$ is abelian and $\psi(Z) \leqslant H'$ is solvable of derived length at most $n$, the group $Z$ is solvable of derived length at most $n+1$. Hence $G$ is virtually solvable of derived length at most $n+1$ and index at most $i|{\rm Aut}(F)|\leq i(|F|-1)!$.

\end{proof}

The following was proven in~\cite{CD89}, we recall the argument for the sake of completeness.

\begin{lemma}\label{lem:characteristic}
If a finite group $F$ has an abelian subgroup of index at most $J$,
then it has a characteristic abelian subgroup of index at most $J^2$. 
\end{lemma}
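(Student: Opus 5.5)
Let $A\le F$ be an abelian subgroup with $[F:A]\le J$. The plan is to take the intersection of all images of $A$ under automorphisms of $F$, and then control its index by a counting argument in the style of Chermak--Delgado.

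\textbf{Step 1: the subgroup.} Put
\[
A^{*}=\bigcap_{\varphi\in\operatorname{Aut}(F)}\varphi(A).
\]
This subgroup is abelian, since it is contained in $A$. It is characteristic by construction. The naive bound on its index, via the number of automorphic images, is useless, so the index has to be controlled differently.

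\textbf{Step 2: a product trick.} The key observation is the following. If $B$ and $C$ are abelian subgroups of $F$, then $B\cap C$ is centralized by the subgroup generated by $B$ and $C$. Using the Chermak--Delgado measure $m(H)=|H|\cdot|C_F(H)|$, I would show the following.

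\begin{itemize}
\item The maximum value of $m$ on $F$ is attained on a sublattice of subgroups, the Chermak--Delgado lattice.
\item The least element $M$ of this lattice is characteristic.
\item $M$ is abelian: it lies in its own centralizer, because for the minimal member $M$ one has $M\le C_F(M)$.
\end{itemize}

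\textbf{Step 3: the index of $M$.} For the abelian subgroup $A$ we have $A\le C_F(A)$, hence
\[
m(A)\ge |A|^2\ge \frac{|F|^2}{J^2}.
\]
Maximality gives $m(M)\ge m(A)$. Since $M$ is abelian, $C_F(M)\supseteq M$, and trivially $|C_F(M)|\le |F|$. Therefore
\[
|M|\cdot|F|\ge m(M)\ge \frac{|F|^2}{J^2},
\]
so $[F:M]\le J^2$.

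\textbf{Main obstacle.} The main obstacle is Step 2, the lattice property. It rests on the inequality
\[
m(\langle H,K\rangle)\,m(H\cap K)\ge m(H)\,m(K),
\]
which follows from
\[
|\langle H,K\rangle|\ge \frac{|H||K|}{|H\cap K|}
\qquad\text{and}\qquad
C_F(H\cap K)\supseteq C_F(H)C_F(K).
\]
This inequality shows that the maximizers are closed under joins and intersections. The least element is then characteristic, because automorphisms permute the maximizers.

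To see that $M$ is abelian, I would argue as follows. $C_F(M)$ is also a maximizer, since $m(C_F(M))\ge m(M)$ by the double-centralizer inclusion. Then $M\cap C_F(M)$ is a maximizer too. By minimality, $M\le C_F(M)$, so $M$ is abelian.
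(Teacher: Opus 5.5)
Your proposal is correct and is essentially the paper's argument: the Chermak--Delgado measure $m(H)=|H|\,|C_F(H)|$, closure of its maximizers under intersection by counting $HK$ and $C_F(H)C_F(K)$, the least maximizer being characteristic and abelian via $H\subseteq C_F(C_F(H))$, and the estimate $|M|\,|F|\ge m(M)\ge m(A)\ge |F|^2/J^2$. The only flaw is Step~1: the subgroup $A^{*}$ is never used and should be deleted, because it can actually violate the bound (for a hyperplane $A$ in $F=(\mathbb Z/p)^k$ one gets $A^{*}=1$, of index $p^k>p^2=J^2$ once $k\ge3$).
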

\begin{proof}
For $H\le F$ put
$m(H)=|H|\,|C_F(H)|$. For subgroups $H,K\le F$, counting the
products $HK$ and $C_F(H)C_F(K)$ gives
\[
 m(H)m(K)\le m(H\cap K)m(\langle H,K\rangle).
\]
Thus the subgroups maximizing $m$ are closed with respect to intersection.
Their unique smallest member $A$ is characteristic. The centralizer
of a maximizing subgroup is also maximizing, since
$H\subseteq C_F(C_F(H))$. Hence $A\subseteq C_F(A)$, so $A$ is
abelian. If $B$ is the given abelian subgroup, then
\[
 |A|\,|F|\ge m(A)\ge m(B)\ge |B|^2\ge |F|^2/J^2.
\]
This proves the assertion.
\end{proof}

\begin{lemma}\label{lem-virt-sol}
Suppose $F\to G\to H\to1$ is exact, $F$ is finite and has an
abelian subgroup of index at most $J$, and $H$ has a solvable
subgroup of derived length at most $\ell$ and index at most $i$.
Then $G$ has a solvable subgroup of derived length at most $\ell+2$
and index at most $i(J^2)!$.
\end{lemma}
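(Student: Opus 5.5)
We replace $F$ by its image, pass to a finite-index subgroup of $G$, and then use a centralizer argument as in Lemma~\ref{lem-fin-sol}. The new point is that Lemma~\ref{lem:characteristic} lets us work with a characteristic abelian subgroup, which keeps the index bound independent of $|F|$.

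\emph{Reduction.} First replace $F$ by $\phi(F)$, so that $F$ is a finite normal subgroup of $G$ with $G/F\cong H$. A quotient of $F$ still has an abelian subgroup of index at most $J$: if $B\le F$ is abelian of index at most $J$, its image has index at most $J$. Let $H'\le H$ be solvable of derived length at most $\ell$ with $[H:H']\le i$, and let $G'=\psi^{-1}(H')$. Then $[G:G']\le i$, and $F$ is normal in $G'$ with $G'/F\cong H'$.

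\emph{Characteristic subgroup.} By Lemma~\ref{lem:characteristic}, $F$ contains a characteristic abelian subgroup $A$ with $[F:A]\le J^2$. Since $F\trianglelefteq G'$ and $A$ is characteristic in $F$, $A$ is normal in $G'$, so $G'$ acts by conjugation on $A$ and on $F/A$.

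\emph{Bounding the index.} We cannot use the centralizer of $F$ or of $A$, since $|\mathrm{Aut}(A)|$ is not bounded in terms of $J$. Instead, $G'$ acts on the coset space $F/A$, which has at most $J^2$ points. Let $K$ be the kernel of this action. Then
\[
[G':K]\le (J^2)!,
\qquad\text{hence}\qquad
[G:K]\le i\,(J^2)!.
\]

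\emph{Derived length.} It remains to show $K$ is solvable of derived length at most $\ell+2$. The image of $K$ in $H'$ has derived length at most $\ell$, so $K^{(\ell)}\subseteq F$. Hence it suffices to show that every subgroup $S\le F$ lying in $K$ satisfies $S''=1$; applied to $S=K^{(\ell)}$ this gives $K^{(\ell+2)}=1$.

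Such an $S$ acts trivially on the cosets $F/A$, so $S\subseteq A$ would follow, and then $S'=1$. But trivial action on cosets only means $sfA=fA$ for all $f\in F$, i.e.\ $s\in\bigcap_f fAf^{-1}=A$, using that $A$ is normal. So indeed $K\cap F\subseteq A$. This gives $K^{(\ell)}\subseteq A$, so $K^{(\ell+1)}=1$, which is even better than the claimed bound of $\ell+2$.

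\emph{Main obstacle.} The delicate step is choosing which subgroup to take the kernel of. It has to satisfy two requirements at once: its index must be bounded only in terms of $J$ and $i$, and its intersection with $F$ must be abelian. The permutation action on $F/A$, with $A$ characteristic, is what achieves both. If this shortcut failed, I would fall back on taking the centralizer of $F/A$ acting on the quotient group $F/A$, which also has order at most $J^2$. Its preimage then has an extra abelian layer $A$ and an abelian-by-centralized layer, which would account for the stated derived length $\ell+2$.
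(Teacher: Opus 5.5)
\textbf{Gap: the claim $K\cap F\subseteq A$ in the derived-length step is false.} The action of $G'$ on $F/A$ can only be conjugation, $g\cdot fA=gfg^{-1}A$. This is well defined because $A\trianglelefteq G'$, while left translation by $g\notin F$ does not preserve $F$. So $K=\{g\in G' : gfg^{-1}f^{-1}\in A \text{ for all } f\in F\}$.

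Restricted to $s\in F$, this action is still conjugation, not left translation. Thus $s$ acts trivially if and only if $sA\in Z(F/A)$, not if and only if $s\in A$. Your computation $sfA=fA\Rightarrow s\in\bigcap_f fAf^{-1}=A$ describes the kernel of left translation of $F$ on its own cosets. That action does not extend to $G'$ and does not define $K$.

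Concretely, take $H=1$, $\ell=0$, $G=F=Q_8$ and $J=2$. The only characteristic abelian subgroup of index at most $4$ is $A=\{\pm1\}$. Since $F/A$ is abelian, $K=Q_8$, and $K^{(\ell+1)}=K'=\{\pm1\}\neq1$. So the claimed improvement to derived length $\ell+1$ fails for your $K$. The reduction and the index bound $[G:K]\le i\,(J^2)!$ are fine.

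\textbf{Repair.} $(K\cap F)/A$ lies in $Z(F/A)$, hence is abelian, so $(K\cap F)'\subseteq A$. Since $A$ is abelian, $(K\cap F)''=1$. Together with $K^{(\ell)}\subseteq K\cap F$, this gives $K^{(\ell+2)}=1$, which is exactly the stated bound. Your ``fallback'' centralizer of the quotient group $F/A$ is the same subgroup $K$, so it is the correct analysis of your construction rather than an alternative to it. Once corrected, your argument coincides with the paper's. The paper passes to $G/A$ and applies Lemma~\ref{lem-fin-sol} to $F/A\to G/A\to H\to1$; this yields the centralizer of $F/A$, namely $K/A$, with derived length at most $\ell+1$. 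Lifting through the abelian kernel $A$ then adds the second layer.
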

\begin{proof}
Replace $F$ by its image; the same index bound persists. By
Lemma~\ref{lem:characteristic}, choose a characteristic abelian
subgroup $A\le F$ with $[F:A]\le J^2$. It is normal in $G$.
Apply Lemma~\ref{lem-fin-sol} to
$F/A\to G/A\to H\to1$, and lift the resulting solvable subgroup
through the abelian kernel $A$.
\end{proof}
Taking a normal core does not increase derived length and changes an index
bound $i$ to at most $i!$; in particular, if $[G:H]\le i$, the action of $G$ on $G/H$
has kernel $\bigcap_{g\in G}gHg^{-1}\le H$ and image of order at most $i!$.

\subsection{Generalized pairs and singularities of the Minimal Model Program}
A generalized pair $(X,B,\bM)$ consists of a normal variety, an
effective $\mathbb Q$-divisor $B$, and a b-$\mathbb Q$-Cartier
b-divisor $\bM$ which descends to a nef divisor on a projective
birational model, with $K_X+B+\bM_X$ $\mathbb Q$-Cartier.
For $f:V\to X$ write
\[
 K_V+B_V+\bM_V=f^*(K_X+B+\bM_X).
\]
The generalized log discrepancy of a divisor $E$ on $V$ is
$1-\coeff_E B_V$. Analogous to the non-generalized notions, \textit{generalized klt} and \textit{generalized lc} mean that these log discrepancies are strictly positive and nonnegative, respectively.
Generalized dlt modifications are taken in the sense of~\cite{FS23}.

If $X$ is $\mathbb Q$-factorial and $(X,B,\bM)$ is generalized klt,
then $(X,B)$ is ordinary klt: on a model where $\bM$ descends,
the negativity lemma gives $f^*\bM_X-\bM_V\ge0$, so the ordinary
boundary is bounded above by the generalized boundary.

\begin{lemma}\label{lem:interpolation}
Let $(X,P)$ be a $\mathbb Q$-factorial klt pair and let
$(X,B,\bM)$ be generalized lc, with $P\le B$.
For rational $0<t\le1$, the generalized pair
\[
 (X,(1-t)B+tP,(1-t)\bM)
\]
is generalized klt. On any higher model its generalized log
discrepancies are
\[
 (1-t)A_{X,B,\bM}(E)+tA_{X,P}(E)>0.
\]
In particular, for any fixed finite collection of lc places
of $(X,B,\bM)$, the pair $(X,(1-t)B+tP,(1-t)\bM)$  has discrepancies in $(0,1)$ for $t>0$ sufficiently
small.
\end{lemma}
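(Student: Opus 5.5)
Because generalized log discrepancies are affine in the data, the plan is to compute them on a common higher model and read off positivity. Fix a projective birational model $f\colon V\to X$ on which $\bM$ descends, and let $E$ be a prime divisor on $V$. Write
\[
K_V+B_V+\bM_V=f^*(K_X+B+\bM_X),\qquad K_V+P_V=f^*(K_X+P),
\]
which is possible since $X$ is $\mathbb Q$-factorial. Here $A_{X,B,\bM}(E)=1-\coeff_E B_V$ and $A_{X,P}(E)=1-\coeff_E P_V$.

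First we check that $(X,(1-t)B+tP,(1-t)\bM)$ is a generalized pair. The boundary $(1-t)B+tP$ is effective, $(1-t)\bM$ is still b-nef and descends on $V$, and the log canonical divisor $K_X+(1-t)(B+\bM_X)+tP$ is $\mathbb Q$-Cartier because $X$ is $\mathbb Q$-factorial.

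Next we compute. Take the convex combination of the two pullback identities with weights $(1-t)$ and $t$. By linearity of $f^*$,
\[
K_V+\bigl((1-t)B_V+tP_V\bigr)+(1-t)\bM_V=f^*\bigl(K_X+(1-t)B+tP+(1-t)\bM_X\bigr).
\]
So the generalized boundary on $V$ is $(1-t)B_V+tP_V$. Its log discrepancy at $E$ is
\[
1-(1-t)\coeff_E B_V-t\coeff_E P_V=(1-t)A_{X,B,\bM}(E)+tA_{X,P}(E),
\]
which is the claimed formula. The same formula holds on any higher model $V'\to V$, since $\bM$ also descends there. Every divisor over $X$ appears on such a model, and we include the divisors on $X$ itself.

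Positivity follows directly. Generalized lc gives $A_{X,B,\bM}(E)\ge0$, klt of $(X,P)$ gives $A_{X,P}(E)>0$, and $t>0$, so the sum is $>0$. Hence the pair is generalized klt.

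For the last claim, fix finitely many lc places $E_1,\dots,E_k$ of $(X,B,\bM)$. For these $A_{X,B,\bM}(E_j)=0$, so the log discrepancy of $E_j$ is $tA_{X,P}(E_j)$. This is positive, and it lies below $1$ once $t<\min_j 1/A_{X,P}(E_j)$. Because the collection is finite, such a $t$ exists.

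The only point needing care is that "discrepancies in $(0,1)$" refers to the log discrepancies of the chosen lc places, which is how we read it. With that reading, this last step is the only thing beyond bookkeeping, and the argument above is entirely formal.
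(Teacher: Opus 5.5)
Your proposal is correct and takes essentially the same route as the paper. Both pull back the identity $K_X+(1-t)B+tP+(1-t)\bM_X=(1-t)(K_X+B+\bM_X)+t(K_X+P)$ to a model where $\bM$ descends, read off the boundary $(1-t)B_V+tP_V$ and hence the convex-combination formula, and observe that on lc places this reduces to $tA_{X,P}(E)\in(0,1)$ for small $t>0$ (your reading of ``discrepancies'' as log discrepancies matches the paper's).
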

\begin{proof}
Pull back the identity
\[
 K_X+(1-t)B+tP+(1-t)\bM_X
 =(1-t)(K_X+B+\bM_X)+t(K_X+P).
\]
On a common resolution the resulting boundary is $(1-t)B_V+tP_V$,
where $K_V+P_V=f^*(K_X+P)$. Its log discrepancies are therefore
the stated convex combinations. For an lc place they equal
$tA_{X,P}(E)$, which belongs to $(0,1)$ when $t>0$ is sufficiently small.
\end{proof}

\begin{lemma}\label{lem-big-klt}
Let $(X,B,\bM)$ be a projective generalized klt pair and let $H$
be an ample $\mathbb Q$-Cartier divisor. For every rational $\epsilon>0$
there is an ordinary klt boundary $\Delta\ge B$ with
\[
 K_X+\Delta\sim_{\mathbb Q}K_X+B+\bM_X+\epsilon H.
\]
If finitely many prescribed prime divisors $E_i$ over $X$ satisfy
$0<A_{X,B,\bM}(E_i)<1$, we may also arrange
$0<A_{X,\Delta}(E_i)<1$ for every $i$.

\end{lemma}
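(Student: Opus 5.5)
The plan is to absorb the nef part $\bM$ into an effective boundary after perturbing it by $\epsilon H$, working on a log resolution where $\bM$ descends. Let $f\colon V\to X$ be such a resolution, with $\bM_V$ nef and $\bM_V+\mathrm{Exc}(f)+f^{-1}_*\Supp B$ simple normal crossing. Write
\[
 K_V+B_V+\bM_V=f^*(K_X+B+\bM_X).
\]
Generalized klt means every coefficient of $B_V$ is $<1$; these coefficients may be negative.

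First, choose a small rational $\delta>0$ and an effective $f$-exceptional $\mathbb Q$-divisor $F$ such that $f^*H-\delta F$ is ample on $V$; such an $F$ exists since $f$ is projective birational. Then for any $0<\epsilon$,
\[
 \bM_V+\epsilon f^*H-\epsilon\delta F
\]
is nef plus ample, hence ample. By Bertini we can take a general effective $G\sim_{\mathbb Q}\bM_V+\epsilon(f^*H-\delta F)$ with small coefficients, whose components are in general position with respect to the snc configuration. Then $(V,B_V+\epsilon\delta F+G)$ has snc support, and its coefficients stay $<1$ once $\epsilon$ is small. Push forward: set $\Delta=f_*(B_V+\epsilon\delta F+G)=B+f_*G\ge B$, which is effective. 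Then $K_V+B_V+\epsilon\delta F+G\sim_{\mathbb Q}f^*(K_X+B+\bM_X+\epsilon H)$, and this class is a pullback. It follows that $K_X+\Delta\sim_{\mathbb Q}K_X+B+\bM_X+\epsilon H$ is $\mathbb Q$-Cartier, and the crepant pullback boundary is $B_V+\epsilon\delta F+G$ up to $\mathbb Q$-linear equivalence. More precisely, the two classes agree because both are $f$-trivial and have the same pushforward, by the negativity lemma. Hence $(X,\Delta)$ is klt, since all coefficients on the log resolution are $<1$, and this gives the first assertion.

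For the prescribed divisors $E_i$, first enlarge $V$ so that every $E_i$ is a divisor on $V$. This is harmless because a higher resolution still descends $\bM$. Then
\[
 A_{X,\Delta}(E_i)=1-\coeff_{E_i}(B_V+\epsilon\delta F+G)=A_{X,B,\bM}(E_i)-\epsilon\delta\coeff_{E_i}F,
\]
because a general $G$ contains no $E_i$. The $E_i$ with $0<A_{X,B,\bM}(E_i)<1$ therefore keep discrepancies in $(0,1)$ for $\epsilon$ small, since the inequalities are strict.

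The main obstacle is the identification of the pullback of $K_X+\Delta$. One has to check that the boundary obtained from $G$ on $V$ is genuinely the crepant one, that is, that $f^*(K_X+\Delta)$ equals $K_V+B_V+\epsilon\delta F+G$. I would handle this by noting that $K_V+B_V+\epsilon\delta F+G$ is $\mathbb Q$-linearly equivalent to a pullback from $X$. Two divisors on $V$ that are both $\mathbb Q$-linearly equivalent to pullbacks and have equal pushforwards differ by an $f$-exceptional divisor that is numerically $f$-trivial, and such a divisor is zero by the negativity lemma. If $X$ is not $\mathbb Q$-factorial, this same argument also supplies the $\mathbb Q$-Cartier property of $K_X+\Delta$, using $\mathbb Q$-linear rather than numerical equivalence throughout.
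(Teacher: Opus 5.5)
Your proof is correct and follows essentially the paper's route: descend $\bM$ to a log resolution, write $\bM_V+\epsilon f^*H$ as an ample class plus a small effective divisor, take a general member, push forward, and identify $f^*(K_X+\Delta)$ by the negativity lemma. The only difference is that you get the decomposition from an exceptional $F$ with $f^*H-\delta F$ ample, whereas the paper applies Kodaira's lemma to the nef and big divisor $\bM_V+\epsilon f^*H$.

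Two small repairs are needed.
\begin{itemize}
\item Since $\epsilon$ is fixed by the statement, you must shrink $\delta$ rather than $\epsilon$ to keep the coefficients below one and the $A_{X,\Delta}(E_i)$ positive. This is harmless, because $f^*H-\delta F$ stays ample as $\delta$ decreases.
\item Such an $F$ need not exist for an arbitrary projective birational $f$; a small resolution is a counterexample. You should therefore choose the log resolution suitably (e.g.\ by blow-ups with centres of codimension at least two), or simply invoke Kodaira's lemma as the paper does.
\end{itemize}
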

\begin{proof}
Take a log resolution $f:V\to X$ on which $\bM$ descends and all
the prescribed divisors occur. Write
$K_V+B_V+\bM_V=f^*(K_X+B+\bM_X)$.
The divisor $\bM_V+\epsilon f^*H$ is nef and big. By Kodaira's lemma write it
as $H_\delta+\delta E$, up to $\mathbb Q$-linear equivalence,
where $E\ge0$ is fixed and $H_\delta$ is ample for sufficiently small rational
$\delta>0$. Choose a general effective representative $G_\delta\sim_{\mathbb Q}H_\delta$
with sufficiently small coefficients. We may arrange that
$(V,B_V+G_\delta+\delta E)$ is sub-klt and that $G_\delta$ contains
none of the prescribed divisors. Their boundary coefficients tend to
those of $B_V$ as $\delta\to0$ and therefore remain in $(0,1)$.
Set $\Delta=B+f_*(G_\delta+\delta E)\ge B$.
Then $K_X+\Delta\sim_{\mathbb Q}K_X+B+\bM_X+\epsilon H$ is
$\mathbb Q$-Cartier, and
\[
 K_V+B_V+G_\delta+\delta E=f^*(K_X+\Delta).
\]
Indeed, the difference is exceptional and $\mathbb Q$-linearly trivial
over $X$, so it vanishes by the negativity lemma.
The equality proves both that $(X,\Delta)$ is klt and that
$A_{X,\Delta}(E_i)=1-\coeff_{E_i}(B_V+G_\delta+\delta E)$
belongs to $(0,1)$. Negative components of $B_V$ are exceptional
and disappear under $f_*$; the added divisor is effective.
\end{proof}

\begin{lemma}\label{lem-BCHM-generalized-pairs}
In Lemma~\ref{lem:interpolation}, assume in addition that $X$ is projective.
For any finite collection $E_1,\ldots,E_r$ of exceptional prime divisors
over $X$ with $A_{X,B,\bM}(E_i)=0$, there is a projective birational
morphism $\mu:Y\to X$, with $Y$ $\mathbb Q$-factorial, whose
exceptional prime divisors are exactly $E_1,\ldots,E_r$.
\end{lemma}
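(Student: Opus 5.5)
The plan is to extract exactly $E_1,\ldots,E_r$ by a BCHM-type construction applied to a suitable klt pair. Fix a rational $0<t\le1$ so small that, by Lemma~\ref{lem:interpolation}, the generalized pair
\[
(X,B_t,\bM_t):=(X,(1-t)B+tP,(1-t)\bM)
\]
is generalized klt and satisfies $0<A_{X,B_t,\bM_t}(E_i)=tA_{X,P}(E_i)<1$ for every $i$. Since $X$ is $\mathbb Q$-factorial, every divisor is $\mathbb Q$-Cartier. I then replace the generalized pair by an ordinary klt pair using Lemma~\ref{lem-big-klt}: choose an ample $H$ and a small rational $\epsilon>0$, and obtain a klt boundary $\Delta\ge B_t$ with $0<A_{X,\Delta}(E_i)<1$ for all $i$. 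The nef part plays no further role; only this ordinary klt pair $(X,\Delta)$ and the prescribed log discrepancies in $(0,1)$ are used from now on.

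The extraction itself is the standard one from \cite[Corollary~1.4.3]{BCHM}. Take a log resolution $f:V\to X$ of $(X,\Delta)$ on which all $E_i$ appear as divisors. Write $K_V+\Delta_V=f^*(K_X+\Delta)$. Define $\Gamma$ to be $f^{-1}_*\Delta$ plus the $E_i$ with coefficient $1-A_{X,\Delta}(E_i)\in(0,1)$, plus every other exceptional divisor $F$ with coefficient $1-\eta$ for a small rational $\eta>0$. Then $(V,\Gamma)$ is klt and
\[
K_V+\Gamma\equiv_X \sum_{F\neq E_i}\bigl(A_{X,\Delta}(F)-\eta\bigr)F.
\]
For $\eta$ small the right-hand side is effective and $f$-exceptional, and its support is exactly the set of exceptional divisors other than the $E_i$; discrepancies of a klt pair are positive, and I take $\eta<\min_F A_{X,\Delta}(F)$. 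Because the boundary is big over $X$ (birational morphism), BCHM applies: run a $(K_V+\Gamma)$-MMP over $X$ with scaling. It terminates with a $\mathbb Q$-factorial model $Y\to X$. By the negativity lemma, this MMP contracts exactly the support of that exceptional effective divisor, namely all exceptional divisors except the $E_i$. Conversely, it never contracts the $E_i$, since the divisors contracted are precisely components of the stable base locus over $X$, which is contained in the support of $\sum(A(F)-\eta)F$. Thus $\mu:Y\to X$ is $\mathbb Q$-factorial with exceptional divisors exactly $E_1,\ldots,E_r$.

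The main obstacle is the bookkeeping in the first step. The $E_i$ have generalized log discrepancy $0$, which is not allowed for a klt extraction; the interpolation is precisely what moves them into the range $(0,1)$. One must also check that Lemma~\ref{lem-big-klt} preserves this while absorbing the nef part, since BCHM is available only for ordinary klt pairs. A second point to handle is that the $E_i$ must be exceptional. This holds by hypothesis, and it is needed so that the extracted model is genuinely birational with no non-exceptional divisors lost. If any $E_i$ has $A_{X,P}(E_i)\ge1$, choosing $t$ smaller fixes it, because the interpolated log discrepancy $tA_{X,P}(E_i)$ tends to $0$.
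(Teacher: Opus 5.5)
Your proposal is correct and follows the paper's proof: you use Lemma~\ref{lem:interpolation} to move the log discrepancies of the $E_i$ into $(0,1)$, pass to an ordinary klt boundary via Lemma~\ref{lem-big-klt}, and then extract exactly the $E_i$ by \cite[Corollary~1.4.3]{BCHM10}. The only difference is that you also spell out the relative MMP argument behind that corollary, which the paper simply cites, and your unpacking of it is sound.
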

\begin{proof}
Choose $t>0$ sufficiently small in Lemma~\ref{lem:interpolation}
so that all the prescribed generalized log discrepancies belong to $(0,1)$.
Apply Lemma~\ref{lem-big-klt} to this generalized klt pair, using an
ample divisor on $X$, to obtain an ordinary klt boundary $\Delta$
with $0<A_{X,\Delta}(E_i)<1$ for every $i$.
Now~\cite[Corollary~1.4.3]{BCHM10} extracts exactly these divisors
on a $\mathbb Q$-factorial model. Its discrepancy hypotheses hold
because $(X,\Delta)$ is klt and all the specified discrepancies are smaller than one.
\end{proof}

\begin{proposition}[Generalized canonical bundle formula]\label{prop:cbf}
Let $f:Z\to W$ be a projective contraction and let $(Z,B,\bM)$
be generalized lc, with $K_Z+B+\bM_Z\sim_{\mathbb Q}0$.
There is an induced generalized lc pair $(W,B_W,\bN)$ with
$B_W\ge0$, $\bN$ b-nef and b-$\mathbb Q$-Cartier, and
$K_W+B_W+\bN_W\sim_{\mathbb Q}0$.
The coefficient of a prime divisor $E$ in $B_W$ is $1-t_E$,
where $t_E$ is the generalized lc threshold of $f^*E$ over its
generic point. If the total pair is generalized klt, so is the base.
\end{proposition}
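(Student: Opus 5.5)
The plan is to reduce to the usual canonical bundle formula of Ambro and Kawamata for ordinary pairs, and to handle the nef part by passing to a high birational model of $Z$ on which it descends.

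First I will fix a log resolution $\phi\colon Z'\to Z$ on which $\bM$ descends, so that $\bM_{Z'}$ is nef. Write $K_{Z'}+B'+\bM_{Z'}=\phi^*(K_Z+B+\bM_Z)\sim_{\mathbb Q}0$, where $B'$ is a sub-boundary with coefficients at most $1$. Put $f'=f\circ\phi$. Since generalized log discrepancies are computed on $Z'$, the generalized lc threshold of $f^*E$ over the generic point of $E$ equals the ordinary lc threshold of $f'^*E$ with respect to $(Z',B'+\bM_{Z'})$. The nef part contributes no coefficient because it descends.

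Next I will define the discriminant $B_W=\sum(1-t_E)E$ exactly as in the statement. Over the generic point of $W$ the threshold $t_E$ is $1$ for all but finitely many $E$, so $B_W$ is a well-defined $\mathbb Q$-divisor. It is effective because $B\ge0$: over the generic point of $E$, the divisor $B$ is effective on the $Z$-level, and for $t=0$ the pair stays generalized lc there, so $t_E\le1$. Then I will set $\bN$ as the b-divisor of the moduli part, determined by
\[
K_{Z'}+B'+\bM_{Z'}\sim_{\mathbb Q}f'^*(K_W+B_W+\bN_W),
\]
compatibly on all birational models of $W$. This forces $K_W+B_W+\bN_W\sim_{\mathbb Q}0$, since the left-hand side is $\mathbb Q$-trivial and $f'$ is a contraction.

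The nefness and b-$\mathbb Q$-Cartier property of $\bN$ are the main obstacle. Here I would invoke the known generalized canonical bundle formula of Filipazzi, in the form proved in~\cite{FS23} and the literature it cites. That result perturbs $\bM_{Z'}$ by an ample divisor, as in Lemma~\ref{lem-big-klt}, to replace the generalized pair by an ordinary sub-pair $(Z',B'+G)$. It then applies Ambro's b-nefness~\cite[Theorem~4.8]{Amb05} or its lc version (Fujino--Gongyo), and lets the perturbation tend to $0$. Nefness is preserved in the limit because the moduli b-divisors converge on each model.

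Finally, generalized lc-ness or klt-ness of the base follows from the definition of $B_W$ on every higher model of $W$, obtained by base change of $f'$. An lc-place or non-klt place over $W$, namely one with $t_E\le0$ or $t_E=0$, would produce a divisor over $Z$ with generalized log discrepancy $\le0$, respectively $<0$. This contradicts the assumption on $(Z,B,\bM)$.
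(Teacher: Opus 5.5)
Your overall route is the paper's. The paper proves Proposition~\ref{prop:cbf} only by citing \cite[Theorem~1.4 and Proposition~4.16]{Fil20}, and you likewise delegate the one non-formal point, that $\bN$ is b-nef and b-$\mathbb Q$-Cartier, to Filipazzi's generalized canonical bundle formula. Two reference corrections: the source is \cite[Theorem~1.4]{Fil20}, and \cite[Theorem~4.8]{Amb05} is the Albanese statement used for Theorem~\ref{thm:torus}, not the b-nefness of a moduli part.

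The problem is the mechanism you attribute to that theorem. It fails as described, so it cannot serve as the justification.
\begin{itemize}
\item If $H$ is ample on $Z'$ and $G\sim_{\mathbb Q}\bM_{Z'}+\epsilon H$, then $K_{Z'}+B'+G\sim_{\mathbb Q}\epsilon H$. This is ample on the positive-dimensional general fibres of $f'$, hence not $\mathbb Q$-linearly equivalent to a pullback from $W$. So $(Z',B'+G)\to W$ is not an lc-trivial fibration, and neither Ambro's theorem nor its lc version by Fujino--Gongyo applies.
\item If instead $H$ is pulled back from $W$, then $\bM_{Z'}+\epsilon H$ need not be big, and a general effective $G$ with small coefficients need not exist.
\item Restoring triviality over $W$ by subtracting $\epsilon H''$, with $H''\in|H|$ general, creates a negative horizontal component. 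This in general violates the rank-one hypothesis, and then the moduli part can genuinely fail to be nef. For example, take $\mathbb P^1\times C\to C$ with sub-boundary $\Sigma_1+\Sigma_2+\epsilon\Sigma_3-\epsilon\Sigma_4$, where $\Sigma_1,\Sigma_2,\Sigma_3$ are distinct constant sections and $\Sigma_4$ is the graph of a map $C\to\mathbb P^1$ of degree $d>0$. The discriminant vanishes and the moduli part has degree $-\epsilon d$.
\item Letting $\epsilon\to0$ could at best control traces on a fixed model. It would not produce a model on which $\bN$ descends, so b-$\mathbb Q$-Cartierness would remain unproved.
\end{itemize}
This step should rest on \cite[Theorem~1.4]{Fil20} alone, exactly as in the paper.

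There are also two smaller corrections. First, your argument for $B_W\ge0$ is a non sequitur. That $t=0$ is admissible gives $t_E\ge0$, i.e.\ $\coeff_E B_W\le1$, not $t_E\le1$. The correct reason is the inequality in the proof of Lemma~\ref{lem:discriminant}. Since $f$ is surjective, some component $T$ of $f^*E$ dominates $E$, with multiplicity $m_T\ge1$ and $\coeff_T B\ge0$. Generalized lc-ness along $T$ then forces $t_E\le(1-\coeff_T B)/m_T\le1$.

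Second, in your last step the two cases are crossed. If $t_E<0$, there is a divisor over $Z$ with negative generalized log discrepancy, contradicting generalized lc-ness. If $t_E=0$, there is one with generalized log discrepancy zero, contradicting generalized klt-ness. Also compute the thresholds for the sub-pair $(Z',B')$; the class $\bM_{Z'}$ is not part of a boundary. With these fixes, your direct treatment of the singularities of the base is sound; the paper obtains this part from \cite[Proposition~4.16]{Fil20}.
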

This is~\cite[Theorem~1.4 and Proposition~4.16]{Fil20}.
Note that for a general projective surjective morphism, Stein factorization
and the generically finite formula of~\cite[Theorem~1.2]{HL21}
give the corresponding generalized lc base structure. We use the
contraction case below.

\begin{lemma}\label{lem-image-lcc}
In the setting of Proposition~\ref{prop:cbf}, the image of a generalized lc
center is a generalized lc center of the base, or it is the whole base.
\end{lemma}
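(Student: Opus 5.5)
Let $Z'\subset Z$ be a generalized lc center of $(Z,B,\bM)$ and $V=f(Z')$. If $V=W$ there is nothing to prove, so assume $V\subsetneq W$. The plan is to exhibit a prime divisor $F$ over $W$ with centre $V$ and generalized log discrepancy $A_{W,B_W,\bN}(F)=0$. The natural tool is the birational invariance of the generalized canonical bundle formula. For a birational base change $W'\to W$, with $Z'$ a suitable resolution of the main component of $Z\times_W W'$, the crepant pullback $(Z',B_{Z'},\bM)$ induces $(W',B_{W'},\bN)$ with $K_{W'}+B_{W'}+\bN_{W'}$ the crepant pullback of $K_W+B_W+\bN_W$. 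This holds because $\bN$ is defined as a b-divisor, by \cite[Theorem~1.4]{Fil20}, which underlies Proposition~\ref{prop:cbf}. Consequently, for a prime divisor $F\subset W'$ we have $\coeff_F B_{W'}=1-t_F$, where $t_F$ is the generalized lc threshold of the pullback of $F$ over the generic point of $F$, computed for the crepant model $(Z',B_{Z'},\bM)$.

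First, take a log resolution of $(Z,B)$ on which $\bM$ descends and on which $Z'$ is realized by a divisor $E$ with $A_{Z,B,\bM}(E)=0$; if $Z'$ is a divisor in $Z$, take $E=Z'$ itself. Next, choose a birational model $W'\to W$ on which $V$ is dominated by a prime divisor $F$; for instance, let $W'$ be the normalized blow-up of $V$ followed by a resolution. Then replace the total space by a resolution $Z''$ of $Z\times_W W'$ that still contains an lc place dominating $E$. After further blowing up along the preimage of $F$, we may assume that some divisor $E''$ with coefficient $1$ in $B_{Z''}$ maps onto $F$. This uses that $E$ maps into $V$, so the lc place can be followed by choosing a divisor over $Z$ with centre $Z'$ whose image in $W'$ is exactly $F$, which is possible by taking toroidal blow-ups.

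Given such an $E''$, the computation is immediate: over the generic point of $F$, the boundary $B_{Z''}$ already has a component of coefficient $1$ lying over $F$. Hence adding any positive multiple of the pullback of $F$ destroys generalized lc-ness, so $t_F=0$ and $\coeff_F B_{W'}=1$. Therefore $F$ is a generalized lc place of $(W,B_W,\bN)$ with centre $V$. Thus $V$ is a generalized lc center, as required.

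The main obstacle is the middle step: producing an lc place of the total pair that maps onto a divisor $F$ over $W$ with centre exactly $V$, rather than onto something smaller. This uses a standard valuation-theoretic argument. The lc place $E$ defines a divisorial valuation on $Z$, and its restriction to the function field of $W$ is a valuation centred at $V$. By Zariski's theorem the restriction is a divisorial valuation when $f(E)\ne W$, and it is trivial only if $E$ dominates $W$. That would give $V=W$, which is the excluded case. Taking $F$ to be the divisor of this restricted valuation on a high model $W'$, $E$ dominates $F$, which justifies the middle step. One also has to check that the needed independence of the base model is part of \cite[Theorem~1.4]{Fil20}; this is the b-divisor statement that $\bN$ descends in the generalized lc setting.
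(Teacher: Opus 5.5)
Your argument is correct, but it takes a different route from the paper.

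\textbf{The paper's route.} It argues by contradiction. If $T=f(C)\subsetneq W$ were not a generalized lc center, one adds $\epsilon H$, where $H$ is a general Cartier divisor through $T$ avoiding the base's glc centers near the generic point of $T$. Then $(W,B_W+\epsilon H,\bN)$ stays glc, while $(Z,B+\epsilon f^*H,\bM)$ gets negative discrepancy along every lc place over $C$. Since the discriminant shifts by exactly $\epsilon H$ and the moduli part is unchanged, this contradicts the lc equivalence of \cite[Proposition~4.16]{Fil20}.

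\textbf{Your route.} You open that black box instead. Restricting $v_E$ to $\mathbb C(W)$ gives a valuation with centre $V$. By Abhyankar's inequality and Zariski's characterization it is $c\cdot\operatorname{ord}_F$ for a prime divisor $F$ over $W$. On models $Z''\to W'$ where $E$ maps onto $F$, you get $t_F=0$, so $\coeff_F B_{W'}=1$. The b-divisorial compatibility of discriminant and moduli parts then gives $A_{W,B_W,\bN}(F)=0$. This is essentially the mechanism behind the lc equivalence itself.

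\textbf{What each buys.} Your proof is more self-contained. It also gives a slightly stronger statement: every glc place with non-dominant centre restricts to a glc place of the base, not just a glc center. The paper's proof is shorter given the citation and avoids valuation theory, at the cost of the local choice of a general divisor near the generic point of $T$.

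\textbf{A correction to your middle paragraph.} As written, it takes $F$ from the normalized blow-up of $V$ and then produces an lc place over $F$ by toroidal blow-ups. That is false in general, because an arbitrary divisor over $V$ need not be a glc place of the base. For example, if the base structure near $V$ is $(\mathbb A^2,\tfrac56 C)$ with $C$ a cuspidal cubic, the ordinary blow-up of the origin has log discrepancy $1/3$. Then no lc place upstairs can map onto it, since that would force $t_F=0$. Your proof works only because your last paragraph chooses $F$ as the divisor of the restricted valuation. You should write the argument with that choice made from the outset and drop the blow-up/toroidal sketch.
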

\begin{proof}
Let $C$ have proper image $T$, and suppose $T$ is not a generalized
lc center of the base. Near the generic point of $T$, choose a
general effective Cartier divisor containing $T$ but containing
no generalized lc center of the base. Such a choice is possible:
centers properly contained in $T$ can be discarded on this open
set, and centers properly containing $T$ need not lie in the divisor.
On a common log resolution this divisor has zero order along every
discrepancy-zero divisor, since it contains no generalized lc center.
A sufficiently small positive multiple can therefore be added while
preserving generalized lc singularities. Adding its pullback upstairs makes the discrepancy
of any lc place centered on $C$ negative. The discriminant changes
by exactly that divisor and the moduli part is unchanged. This
contradicts the lc equivalence in~\cite[Proposition~4.16]{Fil20}.
\end{proof}

\subsection{Coregularity}
\begin{definition}\label{def:absolute-coregularity} 
Let $(X,B, {\bf M})$ be a generalized log canonical pair. Let $g:(Y,B_Y, {\bf M}) \rightarrow (X,B, {\bf M})$ be a $\mathbb{Q}$-factorial gdlt modification, with $\lfloor B_Y \rfloor=E_1+\ldots +E_r$. 
We define $\mathcal{D}(Y,B_Y,{\bf M})$, the \textit{dual complex} of $(Y,B_Y,{\bf M})$, to be:

\begin{enumerate}
    \item For each $E_i$, there is a vertex $s_{E_i}$ in $\mathcal{D}(Y,B_Y)$,
    \item For any subset $I \subseteq \{1,\ldots, r\}$, and any irreducible component $Z$ of $\cap_{i \in I} E_i$, there is a $(|I|-1)$-dimensional simplex $s_Z$.
    \item For any 
    \begin{itemize}
        \item $I\subseteq \{1, \ldots r\}$,
        \item  $j \in I$,
        \item  any irreducible component $Z$ of $\cap_{i \in I} E_i$,
        \item and $W$ the unique irreducible component of $\cap_{i \in I\setminus \{j\}} E_i$ containing $Z$,
    \end{itemize}
    there is a gluing map given by the inclusion of $s_W$ into $s_Z$ as the unique face that does not contain the vertex $s_{E_j}$.
\end{enumerate}
We define the \textit{dimension of the dual complex} to be the smallest dimension of a maximal simplex in $\mathcal{D}(Y,B_Y,{\bf M})$ (with respect to inclusion). The empty complex has dimension $-1$ by definition. By~\cite{FMP22}, the dimension of $\mathcal{D}(Y,B_Y, {\bf M})$ does not depend on the gdlt modification.
Hence, we can define the \textit{coregularity} of a generalized pair $(X,B, {\bf M})$, to be:

$$\text{coreg}(X,B, {\bf M}):= \text{dim}\, X-\text{dim}\, \mathcal{D}(Y,B_Y, {\bf M})-1.$$

For a generalized pair $(X,D, {\bf M})$ admitting a generalized log Calabi-Yau completion, we define the \textit{absolute coregularity} ${\rm \hat{coreg}}(X,D, {\bf M})$ to be the smallest value of $\text{coreg}(X,B,{\bf M+ N})$, where $(X,B, {\bf M+N})$ is generalized log Calabi-Yau, with $B\geq D$ and $\mathbf N$ a nef b-divisor. 
\end{definition}

A generalized pair admitting such a completion is said to be of
generalized log Calabi-Yau type. 

The absolute coregularity as defined here may be lower than the one in
\cite[Definition~2.24]{FMP22}, where the nef b-divisor is held fixed.
We need to use this definition for the induction in the proof of the main theorems.

\begin{remark}\label{rem:coreg-facts}
The coregularity of a projective generalized lc Calabi-Yau pair is
invariant under crepant birational changes. Its dual complex is
equidimensional, and on a generalized dlt model all minimal lc
centres have dimension equal to the coregularity. For a generalized
klt pair we regard the whole variety as the sole centre.
For the Fano type fibrations with compatible generalized dlt models
used below, the coregularity of the induced base is at most that of
the total space.
\end{remark}
\begin{proof}
The first assertion follows from the invariance of the dual
complex, and equidimensionality from~\cite[Theorem~1.6]{FS23}; see
also~\cite{FMP22}. On compatible generalized dlt models, take a
minimal centre upstairs and apply Lemma~\ref{lem-image-lcc}.
Its image either is the base, of no larger dimension, or contains
a minimal centre of the base. Equidimensionality proves the last claim.
\end{proof}

\begin{lemma}\label{lem-coreg-bir-morphism}
Let $f:Y\dashrightarrow X$ be a birational contraction of generalized
pairs, with $K_Y+D_Y+\bM_Y=f^*(K_X+D_X+\bM_X)$ (interpreted
on a common resolution), and assume $D_Y\ge0$.
Then
\[
 \widehat{\operatorname{coreg}}(Y,D_Y,\bM)
 \ge\widehat{\operatorname{coreg}}(X,D_X,\bM).
\]
\end{lemma}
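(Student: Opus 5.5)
The plan is to transport an optimal completion on $Y$ to $X$ by pushforward and check that this does not increase coregularity. Let $(Y,B_Y,\bM+\bN)$ be a generalized lc Calabi--Yau completion of $(Y,D_Y,\bM)$ realizing $\widehat{\operatorname{coreg}}(Y,D_Y,\bM)$. Thus $B_Y\ge D_Y$, $\bN$ is nef, and $K_Y+B_Y+\bM_Y+\bN_Y\sim_{\mathbb Q}0$. If no completion exists, the left-hand side is $+\infty$ by convention and there is nothing to prove. Put $B_X:=f_*B_Y$. Since $f$ is a birational contraction, $f_*D_Y=D_X$, so $B_X\ge D_X$ and $B_X$ is effective. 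Also $K_X+B_X+\bM_X+\bN_X=f_*(K_Y+B_Y+\bM_Y+\bN_Y)\sim_{\mathbb Q}0$, which in particular shows this divisor is $\mathbb Q$-Cartier.

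Next I show that the two Calabi--Yau pairs are crepant birational. On a common resolution $p:W\to Y$, $q:W\to X$ on which $\bM$ and $\bN$ descend, consider
\[
 p^*(K_Y+B_Y+\bM_Y+\bN_Y)-q^*(K_X+B_X+\bM_X+\bN_X).
\]
This divisor is $\mathbb Q$-linearly trivial. Its pushforward to $X$ is zero, because $f_*B_Y=B_X$ and $f$ contracts no divisor of $X$. It is therefore $q$-exceptional and $q$-numerically trivial, so by the negativity lemma it vanishes. Hence $(Y,B_Y,\bM+\bN)$ and $(X,B_X,\bM+\bN)$ have the same log discrepancies for every divisor over them. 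It follows that $(X,B_X,\bM+\bN)$ is generalized lc. Moreover, the hypothesis $K_Y+D_Y+\bM_Y=f^*(K_X+D_X+\bM_X)$ shows that $(X,B_X,\bM+\bN)$ is a completion of $(X,D_X,\bM)$ in the sense of Definition~\ref{def:absolute-coregularity}. This hypothesis is where $D_Y\ge0$ enters, through $B_Y\ge D_Y$. The crepant relation also gives $B_X\ge D_X$ on $X$; divisors exceptional over $X$ do not matter.

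By Remark~\ref{rem:coreg-facts}, the coregularity of a projective generalized lc Calabi--Yau pair is invariant under crepant birational maps. Therefore
\[
 \operatorname{coreg}(X,B_X,\bM+\bN)=\operatorname{coreg}(Y,B_Y,\bM+\bN)=\widehat{\operatorname{coreg}}(Y,D_Y,\bM).
\]
Since $\widehat{\operatorname{coreg}}(X,D_X,\bM)$ is a minimum over all completions, it is at most this value, which proves the inequality.

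The main obstacle is the crepant-birational step. It has two parts: checking that the pushed-forward boundary really is the crepant transform, which needs $f$ to be a contraction so that no divisor on $X$ is extracted, and invoking invariance of the dual complex under crepant birational maps of generalized pairs rather than only under gdlt modifications. For the latter, I would pass to a common $\mathbb Q$-factorial gdlt model of both pairs, using that they share lc places, and then apply \cite{FMP22}. That is the only nontrivial input.
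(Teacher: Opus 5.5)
Your proposal is correct and follows the paper's own proof: push a completion on $Y$ forward to $X$, observe that its boundary still dominates $D_X=f_*D_Y$ with the nef part unchanged, invoke crepant invariance of coregularity (Remark~\ref{rem:coreg-facts}), and minimize; your negativity-lemma check of crepancy just spells out what the paper asserts in one line. One small correction: the hypothesis $D_Y\ge0$ is needed only so that the left-hand side is defined at all (as the paper notes), not through $B_Y\ge D_Y$.
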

\begin{proof}
Push any generalized lc Calabi-Yau completion on $Y$ to $X$.
Its boundary still dominates $D_X$, and its nef b-divisor is unchanged.
The completion is crepant and has the same coregularity. Taking the
minimum proves the assertion. This is used only when the indicated
boundary is effective.
\end{proof}

For a klt Calabi-Yau pair, the fixed-nef-part version of the following
coregularity assertion is contained in~\cite[Remark~2.26]{FMP22}.
We also allow the nef part of the completion to vary.
\begin{lemma}\label{lem:cy-coreg}
Let $(X,P)$ be a projective klt pair admitting a generalized lc
Calabi-Yau completion. If $K_X+P$ is pseudo-effective, then
$K_X+P\sim_{\mathbb Q}0$ and
$\widehat{\operatorname{coreg}}(X,P)=\dim X$.
\end{lemma}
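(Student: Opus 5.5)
Take a generalized lc Calabi-Yau completion $(X,B,\bM)$ with $B\ge P$, $\bM$ b-nef, and $K_X+B+\bM_X\sim_{\mathbb Q}0$. Then
\[
 K_X+P\sim_{\mathbb Q}-(B-P)-\bM_X .
\]
The left side is pseudo-effective by hypothesis. The plan has three steps: show that $B-P$ and the nef part vanish numerically, upgrade numerical triviality to $\mathbb Q$-linear triviality, and then compute the coregularity.

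\emph{Numerical triviality.} Pass to a model $f:V\to X$ on which $\bM$ descends. The divisor $\bM_X=f_*\bM_V$ is a pushforward of a nef divisor, so it is pseudo-effective (indeed movable). Both $B-P\ge0$ and $\bM_X$ are therefore pseudo-effective, while their sum $-(K_X+P)$ is also pseudo-effective. Intersecting with $H^{\dim X-1}$ for an ample $H$ gives nonnegative degrees summing to zero, so both degrees vanish. An effective divisor of $H$-degree zero is zero, hence $B=P$.

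For the nef part, $f^*\bM_X=\bM_V+E$ with $E\ge0$ exceptional, by the negativity lemma. Since $\bM_X\equiv0$ after the above, we get $f^*\bM_X\equiv0$, so $\bM_V\equiv-E$. Intersecting with $f^*H^{\dim X-2}\cdot\bM_V$ and using nefness, together with the negativity lemma, should force $E=0$ and $\bM_V\equiv0$. Thus $K_X+P\equiv0$.

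\emph{$\mathbb Q$-linear triviality.} I expect this to be the main obstacle. Numerical triviality together with klt gives $K_X+P\sim_{\mathbb Q}0$ by abundance for numerically trivial klt pairs (Nakayama, or Gongyo's result for lc pairs with $K\equiv0$). I would cite this rather than reprove it.

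\emph{Coregularity.} Now take any completion $(X,B',\bM')$ with $B'\ge P$. The argument above, applied to it, shows $B'=P$ and $\bM'\equiv0$ on a descending model. The completion is then the klt pair $(X,P)$ with a numerically trivial nef part, which changes no log discrepancies. Hence there are no lc places, the dual complex is empty of dimension $-1$, and the coregularity equals $\dim X-(-1)-1=\dim X$. Taking the minimum over all completions gives $\widehat{\operatorname{coreg}}(X,P)=\dim X$.
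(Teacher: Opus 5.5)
Your outline is the same as the paper's proof. You write $-(K_X+P)\sim_{\mathbb Q}(B-P)+\bM_X$ and use pseudo-effectivity to get $B=P$ and $\bM_X\num0$. You then analyse $E=f^*\bM_X-\bM_V\ge0$ on a descending model, invoke Gongyo for $K_X+P\sim_{\mathbb Q}0$, and note that every completion is then generalized klt, so its dual complex is empty.

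Using $H^{n-1}$-degrees of Weil divisors (with $n=\dim X$) instead of the paper's small $\mathbb Q$-factorialization is fine for getting $B=P$. However, a vanishing degree alone does not give $\bM_X\num0$. You need that, once $B=P$, the divisor $\bM_X\sim_{\mathbb Q}-(K_X+P)$ is $\mathbb Q$-Cartier, and both it and its negative are pseudo-effective. Then the fact that the pseudo-effective cone contains no line finishes this step.

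The genuine gap is the step forcing $E=0$, and this step cannot be skipped. With $B=P$ one has $B_V=P_V+E$, so the generalized log discrepancies are $A_{X,P}(D)-\coeff_D E$. Without $E=0$ you cannot exclude lc places of the completion.

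Your proposed test does not work in dimension at least three. The chain $0\le\bM_V^2\cdot(f^*H)^{n-2}=E^2\cdot(f^*H)^{n-2}\le0$, via the Hodge index theorem on a general surface section, only kills the components of $E$ whose image in $X$ has codimension two. Components over centres of codimension at least three are disjoint from $f^{-1}$ of a general complete-intersection surface, so every class built from $f^*H$ is blind to them. The negativity lemma cannot finish the job either: relative nefness of $-E$ is compatible with $E\ne0$, as the exceptional curve of a point blow-up shows.

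What you need is global nefness tested against a class that is positive on the exceptional locus. For $A$ ample on $V$ itself, $0\le\bM_V\cdot A^{n-1}=-E\cdot A^{n-1}$, and the right-hand side is negative if $E\ne0$. This is exactly the paper's argument. With it, the nef part descends, every completion has the klt discrepancies of $(X,P)$, and the rest of your proof goes through.
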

\begin{proof}
The case $\dim X=0$ is immediate. Pass to a small $\mathbb Q$-factorialization;
completions pull back and push forward crepantly with effective boundaries,
so absolute coregularity is unchanged. For any completion
$(X,B,\bM)$ we have
$-(K_X+P)\sim_{\mathbb Q}B-P+\bM_X$.
The trace of a b-nef divisor is pseudo-effective. The pseudo-effective
cone contains no line, so $K_X+P\num0$ and
$B-P+\bM_X\num0$. Since both summands are pseudo-effective,
they are numerically trivial. The effective divisor $B-P$ has zero
intersection with an ample complete-intersection curve, hence $B=P$;
also $\bM_X\num0$.
On a model $f:V\to X$ where $\bM$ descends,
$E=f^*\bM_X-\bM_V\ge0$ is exceptional by negativity.
If $E\ne0$, choose an ample divisor $H$ on $V$ and put $n=\dim V$.
Then
\[
 \bM_V\cdot H^{n-1}=-E\cdot H^{n-1}<0,
\]
contradicting nefness. Thus we get $E=0$.

The nef part descends, and the completion has the ordinary klt
discrepancies. Its dual complex is empty. This holds for every
completion, proving the coregularity claim. Finally, numerical-triviality
abundance~\cite[Theorem~1.2]{Gon13} gives $K_X+P\sim_{\mathbb Q}0$.
\end{proof} 

\subsection{Orbifold fundamental groups}
\begin{definition}\label{def:standard}
A divisor $D$ is said to have \textit{standard} coefficients, when the coefficients are in the set $\{1-\frac{1}{n}\}_{\{n \in \mathbb{N}\}}\cup \{1\}$.
For an effective $\mathbb{Q}$-divisor $D$, the \textit{standard approximation} $D_{\text{st}}$ is the largest effective divisor with standard coefficients, such that $D_{\text{st}}\leq D$.
\end{definition}
\begin{definition}\label{def:orb}
For a generalized  pair $(X,D,{\bf M})$ with $D_\text{st}=\sum_{j \in J} D_j + \sum_{i \in I} (1-\frac{1}{n_i})D_i$, we define the \textit{orbifold fundamental group of the smooth locus} $\pi_1^{\text{reg}}(X,D,{\bf M})$ to be 
$$
\pi_1(X_{\text{reg}}\setminus \text{supp}(D_{\text{st}}))/N,
$$
where $X_{\text{reg}}$ is the smooth locus of $X$ and $N$ is the normal subgroup of $\pi_1(X_{\text{reg}}\setminus \text{supp}(D_{\text{st}}))$ generated by $\gamma_i^{n_i}$, where $\gamma_i$ is a loop around $D_i$ for $i \in I$. 
\end{definition}

We will use the convention that $1=1-\frac{1}{\infty}$, thus for divisors $D_i$ with coefficient $1$, the loop $\gamma_i$ will not be a generator of $N$, in alignment with the previous definition.

\begin{theorem}[\cite{Bra21,BFMS20}]
\label{thm:fano}\label{thm-Bra21-gen-pair}
For each integer $r\ge1$ there is a positive integer $J(r)$ with the following property.
Let $(V,C,\bM)$ be a projective $\mathbb Q$-factorial generalized
klt pair of dimension $r$ such that $-(K_V+C+\bM_V)$ is nef and
big, and let $R\le C$ be an effective boundary with standard coefficients.
Then $\Gam(V,R)$ is finite and has an abelian subgroup of index
at most $J(r)$.
\end{theorem}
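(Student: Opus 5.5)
The plan is to reduce to an ordinary klt log Fano pair and then invoke the finiteness theorem of \cite{Bra21} together with the Jordan-type bound of \cite{BFMS20}.

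\textbf{Step 1: an ordinary klt log Fano boundary.} Since $V$ is $\mathbb Q$-factorial and $-(K_V+C+\bM_V)$ is nef and big, Kodaira's lemma lets us write
\[
 -(K_V+C+\bM_V)\sim_{\mathbb Q}A_k+\tfrac1k E
\]
with $E\ge0$ fixed and $A_k$ ample. For $k\gg0$ the generalized pair $(V,C+\tfrac1kE,\bM)$ is still generalized klt, and by construction $-(K_V+C+\tfrac1kE+\bM_V)\sim_{\mathbb Q}A_k$ is ample. Apply Lemma~\ref{lem-big-klt} to this pair, with ample divisor $A_k$ and a small rational $\epsilon$. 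This gives an ordinary klt boundary $\Delta\ge C+\tfrac1kE\ge C\ge R$ with
\[
 K_V+\Delta\sim_{\mathbb Q}-(1-\epsilon)A_k .
\]
Hence $(V,\Delta)$ is a klt log Fano pair, and $R\le\Delta$ with $R$ standard.

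\textbf{Step 2: covers of $(V,R)$ remain klt log Fano.} We cannot simply replace $R$ by $\Delta_{\st}$. Indeed $\Gam(V,\Delta_{\st})$ is only a quotient of $\Gam(V,R)$, so finiteness goes in the wrong direction. Instead we work with the covers of $(V,R)$ directly. Let $p\colon V'\to V$ be the finite Galois cover attached to a finite quotient of $\Gam(V,R)$. It is quasi-étale over $V_{\reg}$ away from $\Supp R$, and along a component of $R$ with coefficient $1-\tfrac1n$ it ramifies with index $m\mid n$. Define $\Delta'$ by $K_{V'}+\Delta'=p^*(K_V+\Delta)$. Along such a component, if $\Delta$ has coefficient $d\ge1-\tfrac1n$, then $\Delta'$ has coefficient $1-m(1-d)\in[0,1)$. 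Therefore $\Delta'\ge0$, the pair $(V',\Delta')$ is klt, and $-(K_{V'}+\Delta')$ is ample, so $(V',\Delta')$ is again klt log Fano. This is exactly the input of the argument of \cite{Bra21}, which we now run on $(V,R)$.

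\textbf{Step 3: finiteness and the Jordan bound.} Following \cite{Bra21}, use the finiteness of the local regional fundamental groups of klt singularities. Use also that klt log Fano pairs, hence all the $(V',\Delta')$ of Step~2, are rationally connected and so have simply connected smooth models. Together these give a uniform bound on the degree of the covers in Step~2. Hence the profinite completion of $\Gam(V,R)$ is finite, and the group itself is finite because it is residually controlled by these covers, as in \cite{Bra21}. For the index bound, $\Gam(V,R)$ acts faithfully on the universal such cover $V'$, a rationally connected klt variety of dimension $r$. The Jordan property for birational automorphism groups of rationally connected varieties of fixed dimension, as used in \cite{BFMS20}, then yields an abelian subgroup of index at most $J(r)$.

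The main obstacle is Step~2. One must verify that the orbifold structure given by $R$, rather than by $\Delta_{\st}$, still pulls $(V,\Delta)$ back to an effective klt log Fano boundary, and that the argument of \cite{Bra21} only uses this weaker compatibility. My first attempt is the coefficient computation above. If that is insufficient, the fallback is to replace $\Delta$ by $R+s(\Delta-R)$ for small $s$, after first checking that the anti-ampleness is preserved.
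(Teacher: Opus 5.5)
Your overall route is the paper's. You build an ordinary klt boundary $\Delta\ge C\ge R$ with $-(K_V+\Delta)$ ample from the generalized pair, then take finiteness from \cite{Bra21} and the index bound from the Jordan property as in \cite{BFMS20}. Step~1 is correct and slightly cleaner than the paper's version. Writing $L\sim_{\mathbb Q}A_k+\frac1kE$ with $A_k=\frac{k-1}{k}L+\frac1kA$ ample and perturbing by $\epsilon A_k$ via Lemma~\ref{lem-big-klt} gives $-(K_V+\Delta)\sim_{\mathbb Q}(1-\epsilon)A_k$ ample at once. The paper instead first obtains $(1-\epsilon)L$ nef and big and then needs a second perturbation. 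The coefficient computation in Step~2 is also correct. It makes explicit what the paper leaves implicit when it passes from ``$(V,R)$ is of log Fano type'' to \cite[Theorem~2]{Bra21}: every Galois cover attached to a finite quotient of $\Gam(V,R)$ crepantly pulls $(V,\Delta)$ back to a klt log Fano pair with effective boundary. Once finiteness is known, your index bound via the Jordan property for the finite group acting on the rationally connected Galois cover $V'$ is fine.

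Two points are wrong, however.

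\textbf{Step~2, direction of the quotient.} Your motivation is reversed. If the orders of $R$ divide those of $\Delta_{\st}$ (for instance, if $R$ merely omits components), then $\gamma^{n}=1$ implies $\gamma^{n'}=1$ for $n\mid n'$. So $\Gam(V,R)$ is a quotient of $\Gam(V,\Delta_{\st})$, and finiteness would transfer in the right direction. The real obstruction is that without divisibility (orders $2$ versus $3$, say) the two groups are incomparable. Moreover, $\Delta$ cannot in general be lowered so that $\Delta_{\st}=R$. On $\mathbb F_6$ with $R=\frac12\sigma$, anti-ampleness forces coefficient $>\frac23$ along the negative section $\sigma$, so your fallback $R+s(\Delta-R)$ fails.

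\textbf{Step~3, the finiteness mechanism.} This is the more serious problem: the argument you sketch does not work. Simple connectedness of a resolution of $V'$, together with finiteness of local regional groups, only tells you that $\pi_1(V'_{\reg})$ is normally generated by torsion elements. That concerns the subgroup corresponding to $V'$, not its index $|G|=\deg(V'\to V)$, so it gives no degree bound. A genuine degree bound would have to come from elsewhere, e.g.\ comparing $\mathrm{vol}(-(K_{V'}+\Delta'))=\deg p\cdot\mathrm{vol}(-(K_V+\Delta))$ with boundedness of $\epsilon$-lc Fano pairs. Even then it only makes the profinite completion finite. Your phrase ``residually controlled'' presupposes residual finiteness of $\Gam(V,R)$, which is not available a priori; this is exactly what Braun's local-to-global induction circumvents. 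As written, Step~3 is therefore a gap. It must be replaced by citing \cite[Theorem~2]{Bra21} as a black box, applied through your Step~2, which is precisely what the paper does.
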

\begin{proof}
Put $L=-(K_V+C+\bM_V)$ and write $L\sim_{\mathbb Q}A+E$ with
$A$ ample and $E\ge0$. Choose $\epsilon>0$ small enough that
$(V,C+\epsilon E,\bM)$ is generalized klt. Ordinary approximation
with the ample summand $\epsilon A$ gives a klt boundary
$\Delta\ge C+\epsilon E\ge R$ with
$-(K_V+\Delta)\sim_{\mathbb Q}(1-\epsilon)L$ nef and big.
An additional small Kodaira perturbation makes this divisor ample, increases $\Delta$ and preserves klt singularities.
Thus $(V,R)$ is of log Fano type. Finiteness follows from
\cite[Theorem~2]{Bra21}, and the uniform index bound from
\cite[Theorem~3]{BFMS20}.
\end{proof}

For curves, we get the following:

\begin{proposition}\label{prop-dim1}
For projective generalized klt curve pairs of finite absolute
coregularity, coregularity zero implies a finite regional orbifold
group, and coregularity one implies virtual abelianity of rank at
most two. These groups have abelian subgroups of index at most $60$.
\end{proposition}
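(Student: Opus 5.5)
We reduce the statement to the classical theory of orbifold curves, i.e.\ compact Riemann surfaces with finitely many cone points and punctures. Let $(X,D,\bM)$ be a projective generalized klt curve pair of finite absolute coregularity. Then $X$ is a smooth projective curve and $X_{\reg}=X$. Moreover, $\Gam(X,D,\bM)$ is the orbifold fundamental group of the orbifold curve $(X,D_{\st})$: points with coefficient $1-\frac1{n_i}$ are cone points of order $n_i$. Since the pair is klt, every coefficient of $D$ is less than $1$, so there are no punctures. Fix a completion $(X,B,\bM+\bN)$ with $B\ge D$ and $K_X+B+\bM_X+\bN_X\sim_{\mathbb Q}0$. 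On a curve every b-nef divisor descends to $X$ up to the normalization of the model, so $\bM_X+\bN_X$ has nonnegative degree. This gives
\[
 \deg(K_X+D_{\st})\le\deg(K_X+D)\le\deg(K_X+B)\le 0 .
\]
Thus the orbifold Euler characteristic $\chi^{\orb}=-\deg(K_X+D_{\st})$ is nonnegative, and the orbifold is spherical or Euclidean. This holds regardless of coregularity.

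We then split by the sign of $\chi^{\orb}$.

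If $\chi^{\orb}>0$, the orbifold is good or bad spherical. In the bad cases (teardrop or spindle) the group is finite cyclic. Otherwise $X\cong\mathbb P^1$ and the signature is one of $(n)$, $(n,m)$, $(2,2,n)$, $(2,3,3)$, $(2,3,4)$ or $(2,3,5)$. The group is then cyclic, dihedral, or one of $A_4,S_4,A_5$, so it has an abelian subgroup of index at most $60$ (the trivial subgroup of $A_5$, or better, the $\mathbb Z/5$ of index $12$).

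If $\chi^{\orb}=0$, the group is a Euclidean crystallographic group: either $\pi_1$ of an elliptic curve, giving $\mathbb Z^2$, or a triangle or quadrilateral group with signature $(2,2,2,2)$, $(3,3,3)$, $(2,4,4)$ or $(2,3,6)$. Each of these contains $\mathbb Z^2$ with index at most $6$. So in every case the group is virtually abelian of rank at most $2$, with an abelian subgroup of index at most $60$.

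It remains to show that coregularity zero forces finiteness, i.e.\ excludes the Euclidean case. Coregularity zero means that some completion $(X,B,\bM+\bN)$ has a nonempty dual complex, so $B$ has a coefficient-one point $p$, or some lc place lies over $X$. On a curve, the only possible lc places are points, since a curve has no exceptional divisors; so $\lfloor B\rfloor\ne0$. Because $(X,D)$ is klt, $\operatorname{coeff}_p D<1=\operatorname{coeff}_p B$, so $B-D_{\st}\ge B-D$ is nonzero at $p$. Hence
\[
 \deg(K_X+D_{\st})<\deg(K_X+B)\le0 ,
\]
so $\chi^{\orb}>0$ and the group is finite by the spherical case.

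The main obstacle is the bookkeeping about b-divisors on curves and in the degenerate signatures. One must check that the trace of a b-nef divisor on a curve has nonnegative degree; this holds because every birational model of a smooth curve is the curve itself. One must also handle bad orbifolds, whose $\pi_1$ is cyclic and therefore harmless. With these points settled, the $A_5$ case gives the uniform bound $60$.
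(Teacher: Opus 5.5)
Your proposal is correct and follows essentially the same route as the paper. Both arguments use the nonnegative degree of the nef part to get $\deg(K_X+D_{\mathrm{st}})\le 0$, read off the finite spherical groups (cyclic, dihedral, $A_4$, $S_4$, $A_5$) and the four Euclidean signatures together with the elliptic-curve case, and use a coefficient-one point of a coregularity-zero completion to make the inequality strict, which rules out the Euclidean case.
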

\begin{proof}
On a smooth curve the nef part has nonnegative degree. A completion
therefore forces either an elliptic curve with zero boundary and
degree-zero nef part, or $\mathbb P^1$ with $\deg D_{\st}\le2$.
In coregularity zero the completion has a coefficient-one point;
the klt boundary has strictly smaller degree, so
$\deg D_{\st}<2$. 

Write $D_{\st}=\sum_{i=1}^r(1-1/m_i)p_i$, with $m_i\ge2$.
The orbifold Euler characteristic on $\mathbb P^1$ is
$2-\deg D_{\st}$. If it is positive and $r\ge3$, the orbifold is
spherical: its signature $(m_1,\ldots,m_r)$, the list of orders at the points, is $(2,2,m)$, $(2,3,3)$, $(2,3,4)$ or $(2,3,5)$.
The corresponding groups are dihedral, tetrahedral, octahedral or
icosahedral. With at most two cone points, the presentation directly
gives a finite cyclic group (possibly trivial). In all these cases it is well known that
there is an abelian subgroup of index at most $60$.

If $\deg D_{\st}=2$, the orbifold Euler characteristic is zero,
and the Euclidean signatures are $(2,2,2,2)$, $(2,3,6)$,
$(2,4,4)$ and $(3,3,3)$; see~\cite[Proposition~5.3]{GLM23}.
The corresponding orbifold fundamental groups contain their translation
subgroups $\mathbb Z^2$ as normal subgroups of indices
$2,6,4,3$, respectively.
In the elliptic-curve case the boundary is zero, so the regional
orbifold fundamental group is the ordinary fundamental group of the
elliptic curve, namely $\mathbb Z^2$.
\end{proof}

For surfaces on the other hand, we have:

\begin{proposition} \label{prop-dim2}
    Let $(X,D)$ be a projective klt Calabi-Yau pair of dimension $2$, with standard coefficients, then $\pi_1^{\rm reg}(X,D)$ is virtually abelian of rank at most $4$. It has an abelian subgroup of index at most $3840$.
\end{proposition}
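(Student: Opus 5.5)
Since $D$ has standard coefficients and $K_X+D\sim_{\mathbb Q}0$, the plan is to pass to the log index-one cover and reduce to a surface with trivial canonical class. Let $s$ be the smallest positive integer with $s(K_X+D)\sim 0$; since $X$ is a surface with standard coefficients, $s$ is bounded, with $s\le 66$ by the known bound on the index of klt log Calabi-Yau surfaces with standard coefficients. Take the cyclic cover $p:Y\to X$ associated to a nowhere-vanishing section of $\mathcal O_X(s(K_X+D))$. It is branched exactly along $D$ with ramification index $n_i$ over a component of coefficient $1-1/n_i$ (klt forces finite $n_i$), and étale in codimension one elsewhere. Then $K_Y=p^*(K_X+D)\sim 0$ and $Y$ has canonical singularities, i.e.\ $Y$ has at worst ADE singularities.

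The cover corresponds to the surjection $\Gam(X,D)\to\mathbb Z/s\mathbb Z$, giving an exact sequence $1\to\pi_1(Y_{\reg}\setminus p^{-1}(\mathrm{Sing}))\to\Gam(X,D)\to\mathbb Z/s\to1$, up to removing finitely many points. Removing points from a smooth surface does not change $\pi_1$, so the kernel is $\pi_1(Y_{\reg})$. It therefore suffices to show that $\pi_1(Y_{\reg})$ has an abelian subgroup of index at most $3840/s$, or more crudely of bounded index, tracking the constant.

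For $Y$ a canonical surface with $K_Y\sim0$, the minimal resolution $\widetilde Y$ is an abelian, K3, Enriques or bielliptic surface; the latter two are ruled out by $K\sim0$ and $s$ minimal, or treated similarly. If $\widetilde Y$ is an abelian surface, then $Y=\widetilde Y$ is smooth and $\pi_1=\mathbb Z^4$, of rank $4$. If $\widetilde Y$ is a K3 surface, then $\pi_1(Y_{\reg})$ is finite, a result of Kollár / Xiao on quasi-étale covers of K3 surfaces with ADE singularities. Any quasi-étale Galois cover of $Y$ is again a K3 or abelian surface with ADE singularities. The K3 case gives finite groups from Mukai's list of symplectic actions, of order at most $960$; the abelian case gives at most the translation-free part, a group of order at most $16$ acting on $\mathbb Z^4$ as in the Kummer construction. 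Combining these gives $\pi_1(Y_{\reg})$ virtually $\mathbb Z^{r}$ with $r\in\{0,4\}$ and an abelian subgroup of index at most $960/s$-type bounds. Pulling back through the extension with Lemma~\ref{lem-virtual-groups} gives rank at most $4$ for $\Gam(X,D)$.

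The main obstacle is the explicit index $3840$, which requires controlling how the abelian subgroup survives the extension by $\mathbb Z/s$. I would take the normal abelian subgroup $A\le\pi_1(Y_{\reg})$ to be characteristic: the translation lattice in the torus case, via Lemma~\ref{lem:characteristic}-style arguments, or the trivial group when the K3 group is finite. Then $A$ is normal in $\Gam(X,D)$ and $[\Gam(X,D):A]\le s\cdot[\pi_1(Y_{\reg}):A]$. One checks $960\cdot 4=3840$ arising from the worst combination, namely Mukai's maximal group of order $960$ together with $s\le4$ for a non-symplectic cyclic action compatible with it. Verifying this constant case-by-case is the delicate step, and I would rely on the classification of finite symplectic automorphism groups of K3 surfaces together with bounds on the non-symplectic cyclic part.
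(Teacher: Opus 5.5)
Your route is different from the paper's. The paper's proof simply quotes \cite[Theorem~B]{GLM23}: $\Gam(X,D)$ is either finite of order at most $3840$, or it contains $\mathbb Z^4$ with index at most $96$. Reducing via the log index-one cover is reasonable, since it is what Proposition~\ref{prop:cy-cover} does. The case analysis after it, however, is wrong.

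It is false that $\pi_1(Y_{\reg})$ is finite whenever $\widetilde Y$ is a K3 surface. The singular Kummer surface $Y=A/\langle -1\rangle$ has $K_Y\sim0$, sixteen $A_1$ points and K3 minimal resolution, yet $\pi_1(Y_{\reg})\cong\mathbb Z^4\rtimes\mathbb Z/2$. The dichotomy must instead be whether $\pi_1(Y_{\reg})$ (equivalently $\Gam(X,D)$) is finite. The infinite case occurs exactly when $Y$ is a quasi-\'etale quotient of an abelian surface.

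Virtual abelianity must come from a result that actually proves it, for instance an orbifold Ricci-flat metric, or \cite[Theorem~1.3]{FGSW26} as in Proposition~\ref{prop:cy-cover}. A classification of finite quasi-\'etale Galois covers does not by itself control the topological group. Your holonomy bound ``order at most $16$'' is also wrong. The unit Hurwitz quaternions give a symplectic action of the binary tetrahedral group of order $24$ on $E\times E$ with $E=\mathbb C/\mathbb Z[i]$. Adding a non-symplectic part of order $4$, the holonomy reaches order $96$.

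The essential gap is the constant $3840$. Mukai's bound $960$ for the symplectic part and your $s\le 66$ together give only $|\Gam(X,D)|\le 63360$. The claim that $960\cdot 4$ is the worst case is precisely Kondo's theorem: a finite group of automorphisms of a K3 surface has order at most $3840$. Its proof controls the symplectic and non-symplectic parts jointly by lattice theory. It cannot be replaced by checking the extremal group $M_{20}$ alone, because smaller symplectic groups admit other values of $s$.

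To use Kondo's theorem, set up the finite case properly. $\Gam(X,D)$ is the Galois group of the orbifold universal cover $\bar Y\to X$. $\bar Y$ is a K3 surface with ADE singularities, and the action lifts to its minimal resolution. Hence $\Gam(X,D)$ embeds into the automorphism group of a smooth K3 surface, and Kondo gives $|\Gam(X,D)|\le3840$. Your inequality $[\Gam(X,D):A]\le s\,[\pi_1(Y_{\reg}):A]$ with separately bounded factors cannot give this.

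In the infinite case, $\Gam(X,D)$ is a complex crystallographic group on $\mathbb C^2$. Its translation lattice is a normal $\mathbb Z^4$ whose index equals the order of the holonomy group. That holonomy is a finite subgroup of $\mathrm{GL}_2(\mathbb C)$ preserving a rank-four lattice, and Fujiki's classification of finite automorphism groups of two-dimensional complex tori bounds it by $96$.

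With these inputs your argument becomes a genuine alternative to citing \cite{GLM23}. As written, only the rank bound is justified, and only after the dichotomy is corrected.
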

\begin{proof}
By~\cite[Theorem~B]{GLM23}, $\pi_1^{\rm reg}(X,D)$ is finite of order at most
$3840$, or has a subgroup $\mathbb Z^4$ of index at most $96$.
\end{proof}

\section{Fundamental Groups under Morphisms}
In this section we prove some results regarding the behavior of the fundamental group after performing certain birational maps and fibrations.

\begin{lemma}\label{lem-MMP-group}
Let $\phi:X\dashrightarrow X'$ be a birational contraction of
normal varieties, and let
$P'=\phi_*P$. Then
\[
 \Gam(X',P')\twoheadrightarrow\Gam(X,P).
\]
If $\phi$ is small, the groups are isomorphic.
\end{lemma}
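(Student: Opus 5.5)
The idea is to compare both groups through a common big open subset where $\phi$ is an isomorphism. Since $\phi$ is a birational contraction, $\phi^{-1}$ contracts no divisors. Hence there are open sets $U\subseteq X$ and $U'\subseteq X'$ with $\phi|_U:U\xrightarrow{\simeq}U'$ such that $\operatorname{codim}(X'\setminus U')\ge2$. The complement $X\setminus U$ may contain divisors, namely the $\phi$-exceptional prime divisors. I would shrink $U$ and $U'$ to lie in the smooth loci, which preserves codimension two for $X'$ because $X'$ is normal. Since $P'=\phi_*P$, the standard approximations agree on $U\cong U'$: $(P')_{\st}|_{U'}$ corresponds to $P_{\st}|_U$, because both are computed coefficientwise on the common prime divisors. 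The non-exceptional components of $P$ are exactly the components meeting $U$.

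\textbf{Step 1 (the $X'$ side).} By purity, removing a closed subset of codimension at least two from a smooth variety does not change $\pi_1$. Concretely, $\pi_1(U'\setminus\Supp P'_{\st})\to\pi_1(X'_{\reg}\setminus\Supp P'_{\st})$ is an isomorphism. The meridian loops $\gamma_i$ around components of $P'_{\st}$ can be chosen inside $U'$, since each component meets $U'$. Passing to the quotients by the normal subgroups $N$ of Definition~\ref{def:orb} therefore gives $\Gam(U',P'_{\st}|_{U'})\cong\Gam(X',P')$.

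\textbf{Step 2 (the $X$ side).} Here $U\setminus\Supp P_{\st}\subseteq X_{\reg}\setminus\Supp P_{\st}$ is an open inclusion of connected smooth varieties, so the induced map on $\pi_1$ is surjective. The complement consists of the $\phi$-exceptional divisors together with a codimension-two part. Removing the exceptional divisors only adds meridian generators, which become trivial or of finite order (if the divisor lies in $\Supp P_{\st}$) in the target. The relations $\gamma_i^{n_i}$ for components of $P_{\st}$ that meet $U$ map onto the corresponding relations downstairs. So the map descends to a surjection $\Gam(U,P_{\st}|_U)\twoheadrightarrow\Gam(X,P)$.

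Composing $\Gam(X',P')\cong\Gam(U',P'_{\st}|_{U'})\cong\Gam(U,P_{\st}|_U)\twoheadrightarrow\Gam(X,P)$ gives the claimed surjection.

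\textbf{Small case.} If $\phi$ is small, $\phi$ contracts no divisor either, so $X\setminus U$ also has codimension at least two. Then the Step 1 argument applies symmetrically on $X$, and the surjection is an isomorphism.

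The main point to get right is the bookkeeping of meridians. One must check that the kernel defining $\Gam$ on the open set maps onto the kernel on the big space, which holds since meridians of divisors meeting $U$ can be taken in $U$. One must also check that extra meridians around exceptional divisors only enlarge the kernel on the $X$ side. These two facts are what make the argument work, though I expect them to be routine.
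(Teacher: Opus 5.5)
Your proposal is correct and follows the paper's route. Both arguments compare the two groups on the common isomorphic open set, whose complement in $X'$ has codimension at least two. On $X$, filling in the contracted divisors only imposes additional relations, and in the small case both complements have codimension at least two.

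Your write-up is more detailed than the paper's. The only imprecision is the parenthetical in Step~2 saying exceptional meridians become "trivial or of finite order". It is unnecessary, since surjectivity of $\pi_1$ of a dense open subset already suffices. It is also inaccurate for exceptional components of coefficient one: these are already removed from $X_{\reg}\setminus\Supp P_{\st}$, and their meridians are not killed.
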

\begin{proof}
On the target, the complement of the common isomorphic open set has codimension at least two. 
On the source, for the contracted divisors, we get loop relations, including the
prescribed orbifold relations. 
The boundary coefficients agree on uncontracted divisors. For a small map both complements have
codimension at least two, so no additional relations occur.
\end{proof}

\begin{lemma}\label{lem-dlt-group}
Let $(X,P)$ be a projective $\mathbb Q$-factorial klt pair with
standard coefficients and let $(X,B,\bM)$ be a generalized lc
Calabi-Yau completion. Suppose $g:Y\dashrightarrow X$ is birational and every prime
divisor on $Y$ that is exceptional over $X$ is a generalized lc place of this
completion. 

For every extracted
divisor $E_i$, one can choose an arbitrarily large divisible integer
$q_i$ such that, with
\[
 R_Y=P_Y^{\mathrm{str}}+\sum_i(1-1/q_i)E_i,
\]
there is a surjection $\Gam(Y,R_Y)\twoheadrightarrow\Gam(X,P)$.
Here, $P_Y^{\mathrm{str}}$ includes the transforms of the components
of $P$ which survive on $Y$.
\end{lemma}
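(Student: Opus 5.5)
The plan is to compare both groups with the fundamental group of a common open set, and then check that every relation imposed on the $Y$ side also holds in $\Gam(X,P)$. Let $U\subseteq X$ be the largest open set over which $g^{-1}$ is an isomorphism onto its image, and set $U^\circ=U\cap X_{\reg}\setminus\Supp P$. Since $X$ is normal and $g$ is a birational contraction (every exceptional divisor of $g$ lies on $Y$ and is extracted), the complement of $U$ in $X$ has codimension at least two. Hence $\pi_1(U^\circ)\to\pi_1(X_{\reg}\setminus\Supp P)$ is surjective, by purity for removing codimension-two sets from the smooth locus. On $Y$, the set $g^{-1}(U^\circ)$ is an open subset of $Y_{\reg}\setminus\Supp R_Y$. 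Its complement consists of the divisors $E_i$ together with a set of codimension at least two in $Y$, plus possibly divisors of $Y$ mapping into $\Supp P$ or into the codimension $\ge2$ locus. The divisors of the last kind are exceptional, hence among the $E_i$.

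Therefore $\Gam(Y,R_Y)$ is the quotient of $\pi_1(g^{-1}(U^\circ))\cong\pi_1(U^\circ)$ by the following relations:
\begin{enumerate}
\item the orbifold relations $\gamma^{n}$ for the components of $P_Y^{\mathrm{str}}$;
\item the relations $\gamma_{E_i}^{q_i}$ for the extracted divisors;
\item the loops around the non-exceptional components of $Y\setminus(Y_{\reg}\cap g^{-1}(U^\circ))$ of codimension one, which are not in the boundary.
\end{enumerate}
$\Gam(X,P)$ is likewise the quotient of $\pi_1(U^\circ)$ by the orbifold relations of $P$ and by the loops around the codimension-one parts of $X\setminus U$. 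There are none of the latter, since $X\setminus U$ has codimension at least two. To obtain the surjection $\Gam(Y,R_Y)\twoheadrightarrow\Gam(X,P)$, I will show that every relation in the list above maps to the identity in $\Gam(X,P)$.

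Relations (1) are the images of the corresponding relations on $X$, because the coefficients agree on strict transforms. Relations (3) do not occur: non-exceptional divisors of $Y$ meet $g^{-1}(U)$, so they are not removed. The only difficulty is (2), the loop $\gamma_{E_i}$ around an extracted divisor $E_i$. Its image in $\Gam(X,P)$ is a loop in a punctured neighbourhood of the centre $c_X(E_i)$, so it lies in the image of the local group $\Gam$ of a small neighbourhood of a point of $c_X(E_i)$, with the boundary $P$. Here I use that $(X,P)$ is klt with standard coefficients. By the local finiteness of~\cite{Bra21} (the local version of Theorem~\ref{thm:fano}), the local regional orbifold fundamental group at any point of $X$ is finite. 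Let $N_i$ be its order, or the order of the image of $\gamma_{E_i}$ in it. Then $\gamma_{E_i}^{N_i}$ is trivial in $\Gam(X,P)$, so any $q_i$ divisible by $N_i$ works. Taking $q_i$ to be an arbitrary multiple of $N_i$ (or of the lcm over a chosen point of each centre) gives the arbitrarily large divisible choice.

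The main obstacle is making the local step rigorous. First, the image of $\gamma_{E_i}$ must actually factor through the local group at a point of the centre. Second, the local group must be finite even though the centre may be positive-dimensional; for this I would work at a general point of $c_X(E_i)$ and use that the local fundamental group there is finite by~\cite{Bra21}, applied to the klt germ $(X,P)$. The lc-place hypothesis on $E_i$ is not needed for the surjection itself. It only serves later, to ensure $(Y,R_Y)$ still has a suitable completion, and for that the coefficients $1-1/q_i$ must stay below one, which they do for finite $q_i$.
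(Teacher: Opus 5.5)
Your proposal is correct, and its key step is the same as the paper's: a meridian of $E_i$ maps into the image of a regional local fundamental group of the klt pair $(X,P)$. That group is finite by \cite{Bra21}, so any $q_i$ divisible by the order of the image of the meridian in $\Gam(X,P)$ works.

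The difference is in the bookkeeping. The paper first takes, via Lemma~\ref{lem-BCHM-generalized-pairs}, a projective $\mathbb Q$-factorial extraction $h\colon Y_0\to X$ of exactly the $E_i$. This is the only place the lc-place hypothesis is used. It then proves $\Gam(Y_0,R_{Y_0})\twoheadrightarrow\Gam(X,P)$ for the morphism $h$, and applies Lemma~\ref{lem-MMP-group} to the birational contraction $Y_0\dashrightarrow Y$. You instead compare both groups directly on the common open set $U^\circ$.

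The paper's detour buys a genuine morphism, so a meridian visibly lands near a point of $h(E_i)$. It also absorbs components of $P$ that do not survive on $Y$ through Lemma~\ref{lem-MMP-group}. Your route avoids \cite{BCHM10} and shows that neither the lc-place hypothesis nor the completion is needed for the surjection.

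The price is the step you flag as the main obstacle, which you should close explicitly:
\begin{itemize}
\item Since $Y$ is normal and $X$ is projective, $g$ is regular at the generic point of each $E_i$.
\item So at a general point $y\in E_i$, the map $g$ is a morphism, and $E_i$ is the only component of $Y\setminus g^{-1}(U^\circ)$ through $y$.
\item Hence a small meridian around $E_i$ at $y$ lies in $g^{-1}(U^\circ)$ and maps into $N\cap U^\circ$ for an arbitrarily small neighbourhood $N$ of $g(y)$.
\end{itemize}
Since \cite{Bra21} gives finiteness of the local group at every point of $X$, you do not need $g(y)$ to be a general point of the centre.

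Finally, your assertion that $g$ is a birational contraction is not among the hypotheses; the statement explicitly allows components of $P$ to disappear on $Y$. So $X\setminus U$ may contain divisors. This is harmless: such divisors only add relations to $\Gam(X,P)$. Moreover, $\pi_1(U^\circ)\to\Gam(X,P)$ is still surjective, because $U^\circ$ is a dense open subset of $X_{\reg}\setminus\Supp P$. So every relation on the $Y$-side still dies in $\Gam(X,P)$, and the map you construct is onto.
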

\begin{proof}
By Lemma~\ref{lem-BCHM-generalized-pairs}, take a projective
extraction $h:Y_0\to X$ of exactly the divisors extracted by $g$.
The map $Y_0\dashrightarrow Y$ extracts no divisors. On the
complement of the exceptional locus, the fundamental groups agree with the
corresponding fundamental group on $X$ after removing codimension-two subsets.
The image in $\Gam(X,P)$ of a loop around each $E_i$ has finite
order: it is the image of a regional local orbifold fundamental group of the
klt pair $(X,P)$, which is finite by~\cite[Theorem~1]{Bra21}.
Choose $q_i$ divisible by this order. Imposing the $q_i$-th power
relations gives a surjection from $\Gam(Y_0,R_{Y_0})$.
Lemma~\ref{lem-MMP-group} then gives the surjection from $Y$.
\end{proof}

\begin{lemma}\label{lem-Fano-extraction}
Let $(F,P_F)$ be a projective $\mathbb Q$-factorial klt pair such that
$-(K_F+P_F)$ is big and nef, and let $(F,B_F,\bM)$ be a generalized
Calabi-Yau completion. Let $\phi:F_0\to F$ be a $\mathbb Q$-factorial
extraction of generalized log canonical places of $(F,B_F,\bM)$.
Define $R_{F_0}=\phi^{-1}_*(P_F)+E$, where $E\ge0$ is a
$\phi$-exceptional divisor with standard coefficients smaller than one,
and assume $P_F$ has standard coefficients.
Then $\Gam(F_0,R_{F_0})$ is finite and has an abelian subgroup of
index at most $J(\dim F)$.
\end{lemma}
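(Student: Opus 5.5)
The plan is to show that $(F_0,R_{F_0})$ is dominated by a generalized klt pair on $F_0$ whose log anticanonical class is nef and big, and then to apply Theorem~\ref{thm:fano} with $R=R_{F_0}$.

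First I transport the data to $F_0$. Write the crepant pullbacks
\[
K_{F_0}+P_{F_0}=\phi^*(K_F+P_F),\qquad K_{F_0}+B_{F_0}+\bM_{F_0}=\phi^*(K_F+B_F+\bM_F)\sim_{\mathbb Q}0.
\]
Every $\phi$-exceptional divisor is a generalized lc place of $(F,B_F,\bM)$, so it has coefficient $1$ in $B_{F_0}$. Hence $B_{F_0}\ge0$ and $(F_0,B_{F_0},\bM)$ is generalized lc. Since $(F,P_F)$ is klt, the coefficients of exceptional divisors in $P_{F_0}$ are $<1$, but they may be negative. For this reason I do not use $P_{F_0}$ directly. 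Instead I interpolate as in Lemma~\ref{lem:interpolation}. For rational $0<t\le1$ set
\[
C_t=(1-t)B_{F_0}+tP_{F_0},\qquad \bM_t=(1-t)\bM .
\]
On $F_0$ the generalized log discrepancies of $(F_0,C_t,\bM_t)$ are $(1-t)A_{B,\bM}+tA_{P}>0$, so the pair is generalized klt wherever $C_t\ge0$. Moreover,
\[
-(K_{F_0}+C_t+\bM_{t,F_0})\sim_{\mathbb Q}-t\,\phi^*(K_F+P_F),
\]
which is nef and big because $-(K_F+P_F)$ is nef and big and $\phi$ is birational.

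Next I check effectivity and domination. On non-exceptional components, $C_t$ has coefficient $(1-t)b+tp\ge p$, since $B_F\ge P_F$; this dominates $\phi^{-1}_*P_F$. On an exceptional $E_i$ the coefficient is $(1-t)+t\,p_i$ with $p_i<1$. For $t$ small this lies in $(0,1)$ and tends to $1$. The coefficients of $E$ are standard and strictly less than $1$, and there are finitely many components, so for $t$ small enough
\[
\coeff_{E_i}C_t\ge\coeff_{E_i}E .
\]
This is where I use that the coefficients of $E$ are $<1$. Thus $C_t\ge R_{F_0}$ and $C_t\ge0$, and $R_{F_0}$ has standard coefficients by hypothesis.

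Finally I apply Theorem~\ref{thm:fano} to $(F_0,C_t,\bM_t)$ with $R=R_{F_0}$. This pair is projective, $\mathbb Q$-factorial and generalized klt with nef and big log anticanonical class. The theorem gives that $\Gam(F_0,R_{F_0})$ is finite with an abelian subgroup of index at most $J(\dim F_0)=J(\dim F)$. The main obstacle is the generalized klt check: one has to confirm that the log discrepancy formula of Lemma~\ref{lem:interpolation} remains valid on $F_0$, in particular that $\bM_t$ is still b-nef. It is, since $\bM_t=(1-t)\bM$ is a nonnegative multiple of $\bM$. Positivity of the log discrepancies on every higher model then follows because $A_P>0$ there, as $(F,P_F)$ is klt.
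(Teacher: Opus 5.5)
Your proposal is correct and takes the same route as the paper's proof. The paper also interpolates $(1-\alpha)B_{F_0}+\alpha P_{F_0}$ with nef part $(1-\alpha)\bM$ and notes that the log anticanonical class is $\sim_{\mathbb Q}-\alpha\phi^*(K_F+P_F)$. It then uses that every exceptional coefficient of $B_{F_0}$ equals one, so the fixed standard coefficients of $E$ are dominated for small $\alpha$, and applies Theorem~\ref{thm:fano}. Your explicit checks that $C_t\ge0$, and that $B_F\ge P_F$ follows from the definition of a completion, are points the paper leaves implicit.
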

\begin{proof}
Let $B_{F_0}$ and $P_{F_0}$ be defined by the equalities
\[
 \phi^*(K_F+B_F+\bM_F)=K_{F_0}+B_{F_0}+\bM_{F_0},
 \qquad \phi^*(K_F+P_F)=K_{F_0}+P_{F_0}.
\]
For any $0<\alpha<1$, the divisor
$-K_{F_0}-(1-\alpha)B_{F_0}-\alpha P_{F_0}-(1-\alpha)\bM_{F_0}$
is big and nef. Indeed,
\[
 -\bigl(K_{F_0}+(1-\alpha)B_{F_0}+\alpha P_{F_0}
 +(1-\alpha)\bM_{F_0}\bigr)
 \sim_{\mathbb Q}-\alpha\phi^*(K_F+P_F).
\]
The sub-pair $(F_0,P_{F_0})$ is sub-klt. Since $\phi$ only extracts
glc places of $(F,B_F,\bM)$, the pair $(F_0,B_{F_0},\bM)$ is
generalized log canonical with $B_{F_0}\ge P_{F_0}$.
Hence for $\alpha$ close enough to zero, the pair
$(F_0,(1-\alpha)B_{F_0}+\alpha P_{F_0},(1-\alpha)\bM)$ is gklt
by pulling back the interpolation of Lemma~\ref{lem:interpolation},
is weakly Fano, and has
$(1-\alpha)B_{F_0}+\alpha P_{F_0}\ge R_{F_0}$.
Note that every exceptional coefficient of $B_{F_0}$ is one, so any fixed
exceptional coefficients smaller than one are allowed.
Therefore by Theorem~\ref{thm-Bra21-gen-pair} we get the finiteness claim and the
stated uniform index bound.
\end{proof}

\begin{lemma}\label{lem-dlt-mod-Fano-fibration}
Let $f:Z\to W$ be a Fano type fibration. Let $(Z,B,\bM)$ be a
generalized log Calabi-Yau pair and $(W,B_W,\bN)$ the generalized
pair induced by the canonical bundle formula. Let $W'\to W$ be a
$\mathbb Q$-factorial generalized dlt modification of $(W,B_W,\bN)$.
Then there exists a $\mathbb Q$-factorial generalized dlt model
$(Z',B',\bM)$ and a birational contraction $g:Z'\dashrightarrow Z$,
whose exceptional divisors are generalized log canonical places of
$(Z,B,\bM)$, together with a Fano type fibration $f':Z'\to W'$
making the following diagram commute:
\[
\begin{tikzcd}
 (Z,B,\bM)\arrow[d,"f"] &
 (Z',B',\bM)\arrow[l,dashed,"g"']\arrow[d,"f'"]\\
 (W,B_W,\bN) & (W',B_{W'},\bN)\arrow[l].
\end{tikzcd}
\]
Here, for every prime divisor of $Z$, there is a prime strict transform
on $Z'$. On general fibres $F'$ and $F$, every divisor of $F$
therefore survives on $F'$, and the additional divisors on $F'$
are lc places of the restricted completion.
\end{lemma}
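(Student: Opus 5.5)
The plan is to build $W'$ first, lift its divisorial structure to $Z$, and then run a relative MMP over $W'$ to recover a morphism. Start with a $\mathbb Q$-factorial generalized dlt modification $h:(Z_1,B_1,\bM)\to(Z,B,\bM)$. It extracts only generalized lc places and is crepant, so $K_{Z_1}+B_1+\bM_{Z_1}\sim_{\mathbb Q}0$. The induced map $Z_1\dashrightarrow W'$ is only rational. Each exceptional divisor $E$ of $W'\to W$ is a generalized lc place of $(W,B_W,\bN)$, with coefficient one in $B_{W'}$. By the definition of the discriminant in Proposition~\ref{prop:cbf}, its generalized lc threshold is zero. Hence there is a generalized lc place $E_Z$ of $(Z,B,\bM)$ dominating $E$, namely a component of the pullback of $E$ on a high model with coefficient one.

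Next, take a high log resolution $V\to Z$ that also dominates $W'$, and put $B_V$ so that the pair is crepant to $(Z,B,\bM)$. Replace the negative part and the non-lc-place exceptional coefficients by the usual dlt-model boundary: set coefficient one on all exceptional divisors and keep the strict transform of $B$. Then run a $(K_V+B_V'+\bM_V)$-MMP over $W'$. Here $B_V'-B_V\ge0$ is exceptional over $Z$, and $K_V+B_V'+\bM_V\sim_{\mathbb Q,W'}B_V'-B_V$. On the fibres over $W'$ this divisor is very exceptional except over the divisors of $W'$, and the relevant divisors there are lc places, so they are not contracted.

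To make the MMP terminate, I would use that $Z\to W$ is of Fano type, so $V$ is of Fano type over $W'$ after perturbation, in the spirit of Lemma~\ref{lem-big-klt}. Then \cite{BCHM10} applies through an ordinary klt approximation. The negativity lemma, in the form for very exceptional divisors, shows that the MMP contracts exactly the components of $B_V'-B_V$. The output $Z'$ is therefore crepant to $(Z,B,\bM)$, $\mathbb Q$-factorial, generalized dlt, and carries a morphism $f':Z'\to W'$. Its exceptional divisors over $Z$ are generalized lc places, since the non-lc places have been contracted. No divisor of $Z$ is contracted: the strict transforms have the coefficient of $B$ in $B_V'-B_V$, which is zero, so they are not in the stable base locus. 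This gives the prime strict transforms. Fano type of $f'$ follows because $Z'$ is obtained from a Fano type (over $W'$) model by an MMP, and the Fano type property is preserved. Contraction is also preserved, with connected fibres, since $Z'\to W'$ is birational to the Stein factorization of $Z\to W$. Generic fibres of $Z'\to W'$ and $Z\to W$ are birational because $W'\to W$ is an isomorphism over a dense open set. The statement about the fibres $F'$ and $F$ follows by restricting the surviving strict transforms and the extracted lc places to a general fibre, where adjunction keeps the extracted divisors as lc places of $(F,B|_F,\bM|_F)$.

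I expect the main obstacle to be the very-exceptional condition, namely proving that the MMP over $W'$ contracts all of $B_V'-B_V$ and no more. Divisors of $V$ mapping onto divisors of $W'$ may have vertical components that are not very exceptional. I would first try to handle this with the canonical bundle formula on $W'$, using the fact that $W'$ is a gdlt model: over each prime divisor of $W'$, some component has coefficient one. I would then argue via \cite{FS23} that the remaining vertical components can be adjoined with coefficient one while keeping the pair generalized lc, which makes the difference very exceptional.
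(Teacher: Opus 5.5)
Your construction is the one in the paper. You take a log resolution $V\to Z$ dominating $W'$ on which $\bM$ descends, with boundary the strict transform of $B$ plus the reduced exceptional divisor, so that $K_V+B'_V+\bM_V\sim_{\mathbb Q}E=\sum_T A_{Z,B,\bM}(T)\,T\ge0$. You then run an MMP over $W'$, observe that the divisors of $Z$ survive because they lie outside $\Supp E$ (your base-locus phrasing is equivalent to the paper's curve argument), and restrict to a general fibre. The paper does not reprove that this MMP terminates, contracts all of $\Supp E$, and ends on a model of Fano type over $W'$; it takes these facts from the first stage of \cite[Lemma~2.36]{Mor21a}.

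\textbf{The step that fails.} Where you supply your own justification, you assert that $V$ is of Fano type over $W'$. In general it is \emph{not}, and no perturbation of the boundary can change that, since it is a property of $V$ itself. For instance, let $C$ be an elliptic curve, $Z=\mathbb P^2\times C\to W=C$, $\bM=0$. Let $B$ be the pullback of $\frac1{15}\sum L_{ij}$, where the $L_{ij}$ are the $45$ lines through pairs of ten very general points $p_1,\dots,p_{10}\in\mathbb P^2$. Then $(Z,B)$ is klt, $K_Z+B\sim_{\mathbb Q}0$ and $W'=W$. However, a general fibre of any log resolution maps birationally onto the blow-up of $\mathbb P^2$ at the $p_i$, which is not a Mori dream space. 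Since Fano type restricts to general fibres and descends along birational contractions, $V$ is not of Fano type over $C$. Hence neither your termination argument via \cite{BCHM10} nor your proof that $f'$ is of Fano type goes through.

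\textbf{How to repair it.} The Fano type property has to be proved for the output. $Z'$ extracts only generalized lc places of $(Z,B,\bM)$. Let $\Gamma$ be a klt complement of $Z\to W$ containing a general relatively ample divisor. Transporting $(Z,(1-t)B+t\Gamma,(1-t)\bM)$ crepantly to $Z'$ then gives, for small $t$, a generalized klt structure on $Z'$ which is $\mathbb Q$-trivial over $W$ with effective big boundary, as in Lemma~\ref{lem:interpolation} and the proof of Lemma~\ref{lem-Fano-extraction}. The approximation of Lemma~\ref{lem-big-klt} then makes $Z'$ of Fano type over $W$, hence over $W'$, since the generic fibres agree. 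Termination should come from the MMP with scaling. If the scaling constants stay bounded away from zero, \cite{BCHM10} applies after klt approximation. If they tend to zero, the negativity argument below forces the transform of $E$ to vanish, so the MMP stops.

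\textbf{The very-exceptional step.} This is the point you flag as the main obstacle, and the very-exceptional negativity lemma only sees the vertical part of $E$. Components of $E$ dominating $W'$, such as the divisors over $\{p_i\}\times C$ above, are never very exceptional. They are handled on the generic fibre over $W'$, where $V_\eta\to Z_\eta$ is birational and the ordinary negativity lemma applies.

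For the vertical part your idea is the right one, but it must be applied prime by prime on $W'$; not every prime divisor of $W'$ has a coefficient-one divisor over it.
\begin{itemize}
\item Over a prime of $W'$ that is not exceptional over $W$, the strict transform of a divisor of $Z$ dominating it lies outside $\Supp E$.
\item Over an exceptional prime $P$, the vanishing of the generalized lc threshold $t_P$ gives a coefficient-one divisor over $P$. This divisor lies outside $\Supp E$ only if $V$ is chosen so that $t_P$ is computed on $V$. One way is to make the pullbacks of the exceptional divisors of $W'$ simple normal crossing together with the boundary.
\end{itemize}
The fallback via \cite{FS23} of adjoining further vertical components with coefficient one is unnecessary. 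Raising coefficients of divisors of $Z$ would also break crepancy with $(Z,B,\bM)$. Finally, the preliminary gdlt modification $Z_1$ plays no role in your argument.
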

\begin{proof}
We use the first stage of the construction in
\cite[Lemma~2.36]{Mor21a}. Start with a log resolution $p:Z_0\to Z$
dominating $W'$, on which $\bM$ descends. Write
$K_{Z_0}+B_0+\bM_{Z_0}=p^*(K_Z+B+\bM_Z)$, and let $B_0^+$
be the strict transform of $B$, plus the reduced $p$-exceptional divisor.
Now let
\[
 E:=B_0^+-B_0=\sum_{T\text{ exceptional}/Z}
 A_{Z,B,\bM}(T)T\ge0,\qquad
 K_{Z_0}+B_0^++\bM_{Z_0}\sim_{\mathbb Q}E.
\]
The relative MMP over $W'$ in that construction contracts
$\operatorname{Supp}E$ and produces a generalized dlt model
of Fano type over $W'$. At each step its MMP divisor is
$\mathbb Q$-linearly equivalent to the effective transform $E_i$.
A divisorial contraction cannot contract a divisor outside
$\operatorname{Supp}E_i$: a contracted curve through a general point
of that divisor is not contained in $\operatorname{Supp}E_i$, and
hence has nonnegative intersection with $E_i$, contrary to negativity
of the contracted ray. Thus every
original divisor of $Z$ survives, and every remaining divisor
exceptional over $Z$ has discrepancy zero.

We do not perform the additional modifications arranging the inverse
image of the non-klt locus as in~\cite[Lemma~2.36]{Mor21a}.

Finally, restricting to a sufficiently general fibre gives the last assertion. 
This can be checked on the same resolution, where horizontal
exceptional divisors restrict to exceptional divisors and keep
their discrepancy coefficients. All remaining exceptional and
indeterminacy loci meet a sufficiently general fibre in codimension
at least two.
\end{proof}

\begin{lemma}\label{lem-fibration}
Let $f:Z\to W$ be a projective contraction of normal varieties and
let $R$ be a boundary with standard coefficients smaller than one.
For a prime divisor $E$ on $W$, write, over its generic point,
\[
 f^*E=\sum_j m_{E,j}T_{E,j},\qquad
 \coeff_{T_{E,j}}R=1-1/n_{E,j}.
\]
Set $k_E=\gcd_j(m_{E,j}n_{E,j})$ and
$R_W=\sum_E(1-1/k_E)E$. If $T_{E,j}$ is not a component of
$\operatorname{Supp}R$, its coefficient in $R$ is zero, so $n_{E,j}=1$.
There is a smooth open subset $V\subset W$ with complement of
codimension at least two such that, for $U=f^{-1}(V)$ and a general
fibre $F$, we have an exact sequence
\[
 \Gam(F,R|_F)\longrightarrow\Gam(U,R|_U)
 \longrightarrow\Gam(W,R_W)\longrightarrow1.
\]
Moreover, $\Gam(U,R|_U)$ surjects onto $\Gam(Z,R)$.
\end{lemma}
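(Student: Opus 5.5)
We construct $V$ by discarding bad loci, then obtain the exact sequence from the fibration structure over $V$ together with local monodromy along each divisor. First let $W^\circ\subset W_{\reg}$ be a dense open set over which $f$ is smooth. It should also be topologically locally trivial with respect to $\Supp R$ and $Z_{\sing}$, which holds after stratifying. The complement $W\setminus W^\circ$ consists of finitely many prime divisors $E$ and a closed set of codimension at least two. Over the generic point of each such $E$ we shrink further. We remove from $W$ the singular locus of $W$ and the images of all codimension-$\ge2$ pieces of $Z_{\sing}$ and of the non-normal-crossing loci that do not dominate a divisor. We also remove the images of $f$-exceptional divisors, meaning components of $f^{-1}(E)$ whose image has codimension at least two. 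Finally we remove the points of each $E$ where $E$ is singular or meets another such divisor. Call the result $V$. Its complement has codimension at least two, $V$ is smooth, and over $V\cap E$ the divisor $f^*E$ is, locally in the analytic topology, a union of the $T_{E,j}$ with multiplicities $m_{E,j}$.

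For the exact sequence, set $U^\circ=f^{-1}(V\cap W^\circ)\cap Z_{\reg}\setminus \Supp R$. The map $U^\circ\to V\cap W^\circ$ is a locally trivial fibration with fibre $F^\circ=F_{\reg}\setminus\Supp R|_F$, so the homotopy sequence gives $\pi_1(F^\circ)\to\pi_1(U^\circ)\to\pi_1(V\cap W^\circ)\to1$. Since $f$ has connected fibres, $\pi_0(F^\circ)$ is trivial.

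Next we pass to orbifold quotients. In $\pi_1(U^\circ)$ we impose the relations $\gamma_T^{n_T}$ for horizontal components of $R$, which are exactly the relations defining $\Gam(F,R|_F)$ on the fibre. We also impose the relations $\gamma_{T_{E,j}}^{n_{E,j}}$ and fill in the vertical divisors with $n_{E,j}=1$ to pass from $\pi_1(U^\circ)$ to $\Gam(U,R|_U)$. Near a general point of $E$ and of $T_{E,j}$, a loop $\gamma_{T_{E,j}}$ maps to $\gamma_E^{m_{E,j}}$ in $\pi_1(V\cap W^\circ)$. Hence the image of the relation group is normally generated by the elements $\gamma_E^{m_{E,j}n_{E,j}}$. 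The subgroup these generate at each $E$ is $\langle\gamma_E^{k_E}\rangle$, using $\gcd$ and Bézout. Filling in the loops around $E$ only up to these powers therefore gives exactly $\Gam(V,R_W|_V)=\Gam(W,R_W)$, where the last equality holds because the complement of $V$ has codimension two in $W$.

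The main obstacle is exactness in the middle after the quotients. We must show that any element of $\Gam(U,R|_U)$ mapping trivially to $\Gam(W,R_W)$ comes from the fibre. Any element of the kernel is a product of conjugates of $\gamma_E^{k_E}$. By Bézout, $\gamma_E^{k_E}$ lifts to a product of $\gamma_{T_{E,j}}^{m_{E,j}n_{E,j}}$-type lifts, and each of these is trivial in $\Gam(U,R|_U)$. Lifting carefully along a small disc transverse to $E$, the lifts differ only by fibre classes. So the kernel equals the image of the fibre group. Here we need the local product structure of $f$ over the generic point of $E$, which holds after shrinking and after removing the non-reduced loci in codimension two from $V$.

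The final surjection $\Gam(U,R|_U)\twoheadrightarrow\Gam(Z,R)$ holds because $U\cap Z_{\reg}$ is open in $Z_{\reg}$. Removing $Z\setminus U$ removes closed subsets together with divisors. Removing a divisor only adds loop generators, which are killed again in $\Gam(Z,R)$, and removing a codimension-two set does not change $\pi_1$. So the inclusion induces a surjection, just as in Lemma~\ref{lem-MMP-group}.
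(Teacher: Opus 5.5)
Your proposal is correct and follows essentially the same route as the paper. It uses the homotopy sequence of a generic topological fibre bundle (the paper cites Nori's lemma) and the horizontal relations to produce $\Gam(F,R|_F)$. Vertical loops map to conjugates of $\gamma_E^{m_{E,j}}$, so the relations generate $\langle\langle\gamma_E^{k_E}\rangle\rangle$, and middle exactness follows from a diagram chase with the Bézout lift $\prod_j\eta_{E,j}^{a_{E,j}n_{E,j}}$.

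Two corrections. First, the lifted elements should be $\gamma_{T_{E,j}}^{n_{E,j}}$, which map to $\gamma_E^{m_{E,j}n_{E,j}}$, not $\gamma_{T_{E,j}}^{m_{E,j}n_{E,j}}$. Second, no local product structure over the generic point of $E$ is needed, and it fails in general, e.g.\ for multiple fibres; the chase only uses exactness over $V\cap W^\circ$.
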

\begin{proof}
Remove from the base its singular locus and the images of those prime
divisors on $Z$ whose images in $W$ have codimension at least two. Denote the resulting big open subset by $V$ and put
$U=f^{-1}(V)$. By~\cite[Lemma~1.5.A]{Nor83}, we can choose a smaller
nonempty open set $V^\circ\subset V$, with divisorial complement, such that
\[
 U^\circ:=f^{-1}(V^\circ)_{\reg}\setminus\Supp R\longrightarrow V^\circ
\]
is a topological fibre bundle. By generic flatness and generic smoothness,
we may assume its connected fibre is $F_{\reg}\setminus\Supp(R|_F)$,
and that $V^\circ$ avoids the images of the vertical boundary components.

Now, the loops of horizontal boundary components are represented, up to
conjugation, by loops in a general fibre. 
Quotienting the homotopy sequence by their prescribed orbifold relations 
therefore gives an exact sequence
\[
 \Gam(F,R|_F)\xrightarrow{\psi_1}H\xrightarrow{\psi_2}Q\longrightarrow1,
\]
where $H=\pi_1(U^\circ)/N_{\mathrm{hor}}$ and $Q=\pi_1(V^\circ)$.

As noted by~\cite[Fact~1.2]{Nor83}, reinserting a divisor $T_{E,j}$ with its
orbifold coefficient is nothing else than imposing the relation $\gamma_{E,j}^{n_{E,j}}=1$.
At a general smooth point, the image of its loop is conjugate to
$\delta_E^{m_{E,j}}$, where $\delta_E$ is a loop around $E$.
Thus, these relations map to powers $\delta_E^{m_{E,j}n_{E,j}}$ and
their normal closure is generated by $\delta_E^{k_E}$: each exponent
is divisible by $k_E$, and B\'ezout's identity expresses $k_E$ as
an integral linear combination of these exponents.
All components of $U_{\reg}\setminus U^\circ$ reinserted at this step,
other than the horizontal boundary components already treated, dominate divisors of $V$;
any removed subsets of codimension at least two in $U_{\reg}$ do not
change the fundamental group. We obtain a commutative diagram
\[
\begin{tikzcd}[column sep=small]
 \Gam(F,R|_F)\arrow[r,"\psi_1"]\arrow[d,equal]&
 H\arrow[r,"\psi_2"]\arrow[d,"\phi_X"]&
 Q\arrow[r]\arrow[d,"\phi_C"]&1\\
 \Gam(F,R|_F)\arrow[r,"\phi_1"]&
 \Gam(U,R|_U)\arrow[r,"\phi_2"]&
 \Gam(W,R_W)\arrow[r]&1.
\end{tikzcd}
\]
The vertical maps are surjective by the definition of the orbifold
fundamental groups of pairs.

Let $\alpha\in\Gam(U,R|_U)$ such that $\phi_2(\alpha)=1$.
We take $\alpha'$ a preimage of $\alpha$ by $\phi_X$. Then
$\psi_2(\alpha')\in\ker\phi_C$, so it is a product of conjugates of
$\delta_E^{k_E}$ and their inverses.
Loops around a divisor are well-defined up to conjugation. As $\psi_2$
is surjective, we may choose conjugates $\eta_{E,j}$ of the loops
around $T_{E,j}$ such that $\psi_2(\eta_{E,j})=\delta_E^{m_{E,j}}$.
Choose integers $a_{E,j}$ satisfying
$\sum_j a_{E,j}m_{E,j}n_{E,j}=k_E$. Then
\[
 \lambda_E:=\prod_j\eta_{E,j}^{a_{E,j}n_{E,j}}\in\ker\phi_X,
 \qquad \psi_2(\lambda_E)=\delta_E^{k_E}.
\]
Therefore, taking a suitable product $\gamma'$ of conjugates of these
$\lambda_E$ and their inverses, we have
$\psi_2(\gamma')=\psi_2(\alpha')$ and $\phi_X(\gamma')=1$.
Hence, by exactness of the top row, there exists
$\beta\in\Gam(F,R|_F)$ such that $\psi_1(\beta)=\alpha'\gamma'^{-1}$.
Thus
\[
 \phi_1(\beta)=\phi_X(\psi_1(\beta))
 =\phi_X(\alpha'\gamma'^{-1})=\alpha,
\]
which proves $\operatorname{im}\phi_1\supseteq\ker\phi_2$.
The reverse inclusion follows from commutativity, and $\phi_2$ is
surjective because $\psi_2$ and $\phi_C$ are surjective.
Hence the bottom sequence is also exact.
Finally, adding the "missing divisors" in $Z_{\reg}$ possibly imposes further
loop relations, yielding $\Gam(U,R|_U)\twoheadrightarrow\Gam(Z,R)$.

\end{proof}

\begin{lemma}\label{lem:discriminant}
In the setting of Lemma~\ref{lem-fibration}, suppose $(Z,B,\bM)$ is generalized
lc Calabi-Yau and $R\le B$. Then $R_W\le B_W$, where $B_W$
is the discriminant of the canonical bundle formula. If
$(W,B_W,\bN)$ is $\mathbb Q$-factorial generalized dlt, then
$(W,R_W)$ is klt and has the same generalized lc completion.
\end{lemma}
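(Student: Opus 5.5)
The inequality $R_W\le B_W$ will be proved coefficientwise at each prime divisor $E$ of $W$, working over the generic point of $E$. The klt claim and the completion claim then follow from the dlt hypothesis.

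For the coefficient comparison, let $t_E$ be the generalized lc threshold of $f^*E$ with respect to $(Z,B,\bM)$ over the generic point of $E$. By Proposition~\ref{prop:cbf}, $\coeff_E B_W=1-t_E$, so it suffices to show $t_E\le 1/k_E$. Since $(Z,B+t_Ef^*E,\bM)$ is generalized lc over the generic point of $E$, every component $T_{E,j}$ of $f^*E$ has coefficient at most one:
\[
 \coeff_{T_{E,j}}B+t_Em_{E,j}\le1 .
\]
Because $R\le B$, we have $\coeff_{T_{E,j}}B\ge 1-1/n_{E,j}$. Hence $t_Em_{E,j}\le 1/n_{E,j}$, that is, $t_E\le 1/(m_{E,j}n_{E,j})$ for every $j$. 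As $k_E=\gcd_j(m_{E,j}n_{E,j})$ divides each $m_{E,j}n_{E,j}$, we get $k_E\le\min_j m_{E,j}n_{E,j}$, so $1/\min_j(m_{E,j}n_{E,j})\le 1/k_E$. Therefore $t_E\le 1/k_E$ and $\coeff_E R_W=1-1/k_E\le 1-t_E=\coeff_E B_W$.

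The step I expect to need care is this lc-threshold comparison. It uses only that divisors on $Z$ have log discrepancy at least zero, and that the threshold is computed over the generic point of $E$, where $W$ is smooth and the pullback $f^*E$ makes sense. I do not need to identify which place computes the threshold. One subtlety must be checked: if $k_E=\infty$, which happens when every component has coefficient one in $R$, then each $n_{E,j}=\infty$. The displayed inequality then forces $t_E=0$, so $\coeff_E B_W=1$, consistent with $\coeff_E R_W=1$. Since $R$ has coefficients smaller than one, this case does not actually occur, and each $k_E$ is a finite positive integer.

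For the klt claim, assume $(W,B_W,\bN)$ is $\mathbb Q$-factorial generalized dlt. By the inequality just proved, $R_W\le B_W$, and $R_W$ has coefficients strictly below one because each $k_E$ is finite. I would apply Lemma~\ref{lem:interpolation}, or argue directly, to the pair $(W,R_W)$. As noted after the definition of generalized pairs, on a $\mathbb Q$-factorial variety the negativity lemma gives $f^*\bN_W-\bN_V\ge0$, so the ordinary log discrepancies of $(W,B_W)$ are at least the generalized ones of $(W,B_W,\bN)$. Lowering the boundary from $B_W$ to $R_W$ only increases log discrepancies.

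It remains to see that these log discrepancies are positive. Exceptional divisors over $W$ with generalized log discrepancy zero have centers that are strata of $\lfloor B_W\rfloor$, since the pair is gdlt. Because $\lfloor R_W\rfloor=0$, the log discrepancies of $(W,R_W)$ at such divisors are strictly positive. The standard dlt argument — at the generic point of each stratum the pair is log smooth, so the klt property can be tested there — then shows $(W,R_W)$ is klt. Finally, $(W,B_W,\bN)$ itself is a generalized lc Calabi--Yau completion of $(W,R_W)$, because $B_W\ge R_W$, $\bN$ is b-nef, and $K_W+B_W+\bN_W\sim_{\mathbb Q}0$ by Proposition~\ref{prop:cbf}.
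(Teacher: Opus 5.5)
Your proposal is correct and follows the paper's proof essentially verbatim. It uses the same chain $t_E\le(1-b_{E,j})/m_{E,j}\le 1/(m_{E,j}n_{E,j})\le 1/k_E$, and the same klt argument: negativity on the $\mathbb Q$-factorial base, plus the strict increase of log discrepancies at glc places, whose centres lie in $\lfloor B_W\rfloor$ because the pair is generalized dlt. You can drop the aside about Lemma~\ref{lem:interpolation}, which presupposes the klt property being proved. The closing appeal to a ``standard dlt argument'' is also unnecessary, since the strict increase already follows from $B_W-R_W$ being $\mathbb Q$-Cartier with positive coefficient on every component of $\lfloor B_W\rfloor$.
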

\begin{proof}
Writing $b_{E,j}=\coeff_{T_{E,j}}B$, the generalized lc threshold satisfies
\[
 t_E\le\frac{1-b_{E,j}}{m_{E,j}}
 \le\frac1{m_{E,j}n_{E,j}}\le\frac1{k_E}.
\]
Thus $R_W\le B_W$. All coefficients of $R_W$ are smaller than one.
Lowering every coefficient-one component of a generalized dlt pair
makes the pair generalized klt since each lc place has center in the
coefficient-one boundary and its discrepancy strictly increases. Comparing the discrepancies, we get that $(W,R_W)$ is klt.
\end{proof}

\section{Fundamental Groups and Coregularity}\label{sec:coregularity}
We prove Theorem~\ref{intro:coreg-all} by reduction to standard-coefficient
Calabi-Yau pairs. 
We then apply the regular-locus theorem to a log index-one cover and
deduce the Calabi-Yau and nef cases in this section.

\begin{theorem}[Theorem~\ref{intro:coreg-all}]\label{thm:coreg-all}
Let $(X,D)$ be a projective klt pair of finite absolute
coregularity $c$. Then $\Gam(X,D)$ has a normal free abelian
subgroup of finite index and rank at most $2c$.
\end{theorem}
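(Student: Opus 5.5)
The proof combines two ingredients. Theorem~\ref{introthm-induction-coreg} (stated later as Theorem~\ref{thm-induction-coreg}) reduces the question to standard-coefficient klt Calabi-Yau pairs of dimension $c$. Proposition~\ref{prop:cy-cover} supplies the Calabi-Yau input in every dimension. Given these, the statement follows formally: fix $c=\widehat{\operatorname{coreg}}(X,D)$ and apply the reduction with this $c$. Its hypothesis is that every projective klt Calabi-Yau pair $(Y,R)$ of dimension $c$ with standard coefficients has $\Gam(Y,R)$ virtually abelian of rank at most $2c$. The conclusion is that $\Gam(X,D)$ is virtually abelian of rank at most $2c$. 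Finite generation, from the remark after Definition~\ref{def:virtually-abelian}, together with passage to the normal core then yields a normal free abelian subgroup of finite index and rank at most $2c$.

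For the Calabi-Yau hypothesis, let $(Y,R)$ have standard coefficients and $K_Y+R\sim_{\mathbb Q}0$, and let $s$ be its index. I would use the log index-one cover of \cite[Proposition~12]{CGG24}, namely a cyclic cover $p\colon Y'\to Y$ of degree $s$ branched exactly to the orders prescribed by $R$, with $Y'$ canonical and $K_{Y'}\sim0$. I need to check that this cover is the orbifold cover associated with a surjection $\Gam(Y,R)\to\mathbb Z/s\mathbb Z$. Its restriction over $Y_{\reg}\setminus\Supp R$ is \'etale, and the ramification along each $R_i$ equals $n_i$, so the cover induces such a surjection. Its kernel is $\pi_1$ of the preimage. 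Since the cover is unramified over $Y_{\reg}$ in the orbifold sense, this preimage is a big open subset of $Y'_{\reg}$ up to codimension two, and so the kernel is $\pi_1(Y'_{\reg})$.

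Now \cite[Theorem~1.3]{FGSW26} gives that $\pi_1(Y'_{\reg})$ is virtually $\mathbb Z^{2\qtilde(Y')}$. Finite extensions preserve virtual abelianity and its rank, so $\Gam(Y,R)$ is virtually abelian of rank $2\qtilde(Y')\le 2\dim Y'=2c$. The inequality $\qtilde\le\dim$ holds because $q$ of a quasi-\'etale cover with $K\equiv0$ is bounded by the dimension, for instance via the Albanese (Kawamata) argument.

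The main obstacle is the identification of the kernel with $\pi_1(Y'_{\reg})$. Preimages of singular points of $Y$ may differ from the singular locus of $Y'$, and removing codimension-two subsets is harmless, but I must also make sure that points of $Y'_{\reg}$ lying over $Y_{\sing}$ add no relations. I would handle this by comparing $\pi_1$ of $p^{-1}(Y_{\reg}\setminus\Supp R)$ with $\pi_1(Y'_{\reg})$. Filling in the ramification divisors over $\Supp R$ kills exactly the kernel of the orbifold relations, because the local ramification indices equal the $n_i$. Everything else then differs by subsets of codimension at least two in the smooth manifold $Y'_{\reg}$, which do not change the fundamental group.

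Finally, I would note the edge case $c=0$: there the Calabi-Yau pairs are points, and the rank bound $0$ gives finiteness.
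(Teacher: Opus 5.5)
Your proposal is correct and is essentially the paper's argument. The paper proves Theorem~\ref{thm:coreg-all} by verifying the hypothesis of Theorem~\ref{thm-induction-coreg} via Proposition~\ref{prop:cy-cover}, which combines the log index-one cover of \cite[Proposition~12]{CGG24} with \cite[Theorem~1.3]{FGSW26}, and then passing to a normal core, exactly as you do. Your kernel identification also matches the paper's: it removes a codimension-two subset of the base so that the base is smooth with smooth disjoint branch divisors, over which the cover is smooth with local model $(u,z)\mapsto(u^{m_i},z)$, and uses finiteness of $p$ to see that the complement upstairs still has codimension at least two.
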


We recall the reduction theorem for the sake of completeness:

\begin{theorem}[Theorem~\ref{introthm-induction-coreg}]
\label{thm-induction-coreg}\label{thm:replacement-reduction}
Fix an integer $c\ge0$. Assume that for every projective klt Calabi-Yau pair
of dimension $c$ with standard coefficients the regional orbifold
fundamental group is virtually abelian of rank at most $2c$.
Then every projective klt pair $(X,D)$ of absolute coregularity
$a\le c$ has virtually abelian regional orbifold fundamental group
of rank at most $2a$.
\end{theorem}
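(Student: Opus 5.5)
We argue by induction on $\dim X$; the case $\dim X=c$ together with the klt Calabi-Yau input will form the base. Let $(X,D)$ be projective klt with absolute coregularity $a\le c$, and fix a completion $(X,B,\bM)$ with $B\ge D$ and $\operatorname{coreg}(X,B,\bM)=a$. Since $\Gam(X,D)=\Gam(X,D_{\st})$, we may replace $D$ by $P:=D_{\st}$. We then pass to a small $\mathbb Q$-factorialization, which leaves $\Gam$ unchanged by Lemma~\ref{lem-MMP-group} and preserves the completion.

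Next we take a $\mathbb Q$-factorial gdlt modification $g:Y\to X$ of $(X,B,\bM)$. By Lemma~\ref{lem-dlt-group} there is a standard-coefficient boundary $R_Y=P_Y^{\mathrm{str}}+\sum(1-1/q_i)E_i$ with $\Gam(Y,R_Y)\twoheadrightarrow\Gam(X,P)$. By Lemma~\ref{lem-virtual-groups} with trivial kernel, a quotient of a virtually abelian group of rank $\le 2a$ is again virtually abelian of rank $\le 2a$. Moreover $R_Y\le B_Y$ and $(Y,R_Y)$ is klt. If $a=\dim X$, Lemma~\ref{lem:cy-coreg} and its proof suggest arranging that the completion is klt Calabi-Yau with $B=P$, apart from the case where $K_X+P$ is not pseudo-effective. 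This non-pseudo-effective case must be handled by the MMP below, so the real division into cases is by whether $K_Y+R_Y$ is pseudo-effective.

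If $K_Y+R_Y$ is pseudo-effective, Lemma~\ref{lem:cy-coreg} applied to $(Y,R_Y)$, which still has a completion, gives $K_Y+R_Y\sim_{\mathbb Q}0$ and $\widehat{\operatorname{coreg}}(Y,R_Y)=\dim Y$. Lemma~\ref{lem-coreg-bir-morphism}, or directly the fact that $R_Y\le B_Y$ and the coregularity of $B_Y$ is $a$, then forces $\dim Y\le a\le c$. When $\dim Y=c$ the hypothesis applies directly. When $\dim Y<c$ we need the statement in dimension $\dim Y$; we handle this by multiplying with an elliptic curve, or more simply by taking the product with $(E^{c-\dim Y},0)$ for an elliptic curve $E$. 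This yields a $c$-dimensional standard klt Calabi-Yau pair whose group is $\Gam(Y,R_Y)\times\mathbb Z^{2(c-\dim Y)}$, so the rank bound $2c$ gives rank $\le 2\dim Y\le 2a$.

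If $K_Y+R_Y$ is not pseudo-effective, we run a $(K_Y+R_Y)$-MMP with scaling, which is possible since the pair is klt. Steps are birational contractions, so Lemma~\ref{lem-MMP-group} gives surjections $\Gam(Y_k,R_k)\twoheadrightarrow\Gam(Y,R_Y)$, and the completion pushes forward with the same coregularity, using Remark~\ref{rem:coreg-facts}. The MMP ends in a Mori fibre space $Z\to W$ with $\dim W<\dim Z$. We then use Lemma~\ref{lem-dlt-mod-Fano-fibration} to replace it by a gdlt model $Z'\to W'$ over a gdlt modification of the base, with the added divisors given orbifold coefficients via Lemma~\ref{lem-dlt-group}. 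Lemma~\ref{lem-fibration} produces an exact sequence $\Gam(F,R|_F)\to\Gam(U,R|_U)\to\Gam(W',R_{W'})\to1$ with $\Gam(U)\twoheadrightarrow\Gam(Z',R')$. Lemma~\ref{lem-Fano-extraction} shows that the fibre group is finite, since the general fibre is Fano type with extracted lc places only. Lemma~\ref{lem:discriminant} shows that $(W',R_{W'})$ is klt with a completion, and Remark~\ref{rem:coreg-facts} gives absolute coregularity $\le a$. Induction on dimension gives that $\Gam(W',R_{W'})$ is virtually abelian of rank $\le 2a$, and Lemma~\ref{lem-virtual-groups} lifts this to $\Gam(U)$ and hence to its quotient.

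The main obstacle is the bookkeeping in the fibration step. We must ensure that the boundary on $Z'$ dominates the pushed-forward orbifold boundary, so that the surjections compose back to $\Gam(X,D)$. We must also ensure that the base genuinely has coregularity $\le a$ for the induced completion $(W',B_{W'},\bN)$, which needs compatibility of the dlt models. We would first try to check that minimal lc centres upstairs map onto or into centres of the base, via Lemma~\ref{lem-image-lcc}.
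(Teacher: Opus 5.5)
Your proposal is correct and follows essentially the same route as the paper: the pseudo-effective case is handled by Lemma~\ref{lem:cy-coreg} plus the elliptic-curve product trick, and otherwise you run an MMP to a Mori fibre space, take a generalized dlt modification of the base, and argue by induction via Lemmas~\ref{lem-dlt-mod-Fano-fibration}, \ref{lem-dlt-group}, \ref{lem-fibration}, \ref{lem-Fano-extraction}, \ref{lem:discriminant} and Remark~\ref{rem:coreg-facts}. The deviations are inessential: the paper splits cases on $K_X+P$ directly, without your preliminary gdlt extraction (which extracts nothing in the pseudo-effective case anyway, since Lemma~\ref{lem:cy-coreg} forces $B_Y=R_Y$). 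Also, passing to a quotient needs only that the image of a finite-index abelian subgroup is again finite-index abelian of no larger rank, not Lemma~\ref{lem-virtual-groups}.
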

\begin{proof}
\ \\
\noindent
\emph{Preparation.}
If the assumption holds in dimension $c$, then it also holds in every dimension $k\le c$: apply the
hypothesis to the product of the given $k$-dimensional Calabi-Yau pair
$(V,C)$ with standard coefficients and $E^{c-k}$, where $E$ is an
elliptic curve, with boundary $\operatorname{pr}_V^*C$.

If $G$ is the regional orbifold fundamental group of the given
$k$-dimensional pair, then that of the product is 
$G\times\mathbb Z^{2(c-k)}$. Virtual abelianity and the bound
$2c$ imply that $G$ is virtually abelian of rank at most $2k$.
By Lemma~\ref{lem:cy-coreg} the product has absolute coregularity $c$.

We perform an induction on $n=\dim X$ (simultaneously for all absolute
coregularities at most $c$). 
Choose a generalized lc completion
$(X,B,\bM)$ of coregularity
$a=\widehat{\operatorname{coreg}}(X,D)\le c$.
First take a small $\mathbb Q$-factorialization. The regional fundamental group
and the completion's coregularity are unchanged. Put $P=D_{\st}$,
then $(X,P)$ is klt, $P\le B$, and $\Gam(X,P)=\Gam(X,D)$.
\ \\
\noindent
\emph{Case 1: $K_X+P$ is pseudo-effective.}
Lemma~\ref{lem:cy-coreg} shows that $(X,P)$ is a klt Calabi-Yau pair
with standard coefficients and 
forces $a=n$. The assumption in dimensions $n\le c$ proves the
claim. 
\ \\
\noindent
\emph{Case 2: $K_X+P$ is not pseudo-effective.}
Run a $(K_X+P)$-MMP with scaling to a Mori fibre space
$f:Z\to W$. The completion is carried crepantly through the MMP,
since its log canonical divisor is $\mathbb Q$-linearly trivial.
It remains generalized lc of coregularity $a$. We have
$\Gam(Z,P_Z)\twoheadrightarrow\Gam(X,P)$ by
Lemma~\ref{lem-MMP-group}. If $W$ is a point, $(Z,P_Z)$ is log
Fano, so $\Gam(Z,P_Z)$ is finite by Theorem~\ref{thm:fano}.

Otherwise apply the canonical bundle formula, take a generalized
dlt modification $W'\to W$ of the base, and use
Lemma~\ref{lem-dlt-mod-Fano-fibration} to obtain $f':Z'\to W'$.
Choose the boundary $R_{Z'}$ with standard coefficients using
Lemma~\ref{lem-dlt-group}, so that
\[
 \Gam(Z',R_{Z'})\twoheadrightarrow\Gam(Z,P_Z).
\]
It satisfies $R_{Z'}\le B_{Z'}$ and has all coefficients smaller
than one. Since the completion on $Z'$ is generalized dlt, the
ordinary pair $(Z',R_{Z'})$ is klt.

Let $R_{W'}$ be the gcd boundary from Lemma~\ref{lem-fibration}.
By Lemma~\ref{lem:discriminant}, the pair $(W',R_{W'})$ is klt
and its generalized lc completion has coregularity at most $a$, by
Remark~\ref{rem:coreg-facts}. Thus its absolute coregularity is
$a'\le a$. Since $\dim W'<n$, by induction we get virtual abelianity
of rank at most $2a'\le2a$ for $\Gam(W',R_{W'})$.

Let $F'$ and $F$ be general fibres of $f'$ and $f$.
We next verify that $\Gam(F',R_{Z'}|_{F'})$ is finite,
where $R_{Z'}$ is the boundary just constructed using Lemma~\ref{lem-dlt-group}.
The pair $(F,P_Z|_F)$ is log Fano. Every prime divisor of $F$ survives on $F'$, and every additional
divisor on $F'$ is an lc place of the completion due to
Lemma~\ref{lem-dlt-mod-Fano-fibration}.
If necessary, first take a small $\mathbb Q$-factorialization
$\widehat F\to F$ so that the log anticanonical divisor becomes nef and
big, and the completion pulls back crepantly. 
By Lemma~\ref{lem-BCHM-generalized-pairs}, there is an extraction
$F_0\to\widehat F$ of exactly these places. Hence $F_0\dashrightarrow F'$
is small. The strict transform $R_{F_0}$ of $R_{Z'}|_{F'}$ on $F_0$
is the strict transform of $P_Z|_F$ plus exceptional divisors with
standard coefficients smaller than one.
Lemma~\ref{lem-Fano-extraction} gives that $\Gam(F_0,R_{F_0})$ is finite.
The small map identifies it with $\Gam(F',R_{Z'}|_{F'})$
by Lemma~\ref{lem-MMP-group}.

Apply Lemma~\ref{lem-fibration} to $f'$ and let $U=f'^{-1}(V)$
be the resulting open set. There is an exact sequence
\[
 \Gam(F',R_{Z'}|_{F'})\longrightarrow\Gam(U,R_{Z'}|_U)
 \longrightarrow\Gam(W',R_{W'})\longrightarrow1.
\]
By Lemma~\ref{lem-virtual-groups},  the middle group is virtually
abelian of rank at most $2a$. The surjections
\[
 \Gam(U,R_{Z'}|_U)\twoheadrightarrow\Gam(Z',R_{Z'})
 \twoheadrightarrow\Gam(Z,P_Z)\twoheadrightarrow\Gam(X,D)
\]
yield the desired statement.
\end{proof}

\subsection{Calabi-Yau pairs in arbitrary dimension}
The following proposition builds upon the global log-index one cover constructed in~\cite[Proposition~12]{CGG24}. 
For the sake of completeness, we provide the construction and the fundamental-group comparison explicitly.
\begin{proposition}\label{prop:cy-cover}
Let $(X,D)$ be a projective klt Calabi-Yau pair of dimension $n$
with standard coefficients. Write $D=\sum_i(1-1/m_i)D_i$, and let
$s$ be the smallest positive integer such that $s(K_X+D)\sim0$.
There is a connected finite cyclic cover $p:Y\to X$ of degree $s$,
with ramification index exactly $m_i$ over $D_i$ and no other
divisorial ramification, such that $Y$ is canonical, $K_Y\sim0$
is Cartier, and
\[
 1\longrightarrow\pi_1(Y_{\reg})\longrightarrow\Gam(X,D)
 \longrightarrow\mathbb Z/s\mathbb Z\longrightarrow1.
\]
Consequently, $\Gam(X,D)$ is virtually abelian of rank
$2\qtilde(Y)\le2n$. 
\end{proposition}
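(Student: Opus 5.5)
We construct $Y$ as the normalization of the index-one cover of $X$ defined by the torsion Weil divisor $K_X+D$. Since $D$ has standard coefficients, $s(K_X+D)\sim0$ means $sK_X+\sum_i s(1-1/m_i)D_i\sim0$ with integral coefficients, because each $m_i$ divides $s$; this is the classical observation that the index of a standard-coefficient Calabi-Yau pair is divisible by every $m_i$. Choose $\varphi:\mathcal O_X(s(K_X+D))\xrightarrow{\simeq}\mathcal O_X$ and, on the big open set $X^\circ=X_{\reg}$, form the cyclic cover attached to the $\mathbb Q$-line bundle $L=\mathcal O_X(K_X+\lceil D\rceil)$ with section of $L^{s}$ given by $\prod_i D_i^{s/m_i}$. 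Concretely, take the normalization of $\operatorname{Spec}\bigoplus_{j=0}^{s-1}\mathcal O_X(-\lfloor j(K_X+D)\rfloor)$, following~\cite[Proposition~12]{CGG24}. Minimality of $s$ gives connectedness and degree $s$. A local computation at general points of $D_i$ shows ramification index exactly $m_i$ there: the Galois group acts on local sections by characters, and the fractional part $j(1-1/m_i)$ has denominator $m_i$. There is no divisorial ramification elsewhere. By Hurwitz, $K_Y=p^*(K_X+D)$. This divisor is Cartier and trivial, since $p^*(K_X+D)$ carries the tautological $s$-th root of $\varphi$. Because $(X,D)$ is klt and the pullback is crepant, $Y$ is klt, and klt with $K_Y$ Cartier forces canonical singularities.

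For the exact sequence, restrict to $X_{\reg}\setminus\Supp D$. Over this set $p$ is étale Galois with group $\mathbb Z/s\mathbb Z$, so it corresponds to a surjection $\pi_1(X_{\reg}\setminus\Supp D)\to\mathbb Z/s\mathbb Z$. A loop $\gamma_i$ around $D_i$ maps to an element of order exactly $m_i$, since that is the ramification index. Therefore $\gamma_i^{m_i}$ lies in the kernel, and the map factors through $\Gam(X,D)$. Its kernel is $\pi_1$ of $p^{-1}(X_{\reg}\setminus\Supp D)$ modulo the normal closure of the $\gamma_i^{m_i}$. These lift to loops around the preimage divisors of $D_i$ in $Y$, so by Nori's reinsertion~\cite[Fact~1.2]{Nor83} the kernel is $\pi_1(p^{-1}(X_{\reg}))$ up to a codimension-two subset of the smooth part. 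The main technical obstacle is comparing $p^{-1}(X_{\reg})$ with $Y_{\reg}$. Every point of $p^{-1}(X_{\reg}\setminus\Supp D)$ is smooth, and general points over $D_i$ are smooth, since the local cover there is $z\mapsto z^{m_i}$. Thus $p^{-1}(X_{\reg})\cap Y_{\reg}$ is a big open subset of $Y_{\reg}$. Conversely, points of $Y_{\reg}$ lying over $X_{\sing}$ form a set of codimension at least two. Removing codimension-two subsets from a smooth variety does not change $\pi_1$, so both groups equal $\pi_1(Y_{\reg})$. An equivalent route is to view $\Gam(X,D)$ as $\pi_1$ of the orbifold and $Y_{\reg}^\circ\to X$ as an orbifold Galois cover.

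Finally, $K_Y\sim0$ with $Y$ klt projective, so~\cite[Theorem~1.3]{FGSW26} makes $\pi_1(Y_{\reg})$ virtually $\mathbb Z^{2\qtilde(Y)}$. A finite-index subgroup of $\Gam(X,D)$ is therefore virtually $\mathbb Z^{2\qtilde(Y)}$, which gives the virtual rank. For the bound, $\qtilde(Y)\le\dim Y=n$: by the Beauville-Bogomolov decomposition for klt varieties, a quasi-étale cover splits off an abelian factor of dimension $\qtilde(Y)$. Alternatively, $q(Y')\le\dim Y'$ for every quasi-étale cover $Y'$, by Hodge symmetry on a resolution. This gives rank $2\qtilde(Y)\le2n$.
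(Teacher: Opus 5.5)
Your proposal is correct and follows the paper's proof essentially step by step. Both arguments normalize $X$ in the Kummer extension generated by an $s$-th root $t$ of $g$, where $s(K_X+D)=\operatorname{div}_X(g)$, get ramification exactly $m_i$ over $D_i$, use $K_Y=p^*(K_X+D)=\operatorname{div}_Y(t)$ and klt plus Cartier to obtain canonicity, compare orbifold coverings after discarding codimension-two sets (your $p^{-1}(X_{\reg})\cap Y_{\reg}$ plays the role of the paper's $p^{-1}(U)$), and finish with \cite[Theorem~1.3]{FGSW26}.

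There are two minor slips. First, the graded sheaf should be $\bigoplus_j\mathcal O_X(\lfloor -j(K_X+D)\rfloor)$, because with $-\lfloor j(K_X+D)\rfloor$ the multiplication does not land in the correct summand. Second, Hodge symmetry alone does not give $q(Y')\le\dim Y'$, as curves of large genus show; for $\qtilde(Y)\le n$, rely on the decomposition theorem or on surjectivity of the Albanese map when $K_{Y'}\equiv0$, as in \cite[Theorem~4.8]{Amb05}.
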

\begin{proof}
Choose a canonical Weil divisor $K_X$ and a rational function
$g\in\mathbb C(X)^*$ such that
$s(K_X+D)=\operatorname{div}_X(g)$.
Let $t$ be an $s$-th root of $g$ in an algebraic closure of
$\mathbb C(X)$, and set $L=\mathbb C(X)(t)$.
This is a cyclic Galois extension of degree $e$ dividing $s$, and
$t^e\in\mathbb C(X)$. If $e<s$, then
$\operatorname{div}_X(t^e)=e(K_X+D)$, contrary to the minimality
of $s$. Thus $[L:\mathbb C(X)]=s$.

Let $Y$ be the normalization of $X$ in $L$. 
At a prime divisor $P$ on $X$, the ramification index of this
root extension is
\[
 e_P=\frac{s}{\gcd(s,\operatorname{ord}_P(g))}.
\]
The coefficient of $K_X$ at $P$ is an integer. Also $m_i$ divides
$s$, because $s(K_X+D)$ is integral. At $D_i$, writing that
integer as $a_i$, we obtain
\[
 \operatorname{ord}_{D_i}(g)=s(a_i+1)-s/m_i,
 \qquad e_{D_i}=m_i.
\]
At a prime divisor $P$ outside $\operatorname{Supp}D$, the order
of vanishing $\operatorname{ord}_P(g)$ 
is divisible by $s$, so there is no other divisorial ramification.

Choose the canonical divisor on $Y$ using the pullback of the
rational top form defining $K_X$. At a divisor over $D_i$, the
ramification coefficient is $m_i-1=m_i(1-1/m_i)$.
The canonical divisor formula therefore gives
\[
 K_Y=p^*(K_X+D)=\frac1s\operatorname{div}_Y(g)
     =\operatorname{div}_Y(t).
\]
Only the sum $K_X+D$ is pulled back as a $\mathbb Q$-Cartier
divisor; the equality is checked in codimension one. Thus $K_Y$
is Cartier and linearly trivial. Finite crepant pullback preserves
klt singularities by~\cite[Proposition~5.20]{KM98}, and since $K_Y$ is Cartier, $Y$ is in fact canonical.

Remove a codimension-two subset of $X$ so that the remaining open
set $U$ is smooth and the branch divisors are smooth and disjoint.
Locally along $D_i\cap U$, the normalized cover is a disjoint union
of maps $(u,z_2,\ldots,z_n)\mapsto(u^{m_i},z_2,\ldots,z_n)$.
Away from the boundary it is \'etale. Thus $p^{-1}(U)$ is smooth
and the cover is an orbifold covering with the exact prescribed
ramification. The complement of $p^{-1}(U)$ in $Y$ has codimension at least two,
because $p$ is finite. Removing these codimension-two subsets from
the regular loci changes neither the fundamental group nor the
orbifold loop relations. This proves the claimed exact sequence.
Now apply~\cite[Theorem~1.3]{FGSW26} to $Y$.
\end{proof}

The global $\mathbb Q$-linear triviality and the standard coefficients
are essential to this construction. The former supplies $g$ and makes the global covering construction work, while the
latter makes the pulled-back boundary vanish. In particular,
$\mathbb Q$-Cartierness of $K_X+D$ alone does not assert the
existence of such a global cover. For arbitrary rational coefficients in the boundary,
standard approximation of course need not preserve the Calabi-Yau condition.
Theorem~\ref{thm-induction-coreg} supplies this additional needed reduction step,
and also the passage to pairs with nef log anticanonical divisor - see below.

\begin{proof}[Proof of Theorem~\ref{thm:coreg-all}] 
Proposition~\ref{prop:cy-cover} proves virtual abelianity and the rank
bound $2c$ for every projective klt Calabi-Yau pair of dimension $c$
with standard coefficients. Thus the hypothesis of
Theorem~\ref{thm-induction-coreg} holds for every $c$.
 Taking the normal core of a finite-index free abelian subgroup gives the stated form.
\end{proof}

\begin{corollary}[Corollary~\ref{intro:irregularity}]\label{cor:irregularity}
For a projective klt pair $(X,D)$ of finite absolute coregularity,
\[
 \qtilde(X)\le\widehat{\operatorname{coreg}}(X,D).
\]
\end{corollary}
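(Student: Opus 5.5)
Let $c=\widehat{\operatorname{coreg}}(X,D)$. The plan is to compare $\qtilde(X)$ with the virtual rank of $\Gam(X,D)$ and then use Theorem~\ref{thm:coreg-all}. Take a connected finite quasi-\'etale cover $X'\to X$ with $q(X')=\qtilde(X)$; if $\qtilde(X)$ is infinite, take covers with $q(X')$ as large as we like. Since $X'\to X$ is quasi-\'etale, it restricts to a finite \'etale cover of $X_{\reg}$ away from codimension two. Hence $\pi_1(X'_{\reg})$ is a finite-index subgroup of $\pi_1(X_{\reg})$.

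Next we relate these groups to $\Gam(X,D)$. By Definition~\ref{def:orb}, $\Gam(X,D)$ is a quotient of $\pi_1(X_{\reg}\setminus\Supp D_{\st})$, and $\pi_1(X_{\reg})$ is in turn the quotient of $\Gam(X,D)$ by the further relations $\gamma_i=1$. So $\Gam(X,D)\twoheadrightarrow\pi_1(X_{\reg})$. By Theorem~\ref{thm:coreg-all}, $\Gam(X,D)$ is virtually $\mathbb Z^r$ with $r\le 2c$. Quotients of finitely generated virtually abelian groups are virtually abelian of no larger rank. Therefore $\pi_1(X_{\reg})$, and hence its finite-index subgroup $\pi_1(X'_{\reg})$, is virtually abelian of rank at most $2c$.

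Finally we bound $q(X')$ by this rank. On a resolution $\tilde X'\to X'$, klt singularities are rational, so $q(X')=q(\tilde X')$. The Albanese map gives a surjection $\pi_1(\tilde X')\twoheadrightarrow H_1(\tilde X',\mathbb Z)/\mathrm{tors}\simeq\mathbb Z^{2q(X')}$ by Hodge theory. Moreover $\pi_1(X'_{\reg})\twoheadrightarrow\pi_1(X')$, and $\pi_1(\tilde X')\simeq\pi_1(X')$ for klt $X'$ by Takayama. Together these give a surjection $\pi_1(X'_{\reg})\twoheadrightarrow\mathbb Z^{2q(X')}$. A virtually abelian group of rank at most $2c$ cannot surject onto $\mathbb Z^m$ with $m>2c$, so $2q(X')\le 2c$. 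This gives $\qtilde(X)\le c$, and rules out the infinite case.

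The main obstacle is this last step: identifying $\pi_1$ of the resolution with $\pi_1(X')$, which needs Takayama's theorem for klt singularities. As an alternative, one can avoid it by using the surjection $\pi_1(X'_{\reg})\to H_1(X'_{\reg})$ together with the Albanese of $X'$, defined via the resolution. Rationality of singularities makes $H^1(X',\mathcal O)$ agree with $H^1$ of the resolution, so the Albanese torus of real dimension $2q(X')$ receives a map from $X'_{\reg}$ that surjects on $H_1$ up to finite index.
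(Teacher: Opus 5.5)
Your proof is correct and follows essentially the paper's route: you bound the virtual rank via Theorem~\ref{thm:coreg-all}, pass to a connected finite quasi-\'etale cover, and surject onto $H_1(V,\mathbb Z)/\mathrm{torsion}\simeq\mathbb Z^{2q(X')}$ of a resolution, using rational singularities to get $q(V)=q(X')$. The only differences are minor. You quotient to $\pi_1(X_{\reg})$ before passing to the finite-index subgroup, whereas the paper first passes to $\Gam(X',D')$. Also, Takayama's theorem is unnecessary: if you take the resolution $V\to X'$ to be an isomorphism over $X'_{\reg}$, as the paper does, then $\pi_1(X'_{\reg})\twoheadrightarrow\pi_1(V)$ is immediate, because $X'_{\reg}$ is a dense open subset of the smooth variety $V$.
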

\begin{proof}
For any connected finite quasi-\'etale cover $p:X'\to X$, let
$D'$ be the codimension-one pullback of $D$. Then $(X',D')$ is klt, and its regional orbifold fundamental group is a finite-index
subgroup of $\Gam(X,D)$. It therefore has the same virtual rank. 
Let $\rho:V\to X'$ be a projective resolution which is an isomorphism
over $X'_{\reg}$. Killing the boundary loops and then filling in
the exceptional divisors gives surjections
\[
 \Gam(X',D')\twoheadrightarrow\pi_1(X'_{\reg})
 \twoheadrightarrow\pi_1(V)
 \twoheadrightarrow H_1(V,\mathbb Z)/\mathrm{torsion}.
\]
The last group is isomorphic to $\mathbb Z^{2q(V)}$: Hodge decomposition and
Hodge symmetry give $b_1(V)=h^{1,0}(V)+h^{0,1}(V)=2q(V)$;
see~\cite[Corollary~3.2.12]{Huy05}.
The klt pair $(X',D')$ has rational singularities
\cite[Theorem~5.22]{KM98}, so $q(V)=q(X')$.
Hence $2q(X')\le\rank\Gam(X,D)\le2\widehat{\operatorname{coreg}}(X,D)$.
Taking the supremum of $q(X')$ over all connected finite quasi-\'etale
covers of $X$ gives $\qtilde(X)\le\widehat{\operatorname{coreg}}(X,D)$.
\end{proof}

\begin{theorem}[Theorem~\ref{intro:cy-all}]\label{cor:cy-all}
Let $(X,D)$ be a projective klt pair of dimension $n$ with rational,
possibly non-standard boundary $D$ and $K_X+D\num0$. Let $s$ be the smallest positive integer such that
$s(K_X+D)\sim0$. Then $\Gam(X,D)$ is virtually abelian of rank
at most $2n$. In particular:
\begin{enumerate}
	\item If $D$ has standard coefficients, there is a connected
	finite cyclic log index-one cover $p:Y\to X$ of degree $s$, with $Y$ canonical and $K_Y\sim0$, and an exact sequence
	\[
	1\longrightarrow\pi_1(Y_{\reg})\longrightarrow\Gam(X,D)
	\longrightarrow\mathbb Z/s\mathbb Z\longrightarrow1.
	\]
	In this case the virtual rank is exactly $2\qtilde(Y)$.
	\item If $D=0$, the virtual rank is exactly $2\qtilde(X)$.
\end{enumerate}
\end{theorem}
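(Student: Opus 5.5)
The plan is to derive the general rank bound from Theorem~\ref{thm:coreg-all}, and then treat the two refined cases separately. First, numerical-triviality abundance~\cite[Theorem~1.2]{Gon13} upgrades $K_X+D\num0$ to $K_X+D\sim_{\mathbb Q}0$, so the integer $s$ exists. The pair $(X,D,0)$ is then its own generalized lc Calabi-Yau completion, so its absolute coregularity is finite. By Lemma~\ref{lem:cy-coreg}, applied with $P=D$ (and $K_X+D$ is pseudo-effective), it equals $n$. Theorem~\ref{thm:coreg-all} then shows that $\Gam(X,D)$ is virtually abelian of rank at most $2n$. Note that the boundary need not be standard here: the reduction in Theorem~\ref{thm-induction-coreg} passes to $D_{\st}$ and runs an MMP, so non-standard coefficients are already handled.

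For item (1), with standard coefficients, Proposition~\ref{prop:cy-cover} applies verbatim. It produces the cyclic cover $p:Y\to X$ of degree $s$ with $Y$ canonical, $K_Y\sim0$, and the exact sequence. Since $\pi_1(Y_{\reg})$ has finite index $s$ in $\Gam(X,D)$, the two groups share the same virtual rank. By~\cite[Theorem~1.3]{FGSW26} applied to $Y$ (which has $K_Y\num0$), this rank is $2\qtilde(Y)$.

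For item (2), with $D=0$, I would compare with $\qtilde(X)$ directly rather than through the cover. Take the cover $Y$ from (1) (a standard boundary, and here there is no branch divisor). It is quasi-\'etale over $X$, and $K_Y\sim0$.
\begin{itemize}
\item Finite quasi-\'etale covers of $Y$ are quasi-\'etale covers of $X$, so $\qtilde(Y)\le\qtilde(X)$.
\item Conversely, any connected quasi-\'etale cover $X'\to X$ can be dominated by a common Galois quasi-\'etale cover $X''$ of $X'$ and $Y$ (normalize a component of the fibre product). Then $q(X')\le q(X'')\le\qtilde(Y)$.
\end{itemize}
The inequality $q(X')\le q(X'')$ is the standard injectivity of $H^1(\mathcal O)$ under finite surjective maps of varieties with rational singularities, via the trace. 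Hence $\qtilde(X)=\qtilde(Y)$, and the rank is $2\qtilde(X)$. Alternatively, one can simply apply~\cite[Theorem~1.3]{FGSW26} to $X$ itself, since $X$ is klt with $K_X\num0$, giving rank $2\qtilde(X)$ immediately.

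The main obstacle I expect is the bookkeeping in (2): verifying that $q$ is monotone under finite quasi-\'etale maps between klt varieties. The cleaner route is the direct application of FGSW26 to $X$, which I would use as the primary argument.
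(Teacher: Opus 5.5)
Your proposal is correct and follows essentially the paper's proof: Gongyo's abundance gives $K_X+D\sim_{\mathbb Q}0$ and the completion $(X,D,0)$, Theorem~\ref{thm:coreg-all} gives the rank bound, Proposition~\ref{prop:cy-cover} gives item (1), and \cite[Theorem~1.3]{FGSW26} applied directly to $X$ gives item (2). Your alternative comparison $\qtilde(X)=\qtilde(Y)$ via fibre products is also valid, though not needed; the trace splitting of $\mathcal O_{X'}\to\pi_*\mathcal O_{X''}$ already gives $q(X')\le q(X'')$ for any finite surjective map of normal varieties, without rational singularities.
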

\begin{proof}
By~\cite[Theorem~1.2]{Gon13}, we have $K_X+D\sim_{\mathbb Q}0$, and $(X,D,0)$ is a
completion. Apply Theorem~\ref{thm:coreg-all} for the rank bound.
When $D$ has standard coefficients, Proposition~\ref{prop:cy-cover}
gives the stated cyclic cover, exact sequence, and rank $2\qtilde(Y)$.
For $D=0$, the exact rank $2\qtilde(X)$ follows directly from \cite[Theorem~1.3]{FGSW26}.
\end{proof}

\begin{example}\label{ex:rank-product}
Let $C=\mathbb P^1$, let $p_1,\ldots,p_4$ be distinct points,
and put $\Delta=\frac12\sum_i p_i$. The double cover branched
at these points is elliptic, so $\Gam(C,\Delta)$ has a subgroup
$\mathbb Z^2$ of index two. For every $n\ge1$, the klt
Calabi-Yau pair
\[
 (X,D)=\left(C^n,\sum_{i=1}^n\operatorname{pr}_i^*\Delta\right)
\]
has a subgroup $\mathbb Z^{2n}$ of index $2^n$ in its regional
orbifold fundamental group, while $\qtilde(X)=0$. We see that indeed the dimension bound is sharp. 
\end{example}

\subsection{Pairs with nef log anticanonical class}
\begin{theorem}[Theorem~\ref{intro:nef}]\label{thm:nef}
Let $(X,D)$ be a projective klt pair of dimension $n$ such that
$-(K_X+D)$ is nef. Then $\Gam(X,D)$ is virtually abelian of
rank at most $2\widehat{\operatorname{coreg}}(X,D)\le2n$.
\end{theorem}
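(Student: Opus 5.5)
The argument reduces Theorem~\ref{thm:nef} to Theorem~\ref{thm:coreg-all}. It is enough to show that $(X,D)$ admits a generalized lc Calabi-Yau completion and that its absolute coregularity is at most $n$.

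For the completion, put $L=-(K_X+D)$, which is a nef $\mathbb Q$-Cartier divisor since $(X,D)$ is klt. Let $\bM=\overline{L}$ be the b-divisor given by the Cartier closure of $L$. It is b-$\mathbb Q$-Cartier, it descends already on $X$, and it is b-nef because pullbacks of nef divisors are nef. Take $B=D$. Then
\[
K_X+B+\bM_X=K_X+D+L\sim_{\mathbb Q}0 .
\]
Because $\bM$ descends on $X$, the generalized discrepancies of $(X,D,\bM)$ agree with the ordinary discrepancies of $(X,D)$. The generalized pair is therefore generalized klt, and in particular generalized lc. So $(X,D,\bM)$ is a completion in the sense of Definition~\ref{def:absolute-coregularity}, and $\widehat{\operatorname{coreg}}(X,D)$ is finite.

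This completion is klt, so its dual complex is empty and has dimension $-1$. Its coregularity is therefore $n-(-1)-1=n$, which gives $\widehat{\operatorname{coreg}}(X,D)\le n$. More generally, every completion has a dual complex of dimension at least $-1$, so every absolute coregularity is at most $\dim X$.

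Theorem~\ref{thm:coreg-all} now applies with $c=\widehat{\operatorname{coreg}}(X,D)$. It gives a normal free abelian subgroup of finite index and rank at most $2c\le 2n$ in $\Gam(X,D)$, which is the statement.

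No step is a real obstacle. The only point needing care is that the trace of $\overline{L}$ on $X$ is $L$ itself, so the convention of Definition~\ref{def:absolute-coregularity} allows a completion whose boundary equals $D$ and whose nef part carries all of $-(K_X+D)$.
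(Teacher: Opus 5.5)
Your proposal is correct and takes the same approach as the paper. You use the completion $(X,D,\overline{-(K_X+D)})$, note that its nef part descends to $X$, so it is generalized klt with coregularity $n$ (giving $\widehat{\operatorname{coreg}}(X,D)\le n$), and then apply Theorem~\ref{thm:coreg-all}.
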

\begin{proof}
Set $B=D$ and $\bM=\overline{-(K_X+D)}$.
The equality $K_X+D+\bM_X=0$ defines a generalized klt, hence
generalized lc, Calabi-Yau completion. The nef part descends to
$X$, so the generalized discrepancies are exactly those of $(X,D)$.
Theorem~\ref{thm:coreg-all} gives the sharp coregularity bound, and
$\widehat{\operatorname{coreg}}(X,D)\le\dim X$ gives the stated
dimension bound. 
\end{proof}

\subsection{Growth and uniformization}\label{sec:growth-uniformization}
\begin{corollary}[Corollary~\ref{intro:growth}]\label{cor:polynomial-growth}
Let $(X,D)$ be a projective klt pair with $-(K_X+D)$ nef, and
put $c=\widehat{\operatorname{coreg}}(X,D)$.
Both $\Gam(X,D)$ and $\pi_1(X_{\reg})$ have polynomial growth
of degree at most $2c$.
\end{corollary}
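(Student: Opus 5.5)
The plan is to combine Theorem~\ref{thm:nef} with the standard fact that a finitely generated virtually abelian group has polynomial growth of degree equal to its virtual rank. Since $-(K_X+D)$ is nef, Theorem~\ref{thm:nef} (equivalently Theorem~\ref{thm:coreg-all} applied to the completion $(X,D,\overline{-(K_X+D)})$) shows that $\Gam(X,D)$ has a finite-index subgroup isomorphic to $\mathbb Z^r$ with $r\le 2c$.

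Finite generation is recorded in the remark following Definition~\ref{def:virtually-abelian}. Growth type is a quasi-isometry invariant, and a finite-index subgroup is quasi-isometric to the ambient group by the Švarc–Milnor lemma or a direct word-metric comparison. The growth function of $\mathbb Z^r$ is polynomial of degree exactly $r$. Hence $\Gam(X,D)$ has polynomial growth of degree $r\le 2c$.

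For $\pi_1(X_{\reg})$ we use that it is a quotient of $\Gam(X,D)$: by Definition~\ref{def:orb}, filling in the boundary components in $X_{\reg}$ kills the loops around them, so there is a surjection $\Gam(X,D)\twoheadrightarrow\pi_1(X_{\reg})$. This is the same surjection already used in the proof of Corollary~\ref{cor:irregularity}. The image of a finite-index subgroup $\mathbb Z^r$ is a finite-index subgroup of the quotient. That image is a finitely generated abelian group, so it is virtually $\mathbb Z^{r'}$ with $r'\le r$. Therefore $\pi_1(X_{\reg})$ is virtually $\mathbb Z^{r'}$ and has polynomial growth of degree $r'\le 2c$.

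An alternative route is to apply Theorem~\ref{thm:coreg-all} directly to the pair $(X,0)$. Any completion of $(X,D)$ is also a completion of $(X,0)$, so $\widehat{\operatorname{coreg}}(X,0)\le c$. This gives the second statement without the quotient argument.

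No step is a genuine obstacle. The only point needing care is the growth-degree bookkeeping: the degree must be the same for a group and its finite-index subgroups, and it must not increase under quotients. Both follow from the word-metric comparisons above.
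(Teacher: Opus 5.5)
Your main argument is correct and is essentially the paper's proof. Theorem~\ref{thm:nef} gives a finite-index $\mathbb Z^r$ with $r\le 2c$, word growth is comparable to $(1+R)^r$, and the surjection $\Gam(X,D)\twoheadrightarrow\pi_1(X_{\reg})$ handles the second group. Your alternative route needs a small fix, because Theorem~\ref{thm:coreg-all} requires $(X,0)$ to be klt, i.e.\ $K_X$ to be $\mathbb Q$-Cartier, and this does not follow from $(X,D)$ being klt; pass first to a small $\mathbb Q$-factorialization, which pulls the completion back crepantly and leaves $\pi_1(X_{\reg})$ unchanged by Lemma~\ref{lem-MMP-group}.
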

\begin{proof}
For a finite generating set, write $B_G(R)$ for the ball
of radius $R$ in $G$. A group with a finite-index subgroup
$\mathbb Z^r$ has word growth comparable to $(1+R)^r$:
passing to a finite-index subgroup changes word lengths by at
most multiplicative and additive constants, and balls in
$\mathbb Z^r$ have this growth. Apply Theorem~\ref{thm:nef}.
The obvious surjection
$\Gam(X,D)\twoheadrightarrow\pi_1(X_{\reg})$ gives that the latter
group is also virtually abelian of rank at most $2c$.
\end{proof}

The following observation identifies the rank of the ordinary fundamental group of the smooth locus. It does not identify the orbifold rank; compare Example~\ref{ex:rank-product}.
\begin{lemma}\label{lem:regular-rank}
Let $(X,D)$ be a projective klt pair. If $\pi_1(X_{\reg})$ is
virtually abelian of rank $r$, then $r=2\qtilde(X)$.
There is a connected finite quasi-\'etale cover $X_0\to X$ with
\[
 \pi_1((X_0)_{\reg})\xrightarrow{\ \simeq\ }\pi_1(X_0)
 \simeq\mathbb Z^r.
\]
Every connected finite quasi-\'etale cover of $X_0$ is \'etale.
\end{lemma}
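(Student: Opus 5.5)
We pass to a finite-index normal free abelian subgroup and take the corresponding cover. Let $G=\pi_1(X_{\reg})$ and choose a normal subgroup $N\trianglelefteq G$ of finite index with $N\simeq\mathbb Z^r$; this is possible by the remark on normal cores after Definition~\ref{def:virtually-abelian}. The subgroup $N$ determines a connected finite étale cover of $X_{\reg}$. By the Zariski–Riemann existence theorem this cover extends uniquely to a connected finite quasi-étale cover $X_1\to X$ with $(X_1)_{\reg}\supseteq$ the preimage of $X_{\reg}$. The complement of that preimage has codimension at least two, so $\pi_1((X_1)_{\reg})\simeq N\simeq\mathbb Z^r$. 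The pulled-back pair $(X_1,D_1)$ is again klt.

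Next we identify $\pi_1((X_1)_{\reg})$ with $\pi_1(X_1)$. The natural map $\pi_1((X_1)_{\reg})\to\pi_1(X_1)$ is surjective. I would show injectivity after one further finite cover. By \cite[Theorem~1]{Bra21}, the local regional fundamental groups of the klt pair at each point are finite. Their images in $\mathbb Z^r$ are therefore finite, and since $\mathbb Z^r$ is torsion-free, they are trivial. The kernel of $\pi_1(X_{1,\reg})\to\pi_1(X_1)$ is normally generated by these local images; this is the standard van Kampen argument on a stratification, using contractible local neighbourhoods of points. Hence the kernel is trivial, and we may take $X_0:=X_1$, with $\pi_1((X_0)_{\reg})\simeq\pi_1(X_0)\simeq\mathbb Z^r$.

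The final claim about covers follows quickly. A connected finite quasi-étale cover $X'\to X_0$ restricts to an étale cover over $(X_0)_{\reg}$. That cover is classified by a finite-index subgroup of $\pi_1((X_0)_{\reg})=\pi_1(X_0)$, so it comes from a finite étale cover $X''\to X_0$. This cover agrees with $X'$ over the regular locus, and both are normal and finite over $X_0$. By uniqueness of normalization they coincide, so $X'\to X_0$ is étale.

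For $r=2\qtilde(X)$, every connected finite quasi-étale cover of $X_0$ is étale and has fundamental group a finite-index subgroup $\mathbb Z^{r}$. As in the proof of Corollary~\ref{cor:irregularity}, take a resolution $V\to X'$. Since klt singularities are rational, $q(X')=q(V)$. Because $\pi_1(V)=\pi_1(X')$ for klt $X'$ \cite{Tak03}, we get $2q(X')=\rank H_1(V)/\mathrm{tors}=r$. Applying Corollary~\ref{cor:irregularity}'s argument to an arbitrary quasi-étale cover $X''$ of $X$ gives $2q(X'')\le\rank\pi_1(X''_{\reg})^{\mathrm{ab}}\le r$, since a finite-index subgroup of a virtually $\mathbb Z^r$ group has abelianization of rank at most $r$. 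Hence $2\qtilde(X)=r$, with equality attained on $X_0$.

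The step I expect to be hardest is the triviality of the kernel of $\pi_1((X_1)_{\reg})\to\pi_1(X_1)$. It relies on the local finiteness result of \cite{Bra21} together with a van Kampen argument. Injectivity could alternatively be obtained from the quasi-étale extension argument together with \cite{Tak03}.
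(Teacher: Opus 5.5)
Your proposal is correct and follows the paper's proof essentially step by step: you normalize the $\mathbb Z^r$-cover, kill the finite local groups of \cite{Bra21} inside the torsion-free group $\mathbb Z^r$, extend covers from the regular locus, and then use the Hodge-theoretic count with the upper bound over all quasi-\'etale covers. The only deviation is that you obtain $\pi_1(V)\simeq\mathbb Z^r$ from Takayama's theorem, whereas the paper notes that loops around exceptional divisors lie in images of finite local groups; both work, and the step is in fact formal for a resolution that is an isomorphism over $(X_0)_{\reg}$, since the surjection $\pi_1((X_0)_{\reg})\twoheadrightarrow\pi_1(V)$ composed with $\pi_1(V)\to\pi_1(X_0)$ is the isomorphism you already proved.
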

\begin{proof}
Choose a finite-index subgroup $\mathbb Z^r$ of $\pi_1(X_{\reg})$.
Its connected finite topological cover is algebraic. Normalize $X$ in its function field to
obtain a finite quasi-\'etale cover $X_0\to X$.

The preimage of $X_{\reg}$ is a smooth open subset of $X_0$
whose complement has codimension at least two. Thus
$\pi_1((X_0)_{\reg})=\mathbb Z^r$.
The pulled-back pair is moreover klt. By the local finiteness
theorem~\cite{Bra21}, its regional fundamental groups
are finite, hence have trivial image in $\mathbb Z^r$.
The kernel of the surjection
$\pi_1((X_0)_{\reg})\to\pi_1(X_0)$ is normally generated by
these local images, so this map is an isomorphism.

For a projective log resolution $V\to X_0$ that is an
isomorphism over $(X_0)_{\reg}$, the kernel of
$\pi_1((X_0)_{\reg})\to\pi_1(V)$ is normally generated by
loops around the exceptional divisors. Each loop lies
in the image of a regional fundamental group and is therefore
trivial. Consequently $\pi_1(V)=\mathbb Z^r$.
Rational singularities and Hodge theory now give
$r=b_1(V)=2q(V)=2q(X_0)$, as in the proof of
Corollary~\ref{cor:irregularity}.

For any connected finite quasi-\'etale cover $W\to X$,
$\pi_1(W_{\reg})$ is a finite-index subgroup of
$\pi_1(X_{\reg})$. Its surjection onto the first homology
modulo torsion of a resolution of $W$ gives $2q(W)\le r$.
Together with the equality for $X_0$, this proves
$r=2\qtilde(X)$.
Finally, the isomorphism
$\pi_1((X_0)_{\reg})\simeq\pi_1(X_0)$ implies that every
finite \'etale cover of $(X_0)_{\reg}$ extends to a finite \'etale
cover of $X_0$. The uniqueness of normalization finally shows that a
finite quasi-\'etale cover of $X_0$ is this \'etale extension.
\end{proof}

\begin{theorem}[Theorem~\ref{intro:nef-uniformization}]
\label{thm:nef-uniformization}
Let $(X,D)$ be a projective klt pair with $-(K_X+D)$ nef, and
put $q=\qtilde(X)$. Then $\pi_1(X_{\reg})$ is virtually
$\mathbb Z^{2q}$, and it is finite if and only if $q=0$.
There is a connected finite quasi-\'etale cover $p:X'\to X$
such that
\[
 \pi_1(X'_{\reg})\xrightarrow{\ \simeq\ }\pi_1(X')
 \simeq\mathbb Z^{2q},\qquad
 (\widetilde X',\widetilde D')\simeq
 \mathbb C^q\times Z\times(F,D_F).
\]
Here $\widetilde X'\to X'$ is the ordinary universal cover,
$\widetilde D'$ is the pullback of $D'=p^*D$, $Z$ is
projective and canonical with $K_Z\sim0$, and $(F,D_F)$ is
projective klt with $F$ rationally connected and
$-(K_F+D_F)$ nef. Moreover,
$\pi_1(Z_{\reg})=\pi_1(F_{\reg})=1$.
Either factor may be a point.
\end{theorem}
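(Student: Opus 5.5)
We first settle the group-theoretic part with the results already proved, and then obtain the geometric decomposition from the Matsumura--Wang structure theorem. By Theorem~\ref{thm:nef}, $\Gam(X,D)$ is virtually abelian. Corollary~\ref{cor:polynomial-growth} then shows that its quotient $\pi_1(X_{\reg})$ is virtually abelian of some rank $r$. Lemma~\ref{lem:regular-rank} identifies this rank as $r=2\qtilde(X)=2q$. In particular, $\pi_1(X_{\reg})$ is finite if and only if $q=0$. The same lemma provides a connected finite quasi-\'etale cover $X_0\to X$ with
\[
\pi_1((X_0)_{\reg})\simeq\pi_1(X_0)\simeq\mathbb Z^{2q},
\]
and every quasi-\'etale cover of $X_0$ is \'etale.

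Next we pull the pair back. Put $D_0=p_0^*D$. Since $X_0\to X$ is quasi-\'etale, we have $K_{X_0}+D_0=p_0^*(K_X+D)$. Hence $(X_0,D_0)$ is klt and $-(K_{X_0}+D_0)$ is nef. We now apply the structure theorem \cite[Theorem~1.1]{MW25} to $(X_0,D_0)$, possibly after one further finite quasi-\'etale cover $X'\to X_0$; that cover is then \'etale. The theorem should give an \'etale-locally trivial (in fact splitting after a finite cover) decomposition: the Albanese map $X'\to A$ is a fibre bundle, and the fibre $G$ decomposes on its universal cover as $Z\times(F,D_F)$. Here $Z$ is canonical with $K_Z\sim0$ and $\qtilde(Z)=0$, while $F$ is rationally connected with $-(K_F+D_F)$ nef. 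Because $X'\to X_0$ is \'etale, $\pi_1(X'_{\reg})\simeq\pi_1(X')$ is a finite-index subgroup of $\mathbb Z^{2q}$, hence again $\mathbb Z^{2q}$. Since $\dim A=q$ (irregularity is preserved up to the augmented value, which is already realized on $X_0$), the universal cover is $\mathbb C^q\times Z\times(F,D_F)$, provided the fibre factors are simply connected.

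The main obstacle is exactly this simple connectivity, namely $\pi_1(Z_{\reg})=\pi_1(F_{\reg})=1$, together with the claim that no finite group survives. For $F$: the pair $(F,D_F)$ is klt with nef anticanonical class and $F$ rationally connected. By Theorem~\ref{thm:nef}, $\pi_1(F_{\reg})$ is virtually abelian of rank $2\qtilde(F)$ by Lemma~\ref{lem:regular-rank}, and this is $0$ since rationally connected klt varieties, and their quasi-\'etale covers, have $q=0$. So $\pi_1(F_{\reg})$ is finite. For $Z$, \cite[Theorem~1.3]{FGSW26} with $\qtilde(Z)=0$ likewise gives finiteness of $\pi_1(Z_{\reg})$.

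To obtain triviality, we argue by maximality. A nontrivial finite quotient of $\pi_1(Z_{\reg})$ or of $\pi_1(F_{\reg})$ would produce a nontrivial quasi-\'etale cover of a fibre factor. We would try first to show that this cover is compatible with the product splitting, so that it induces a connected quasi-\'etale cover of $X'$ with the same $q$. Such a cover would have to be \'etale, but it is not, contradicting the choice of $X_0$ in Lemma~\ref{lem:regular-rank}. In the same way, one excludes finite \'etale covers of the factors by using $\pi_1(X')\simeq\mathbb Z^{2q}$ together with the fibration exact sequence for the Albanese bundle.

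If this compatibility argument fails, the alternative is to replace $X'$ by the cover corresponding to the product of the universal covers of $Z_{\reg}$ and $F_{\reg}$. These are finite by the previous step, so the resulting cover is finite and quasi-\'etale. We would then re-run the argument once, using that a finite cover of a finite cover is still finite.
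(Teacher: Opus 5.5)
Your first paragraph, the group-theoretic part, is exactly the paper's argument. The geometric part, however, has a genuine gap at the step you yourself flag as the main obstacle.

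\textbf{The finiteness step rests on a false claim.} You derive finiteness of $\pi_1(F_{\reg})$ from the assertion that rationally connected klt varieties and all their quasi-\'etale covers have $q=0$. This is false. Let $E$ be an elliptic curve and let $\iota(x,t)=(-x,-t)$ act on $E\times\mathbb P^1$. The involution has only the eight isolated fixed points $E[2]\times\{0,\infty\}$. Hence $S=(E\times\mathbb P^1)/\iota$ is a klt surface with eight $A_1$ points. It is rational, since it is ruled over $E/\pm1\simeq\mathbb P^1$. Moreover $-K_S$ is nef, and $E\times\mathbb P^1\to S$ is quasi-\'etale with $q=1$. So $\qtilde(S)=1$ and $\pi_1(S_{\reg})$ is infinite, although $\pi_1(S)=1$. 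Thus $\qtilde(F)=0$ for the fibre of the decomposition cannot be deduced from rational connectedness; it has to come from the global structure of $X'$.

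\textbf{The triviality step is only a plan.} Extending a quasi-\'etale cover of $F$ to $X'$ requires lifting the monodromy of the locally constant fibration, and you leave this unresolved. Your fallback, ``the cover corresponding to the product of the universal covers of $Z_{\reg}$ and $F_{\reg}$'', is not defined on $X'$, which is a product only after passing to its universal cover.

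\textbf{The missing idea is short.} Once $\pi_1(X'_{\reg})\xrightarrow{\simeq}\pi_1(X')$, the universal cover $\widetilde X'\to X'$ is \'etale. So its restriction over $X'_{\reg}$ is connected and simply connected, and it equals $(\widetilde X')_{\reg}=\mathbb C^q\times Z_{\reg}\times F_{\reg}$. This gives $\pi_1(Z_{\reg})=\pi_1(F_{\reg})=1$ at once, with no maximality argument and no prior knowledge of $\qtilde(F)$.

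To obtain the product description of $\widetilde X'$ in the first place, you only need $\pi_1(F)=1$ and $\pi_1(Z)=1$. The first holds because rationally connected varieties of klt type are simply connected, \cite[Theorem~2.2]{MW25}.

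For the second, note that \cite[Theorem~1.1]{MW25} does not hand you $Z$. It gives a locally constant MRC fibration over a klt $Y$ with $K_Y\equiv0$. The paper then proceeds as follows:
\begin{enumerate}
\item It applies \cite[Theorem~1.3]{FGSW26} to $Y$, obtaining a quasi-\'etale cover $A\times Z\to Y$ with $Z$ canonical, $K_Z\sim0$ and $\pi_1(Z_{\reg})=1$.
\item It base-changes along this cover and uses Lemma~\ref{lem:regular-rank} to see that $X'\to X_0$ is \'etale.
\item It reads off $\dim A=q$ from $\pi_1(X')\simeq\pi_1(A\times Z)$, given by the homotopy sequence.
\end{enumerate}
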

\begin{proof}
Virtual abelianity is obvious from theorem~\ref{thm:nef} and the surjection to
$\pi_1(X_{\reg})$. Apply Lemma~\ref{lem:regular-rank} to obtain $r=2q$ and $X_0\to X$.
The asserted finiteness criterion is clear as well.

Apply~\cite[Theorem~1.1]{MW25} to the pulled-back nef pair
on $X_0$. After a connected finite quasi-\'etale cover, its
MRC fibration is a locally constant fibration of pairs over
a projective klt variety $Y$ with $K_Y\num0$.
By~\cite[Theorem~1.3]{FGSW26}, there is a connected finite
quasi-\'etale cover $A\times Z\to Y$, where $A$ is abelian,+$Z$ is projective canonical, $K_Z\sim0$, and
$\pi_1(Z_{\reg})=1$.
Base change gives a connected finite quasi-\'etale cover
$X'\to X_0$ and a locally constant fibration of pairs
\[
 (X',D')\longrightarrow A\times Z
\]
with fiber $(F,D_F)$. These properties of the base change
follow from the local product description: the fibers are
connected and normal, and inverse images of the exceptional
codimension-two subset of the base still have codimension
at least two.
The pair $(F,D_F)$ is klt, $F$ is rationally connected,
and $-(K_F+D_F)$ is nef by restriction.

By Lemma~\ref{lem:regular-rank}, the composite cover
$X'\to X_0$ is \'etale. In particular,
$\pi_1(X'_{\reg})\to\pi_1(X')$ is an isomorphism and both
groups are $\mathbb Z^{2q}$.
A rationally connected projective variety of klt type is
simply connected; see~\cite[Theorem~2.2]{MW25}.
Thus $\pi_1(F)=1$. Also $\pi_1(Z)=1$, since
$\pi_1(Z_{\reg})\twoheadrightarrow\pi_1(Z)$.
The homotopy sequence of the locally trivial fibration
identifies $\pi_1(X')$ with $\pi_1(A\times Z)$, so
$\dim A=q$. Pulling back to the universal cover
$\mathbb C^q\times Z$ of the base trivializes the pair
fibration, and its simply connected total space is
\[
 (\widetilde X',\widetilde D')\simeq
 \mathbb C^q\times Z\times(F,D_F).
\]
Finally, the restriction of this universal cover to
$X'_{\reg}$ is connected and simply connected, because
$\pi_1(X'_{\reg})\to\pi_1(X')$ is an isomorphism.
Its total space is $\mathbb C^q\times Z_{\reg}\times F_{\reg}$.
Taking fundamental groups of this product gives
$\pi_1(F_{\reg})=1$.
\end{proof}

In particular, the polynomial-growth degree of
$\pi_1(X_{\reg})$ is exactly $2\qtilde(X)$, whereas the
degree for $\Gam(X,D)$ can be larger.
By the same arguments and Theorem~\ref{thm:coreg-all}, the rank
formula and the finiteness criterion for $\pi_1(X_{\reg})$
also hold for every projective klt pair of finite absolute
coregularity, without assuming nefness.

\section{Effective Bounds}\label{sec:eff-virt-solv}
In this section, we prove the effective result for small absolute
coregularities. We note that no upper bound on the order of the finite group
$\Gam(F',R_{Z'}|_{F'})$ of a general fibre in the proof of
Theorem~\ref{thm-induction-coreg} is required.
Instead, Lemma~\ref{lem-Fano-extraction} supplies a uniform
bound on the index of an abelian subgroup of that group.

\begin{proposition}\label{prop-induction-solvable}
Fix an integer $c_0\ge0$. Suppose that for every integer $1\le k\le c_0$
there is a positive integer $I_k$ such that the regional orbifold
fundamental group $\Gam(V,\Delta)$
of every projective klt Calabi-Yau pair $(V,\Delta)$ of dimension $k$
with standard coefficients
has a solvable subgroup of derived length at most $2k-1$ and index
at most $I_k$, independently of the pair.
Then for every integer $d\ge1$ there is a positive integer
$B(d,c_0)$, depending only on $d,c_0$ and the constants $I_1,\ldots,I_{c_0}$,
such that for every projective klt pair $(X,D)$ of dimension $d$
and absolute coregularity at most $c_0$, the group $\Gam(X,D)$ has
a solvable subgroup of derived length at most $2d-1$ and index
at most $B(d,c_0)$.
\end{proposition}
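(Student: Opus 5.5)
We follow the proof of Theorem~\ref{thm-induction-coreg} step by step, keeping track of indices and derived lengths instead of ranks. The argument is an induction on $d=\dim X$, run simultaneously over all absolute coregularities $a\le c_0$. The aim is a recursively defined bound $B(d,c_0)$ with
\[
 B(d,c_0)=\max\bigl(I_1,\ldots,I_{\min(d,c_0)},\,J(d),\,\max_{d'<d}B(d',c_0)\cdot(J(d)^2)!\bigr),
\]
where in dimension $d=1$ we may take $B(1,c_0)=\max(I_1,60)$ by Proposition~\ref{prop-dim1}. We first fix a completion of coregularity $a\le c_0$, pass to a small $\mathbb Q$-factorialization, and set $P=D_{\st}$; none of this changes $\Gam$.

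\emph{Case 1.} If $K_X+P$ is pseudo-effective, Lemma~\ref{lem:cy-coreg} shows that $(X,P)$ is a klt Calabi-Yau pair with standard coefficients and $d=a\le c_0$. The hypothesis then gives a solvable subgroup of derived length at most $2d-1$ and index at most $I_d$.

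\emph{Case 2.} Otherwise we run the MMP to a Mori fibre space $f:Z\to W$. If $W$ is a point, Theorem~\ref{thm:fano} gives an abelian subgroup of index at most $J(d)$. Its image in the quotient $\Gam(X,D)$ is again abelian, of index at most $J(d)$, and derived length $1\le 2d-1$.

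If $\dim W=d'\ge1$, we repeat the construction of $Z'\to W'$ and of the boundaries $R_{Z'}$ and $R_{W'}$. The pair $(W',R_{W'})$ is klt, of dimension $d'<d$ and absolute coregularity at most $a\le c_0$. By induction, $\Gam(W',R_{W'})$ has a solvable subgroup of derived length at most $2d'-1$ and index at most $B(d',c_0)$. The fibre group $\Gam(F',R_{Z'}|_{F'})$ is finite, and by Lemma~\ref{lem-Fano-extraction} it has an abelian subgroup of index at most $J(d-d')\le J(d)$ (taking $J$ monotone, replacing $J(r)$ by $\max_{r'\le r}J(r')$). Lemma~\ref{lem-virt-sol}, applied to the exact sequence of Lemma~\ref{lem-fibration}, gives a solvable subgroup of $\Gam(U,R_{Z'}|_U)$ with derived length at most $2d'-1+2\le 2d-1$, since $d'\le d-1$. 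Its index is at most $B(d',c_0)\,(J(d)^2)!$. Pushing forward along the surjections $\Gam(U,R_{Z'}|_U)\twoheadrightarrow\Gam(X,D)$ does not increase index or derived length, because images of solvable subgroups of bounded index have the same property.

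\emph{Main obstacle.} The key point is that every bound must be independent of the pair. In particular, the integers $q_i$ chosen in Lemma~\ref{lem-dlt-group} are arbitrary and depend on $X$, and the order of the finite fibre group is not controlled. Neither enters the estimate: Lemma~\ref{lem-Fano-extraction} bounds only the index of an abelian subgroup, with $J$ depending only on the dimension, and Lemma~\ref{lem-virt-sol} needs only that index. We will also check that the dimension-one case and the case $d'=d-1$ fit the derived-length count: for $d'=d-1$ the count gives exactly $2(d-1)-1+2=2d-1$, and the base case $d=1$ reduces to Proposition~\ref{prop-dim1}, where the groups are virtually abelian with index at most $60$.
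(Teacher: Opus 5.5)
Your proposal is correct and follows the paper's proof essentially step for step. It uses the same three-way split: the pseudo-effective Calabi--Yau case handled by the hypothesis with $k=d$, the log Fano case handled by $J(d)$, and the Mori fibre space over a positive-dimensional base handled by induction, Lemma~\ref{lem-Fano-extraction} and Lemma~\ref{lem-virt-sol}, with the same derived-length count $2d'+1\le 2d-1$. The remaining differences are cosmetic. You bound the fibre contribution by $(J(d)^2)!$ after making $J$ monotone, rather than by $(J(d-w)^2)!$, which is equally uniform. Your base-case constant should be read as $60$ when $c_0=0$, since $I_1$ is then not given.
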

\begin{proof}
Repeat the induction and constructions from the proof of
Theorem~\ref{thm-induction-coreg}, writing $P=D_{\st}$ on a small
$\mathbb Q$-factorialization of $X$.
If $K_X+P$ is pseudo-effective, Lemma~\ref{lem:cy-coreg} shows
that $(X,P)$ is Calabi-Yau and $d$ equals its absolute coregularity,
so $d\le c_0$.
The hypothesis of this proposition, with $k=d$, gives the required
solvable subgroup of $\Gam(X,P)=\Gam(X,D)$ with index at most $I_d$.
If the $(K_X+P)$-MMP ends in a Mori fibre space $Z\to W$ with
$W$ a point, $(Z,P_Z)$ is log Fano. Theorem~\ref{thm:fano} shows
that $\Gam(Z,P_Z)$ is finite and has an abelian subgroup of index
at most $J(d)$. Its image in $\Gam(X,D)$ has the same index bound.
For a base of dimension $1\le w<d$, the proof of
Theorem~\ref{thm-induction-coreg} constructs the exact sequence
\[
 \Gam(F',R_{Z'}|_{F'})\longrightarrow\Gam(U,R_{Z'}|_U)
 \longrightarrow\Gam(W',R_{W'})\longrightarrow1,
\]
where $U\subset Z'$ is the indicated open set, $F'$ is a general
fibre of $Z'\to W'$, and $\Gam(U,R_{Z'}|_U)$ surjects onto $\Gam(X,D)$.
The group $\Gam(F',R_{Z'}|_{F'})$ is finite and has an abelian
subgroup of index at most $J(d-w)$, by Lemma~\ref{lem-Fano-extraction}
and the comparison with $F_0$ in the proof of Theorem~\ref{thm-induction-coreg}.
By induction, $\Gam(W',R_{W'})$ has a solvable subgroup of derived
length at most $2w-1$ and index at most $B(w,c_0)$.
Lemma~\ref{lem-virt-sol} therefore gives a solvable subgroup of
$\Gam(U,R_{Z'}|_U)$ of derived length at most $2w+1\le2d-1$
and index at most $B(w,c_0)(J(d-w)^2)!$.
For any surjection $\rho:G\to Q$ and subgroup $S\le G$, the
image $\rho(S)$ has derived length at most that of $S$ and
$[Q:\rho(S)]\le[G:S]$. Thus the same bounds hold for the image
subgroup in $\Gam(X,D)$.
These estimates define $B(d,c_0)$ recursively. For $d=1$,
Proposition~\ref{prop-dim1} gives an abelian subgroup of index at
most $60$ for every curve pair under consideration. A connected
zero-dimensional normal variety is a point and has trivial fundamental group.
\end{proof}

\begin{proof}[Proof of Theorem~\ref{introthm-solv012}]
For projective klt Calabi-Yau pairs with standard coefficients,
Propositions~\ref{prop-dim1} and \ref{prop-dim2} give abelian
subgroups of their regional orbifold fundamental groups of indices
at most $60$ in dimension one and $3840$ in dimension two.
These verify the hypotheses of Proposition~\ref{prop-induction-solvable}
in all dimensions $1\le k\le2$; no higher-dimensional Calabi-Yau
index bound is needed when $c_0=2$.
Apply Proposition~\ref{prop-induction-solvable} with $c_0=2$.
For an explicit recursive choice of bounds, enlarge $J$ to be
integer-valued and nondecreasing, put $b_0=1$, and set
\[
 b_d=\max\left\{3840,\,J(d),\,
       \max_{1\le w<d} b_w\bigl(J(d-w)^2\bigr)!\right\}
 \quad(d\ge1),
\]
where the inner maximum is omitted for $d=1$.
The induction gives index at most $b_d$ for a solvable subgroup.
Its normal core has the same derived-length bound and index at
most $b_d!$. Thus one may take $i(c,d)=b_d!$ for $c\le2$.
\end{proof}

\section{Torus Covers}\label{sec:new-results}
\begin{theorem}[Theorem~\ref{introthm-torus-covers}]\label{thm:torus}
    Let $(X,D)$ be a projective klt pair with $K_X+D \sim_{\mathbb{Q}} 0$. Then there exists a connected finite quasi-\'etale cover $\nu:Y \to X$, such that $Y= A \times Z$ has a product structure with the following properties:
    \begin{enumerate}
        \item $A$ is an Abelian variety of dimension $\tilde{q}(X)$.
        \item There is a boundary $D_Z$ on $Z$ such that $(Z,D_Z)$ is klt and $K_Z+D_Z\sim_{\mathbb{Q}} 0$. Moreover, $\tilde{q}(Z)=0$ and $\nu^*D=\operatorname{pr}_Z^*D_Z$.
    \end{enumerate}
\end{theorem}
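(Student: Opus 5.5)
We first pass to a quasi-\'etale cover realizing the augmented irregularity, then split off the Albanese torus using Ambrosio's product theorem \cite[Theorem~4.8]{Amb05}.

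\emph{Step 1: realizing $\qtilde(X)$.} Corollary~\ref{cor:irregularity} together with Lemma~\ref{lem:cy-coreg} gives $\qtilde(X)\le\widehat{\operatorname{coreg}}(X,D)=\dim X<\infty$. The supremum defining $\qtilde(X)$ is therefore attained. Choose a connected finite quasi-\'etale cover $X_1\to X$ with $q(X_1)=\qtilde(X)$. Let $D_1$ be the codimension-one pullback of $D$. Since the cover is quasi-\'etale, $K_{X_1}+D_1$ is the pullback of $K_X+D$. Hence $K_{X_1}+D_1\sim_{\mathbb Q}0$, and $(X_1,D_1)$ is klt by \cite[Proposition~5.20]{KM98}. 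Moreover $\qtilde(X_1)=\qtilde(X)$, because every quasi-\'etale cover of $X_1$ is also one of $X$.

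\emph{Step 2: the Albanese map.} Since $X_1$ has rational singularities, its Albanese variety $A_1$ has dimension $q(X_1)$. The Albanese map $\alpha:X_1\to A_1$ is surjective with connected fibres; this is Kawamata's theorem for $\kappa=0$, applied to a resolution, using that $(X_1,D_1)$ is klt Calabi-Yau. Now apply \cite[Theorem~4.8]{Amb05}: for a klt pair with $K+D\sim_{\mathbb Q}0$, the Albanese fibration becomes a product after an \'etale base change. Concretely, there is an isogeny $A\to A_1$ such that $X_1\times_{A_1}A\simeq A\times Z$, and under this isomorphism the pullback of $D_1$ equals $\operatorname{pr}_Z^*D_Z$ for a boundary $D_Z$ on the fibre $Z$. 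The base change $Y=X_1\times_{A_1}A\to X_1$ is \'etale, so $Y\to X$ is quasi-\'etale. The isogeny can be chosen so that $Y$ is connected, which follows from connectedness of the Albanese fibres. Then $\dim A=\qtilde(X)$. The pair $(Z,D_Z)$ is klt because $(A\times Z,\operatorname{pr}_Z^*D_Z)$ is klt and $A$ is smooth. Restricting $K_Y+\nu^*D\sim_{\mathbb Q}0$ to a fibre $\{a\}\times Z$ gives $K_Z+D_Z\sim_{\mathbb Q}0$.

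\emph{Step 3: $\qtilde(Z)=0$.} Suppose some quasi-\'etale cover $Z'\to Z$ had $q(Z')>0$. Then $A\times Z'\to Y\to X$ is quasi-\'etale, and by the K\"unneth formula for $h^1(\mathcal O)$,
\[
q(A\times Z')=\dim A+q(Z')>\qtilde(X),
\]
a contradiction. Hence $\qtilde(Z)=0$.

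The main obstacle is Step~2: checking that the hypotheses of \cite[Theorem~4.8]{Amb05} hold in this singular klt pair setting, and that the splitting respects the boundary. If Ambro's theorem is stated only for the underlying variety, we would obtain the boundary statement separately. The idea is that the components of $D$ are horizontal over $A$ and that translations of $A$ act on $Y$. Alternatively, the isotriviality of the boundary would follow from the moduli part being torsion over an abelian base.
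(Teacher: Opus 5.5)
Your proposal is correct and is essentially the paper's proof: pass to a quasi-\'etale cover attaining $\qtilde(X)$, split the pair with \cite[Theorem~4.8]{Amb05}, and get $\qtilde(Z)=0$ from maximality via $q(A\times Z')=\dim A+q(Z')$. The differences are minor. First, the paper bounds $q(X')\le\dim X$ directly from the surjectivity of the Albanese map in Ambro's theorem, instead of going through Corollary~\ref{cor:irregularity}, which rests on the whole coregularity machinery and \cite{FGSW26}. Second, your appeal to Kawamata's $\kappa=0$ theorem on a resolution does not apply when $D\neq0$, since the resolution may have $\kappa=-\infty$ (e.g.\ $\mathbb P^1$ with four half-points); this is harmless because Ambro's theorem already gives surjectivity and connected fibres.
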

\begin{proof}
Every connected finite quasi-\'etale cover $X'\to X$, with the
pulled-back boundary, is again klt Calabi-Yau. By
\cite[Theorem~4.8]{Amb05}, its Albanese map is surjective, so
$q(X')\le\dim X$. The supremum defining $\qtilde(X)$ is
therefore a maximum. Choose $X'\to X$ with $q(X')=\qtilde(X)$.
Ambro's theorem also provides a finite \'etale base change
$A\to\operatorname{Alb}(X')$ after which the pair splits as
\[
 (Y,D_Y)\simeq(A\times Z,\operatorname{pr}_Z^*D_Z).
\]
The composite $Y\to X$ is finite quasi-\'etale, and
$\dim A=q(X')=\qtilde(X)$. The pair $(Z,D_Z)$, obtained by
restricting $(Y,D_Y)$ to a fibre of the projection $Y\to A$,
is klt Calabi-Yau. 
If $Z$ had a finite quasi-\'etale cover $Z'$ with $q(Z')>0$,
then $A\times Z'\to X$ would be a finite quasi-\'etale cover
with irregularity larger than $\qtilde(X)$, a contradiction.
Thus $\qtilde(Z)=0$.
\end{proof}

\begin{remark}\label{rem:terminalization}
For boundary-free klt Calabi-Yau varieties, the criterion
$\qtilde(X)=0$ is equivalent to finiteness of $\pi_1(X_{\reg})$
in every dimension, by~\cite{FGSW26}. 
For a terminal threefold with $K_X\sim0$ Cartier, one also has
the following alternative:
$X$ is smooth, or is a finite quasi-\'etale quotient of an abelian
threefold, or has finite regional fundamental group. Indeed,
$\qtilde(X)=0$ gives the last case, and $\qtilde(X)=3$ gives the
abelian cover. In the remaining case, the torus cover has a terminal
factor of dimension at most two, hence is smooth. Thus $X$ has
quotient singularities. Since $K_X$ is Cartier, a local quotient
presentation may be chosen with a finite group $G\subset\mathrm{SL}_3(\mathbb C)$
and no nontrivial quasi-reflections.
For an element $g$ of order $r$ with eigenvalues
$\exp(2\pi i a_j/r)$, $0\le a_j<r$, define
$\operatorname{age}(g)=\sum_{j=1}^3a_j/r$.
The Reid-Tai criterion says that a quotient without quasi-reflections
is terminal if and only if $\operatorname{age}(g)>1$ for every
nonidentity element $g$; see~\cite[Theorem~4.11]{Rei87}.
For $g\in\mathrm{SL}_3$, both $\operatorname{age}(g)$ and
$\operatorname{age}(g^{-1})$ are integers, and
\[
 \operatorname{age}(g)+\operatorname{age}(g^{-1})
 =\#\{j:a_j\ne0\}\le3.
\]
Terminality would make both integers at least two, so their sum would
be at least four, contradicting the displayed bound.
Consequently $G$ is trivial, and a Gorenstein terminal quotient threefold is smooth.
\end{remark}


\begin{thebibliography}{BCHM10}
\raggedright
\bibitem[Amb05]{Amb05} F.~Ambro,
\emph{The moduli b-divisor of an lc-trivial fibration},
Compos. Math. \textbf{141} (2005), 385-403.
\href{https://arxiv.org/abs/math/0308143}{arXiv:math/0308143}.

\bibitem[BCHM10]{BCHM10} C.~Birkar, P.~Cascini, C.~D.~Hacon, and J.~McKernan,
\emph{Existence of minimal models for varieties of log general type},
J. Amer. Math. Soc. \textbf{23} (2010), 405-468.

\bibitem[BF24]{BF24} L.~Braun and F.~Figueroa,
\emph{Fundamental Groups, Coregularity, and Low Dimensional Klt
Calabi Yau Pairs}, preprint, version 2, 21 May 2024.
\href{https://arxiv.org/abs/2401.01315v2}{arXiv:2401.01315v2}.

\bibitem[BFMS22]{BFMS20} L.~Braun, S.~Filipazzi, J.~Moraga, and R.~Svaldi,
\emph{The Jordan property for local fundamental groups},
Geom. Topol. \textbf{26} (2022), 283-319.
\href{https://arxiv.org/abs/2006.01253v2}{arXiv:2006.01253v2}.

\bibitem[Bra21]{Bra21} L.~Braun,
\emph{The local fundamental group of a Kawamata log terminal singularity
is finite}, Invent. Math. \textbf{226} (2021), 845-896.
\href{https://arxiv.org/abs/2004.00522}{arXiv:2004.00522}.




\bibitem[CD89]{CD89} A.~Chermak and A.~Delgado,
\emph{A measuring argument for finite groups},
Proc. Amer. Math. Soc. \textbf{107} (1989), 907-914.

\bibitem[CGG24]{CGG24} B.~Claudon, P.~Graf, and H.~Guenancia,
\emph{Equality in the Miyaoka-Yau inequality and uniformization of
non-positively curved klt pairs}, C. R. Math. Acad. Sci. Paris
\textbf{362} (2024), no.~S1, 55-81.
\href{https://doi.org/10.5802/crmath.599}{doi:10.5802/crmath.599}.

\bibitem[FGSW26a]{FGSW26nef} X.~Fu, B.~Guo, J.~Song, and J.~Wang,
\emph{Fundamental groups of compact K\"ahler varieties with nef
anticanonical bundle}, preprint, 7 February 2026.
\href{https://arxiv.org/abs/2602.07420v1}{arXiv:2602.07420v1}.

\bibitem[FGSW26b]{FGSW26} X.~Fu, B.~Guo, J.~Song, and J.~Wang,
\emph{Fundamental groups of Calabi-Yau spaces}, preprint, 7 October 2026.
\href{https://arxiv.org/abs/2610.09269v1}{arXiv:2610.09269v1}.

\bibitem[FMP25]{FMP22} F.~Figueroa, J.~Moraga, and J.~Peng,
\emph{Log canonical thresholds and coregularity}, Doc. Math.
\textbf{30} (2025), 183-218.
\href{https://arxiv.org/abs/2204.05408}{arXiv:2204.05408}.

\bibitem[FS23]{FS23} S.~Filipazzi and R.~Svaldi,
\emph{On the connectedness principle and dual complexes for generalized pairs},
Forum Math. Sigma \textbf{11} (2023), Paper No.~e33.
\href{https://arxiv.org/abs/2010.08018}{arXiv:2010.08018}.

\bibitem[Fil20]{Fil20} S.~Filipazzi,
\emph{On a generalized canonical bundle formula and generalized adjunction},
Ann. Sc. Norm. Super. Pisa Cl. Sci. (5) \textbf{21} (2020), 1187-1221.
\href{https://arxiv.org/abs/1807.04847}{arXiv:1807.04847}.

\bibitem[GLM25]{GLM23} C.~Gachet, Z.~Liu, and J.~Moraga,
\emph{Fundamental groups of log Calabi-Yau surfaces}, preprint,
revised 11 February 2025.
\href{https://arxiv.org/abs/2312.03981v2}{arXiv:2312.03981v2}.

\bibitem[Gon13]{Gon13} Y.~Gongyo,
\emph{Abundance theorem for numerically trivial log canonical divisors
of semi-log canonical pairs}, J. Algebraic Geom. \textbf{22} (2013),
549-564. \href{https://arxiv.org/abs/1005.2796}{arXiv:1005.2796}.

\bibitem[GSS26]{GSS26} B.~Guo, J.~Song, and J.~Sturm,
\emph{Ricci entropy, RCD structures and K\"ahler spaces},
preprint, 24 September 2026.
\href{https://arxiv.org/abs/2609.28986v1}{arXiv:2609.28986v1}.

\bibitem[HL21]{HL21} J.~Han and W.~Liu,
\emph{On a generalized canonical bundle formula for generically finite morphisms},
Ann. Inst. Fourier \textbf{71} (2021), 2047-2077.
\href{https://doi.org/10.5802/aif.3437}{doi:10.5802/aif.3437}.

\bibitem[Huy05]{Huy05} D.~Huybrechts,
\emph{Complex Geometry: An Introduction}, Universitext,
Springer-Verlag, Berlin, 2005.
\href{https://doi.org/10.1007/b137952}{doi:10.1007/b137952}.


\bibitem[KM98]{KM98} J.~Koll\'ar and S.~Mori,
\emph{Birational Geometry of Algebraic Varieties}, Cambridge Tracts
in Mathematics, vol.~134, Cambridge University Press, 1998.

\bibitem[Mor21]{Mor21a} J.~Moraga,
\emph{On a toroidalization for klt singularities}, preprint (2021).
\href{https://arxiv.org/abs/2106.15019v3}{arXiv:2106.15019v3}.

\bibitem[Mor24]{Mor22} J.~Moraga,
\emph{Coregularity of Fano varieties}, Geom. Dedicata
\textbf{218} (2024), Paper No.~40.
\href{https://arxiv.org/abs/2206.10834}{arXiv:2206.10834}.

\bibitem[MW25]{MW25} S.~Matsumura and J.~Wang,
\emph{Structure theorem for projective klt pairs with nef
anti-canonical divisor}, J. Eur. Math. Soc. (2025),
published online first.
\href{https://doi.org/10.4171/JEMS/1702}{doi:10.4171/JEMS/1702}.

\bibitem[MWWZ26]{MWWZ26} S.~Matsumura, J.~Wang, X.~Wu, and Q.~Zhang,
\emph{Beauville--Bogomolov--Yau decomposition for K\"ahler
generalized pairs}, preprint, revised 9 September 2026.
\href{https://arxiv.org/abs/2506.23218v2}{arXiv:2506.23218v2}.

\bibitem[Nor83]{Nor83} M.~V.~Nori,
\emph{Zariski's conjecture and related problems}, Ann. Sci. \'Ecole Norm.
Sup. (4) \textbf{16} (1983), 305-344.

\bibitem[Pau97]{Pau97} M.~P\u{a}un,
\emph{Sur le groupe fondamental des vari\'et\'es k\"ahl\'eriennes compactes
\`a classe de Ricci num\'eriquement effective},
C. R. Acad. Sci. Paris S\'er. I Math. \textbf{324} (1997), 1249-1254.

\bibitem[Rei87]{Rei87} M.~Reid,
\emph{Young person's guide to canonical singularities},
in Algebraic Geometry, Bowdoin, 1985, Proc. Sympos. Pure Math.
\textbf{46}, Part~1, Amer. Math. Soc., 1987, 345-414.

\bibitem[Wan22]{Wan22} J.~Wang,
\emph{Structure of projective varieties with nef anticanonical
divisor: the case of log terminal singularities},
Math. Ann. \textbf{384} (2022), 47-100.
\href{https://doi.org/10.1007/s00208-021-02275-7}{doi:10.1007/s00208-021-02275-7}.

\end{thebibliography}
\end{document}